\documentclass{ctexart}
\usepackage{}
\usepackage{geometry}
\geometry{left=2.2cm, right=2.2cm, top=2.2cm, bottom=2.3cm}

\usepackage{amsmath}
\usepackage{amssymb}
\usepackage{amsthm}
\usepackage{graphicx}
\usepackage{authblk}
\usepackage{color}
\usepackage{setspace}

\usepackage{paralist}


\usepackage{ulem}

\allowdisplaybreaks[4]


\pagestyle{plain}
\makeatletter
\@addtoreset{equation}{section}
\makeatother

\newtheorem{lemma}{Lemma}[section]
\newtheorem{proposition}{Proposition}[section]
\makeatletter
\renewcommand{\section}{\@startsection{section}{1}{0mm}
 {0.7\baselineskip}{0.5\baselineskip}{\Large\bf\leftline}}  
\makeatother

\newtheorem{theorem}{Theorem}[section]

\theoremstyle{definition}

\newtheorem{remark}{Remark}

\normalem





\usepackage[colorlinks=true]{hyperref}
\hypersetup{
  linkcolor=cyan,    
  citecolor=green,
  filecolor=blue,}


\begin{document}
\title{\bf Asymptotic stability of the superposition of viscous contact wave with rarefaction waves for the compressible Navier-Stokes-Maxwell equations} \vskip 0.5cm

\author{Huancheng Yao, Changjiang Zhu\thanks{Corresponding author.
\authorcr E-mail addresses: mayaohch@mail.scut.edu.cn (Yao), machjzhu@scut.edu.cn (Zhu).}}
\affil{ \normalsize  School of Mathematics, South China University of Technology, \authorcr Guangzhou 510641, P.R. China }

\date{}
\maketitle

\begin{abstract}
We study the large-time asymptotic behavior of solutions toward the combination of a viscous contact wave with two rarefaction waves for the compressible non-isentropic Navier-Stokes equations coupling with the Maxwell equations through the Lorentz force (called the Navier-Stokes-Maxwell equations).
It includes the electrodynamic effects into the dissipative structure of the hyperbolic-parabolic system and turns out to be more complicated than that in the simpler compressible Navier-Stokes equations.
Based on a new observation of the specific structure of the Maxwell equations in the Lagrangian coordinates,
we prove that this typical composite wave pattern is time-asymptotically stable for the Navier-Stokes-Maxwell equations under some smallness conditions on the initial perturbations and wave strength,
and also under the assumption that the dielectric constant is bounded.
The main result is proved by using elementary energy methods.
This is the first result about the nonlinear stability of the combination of two different wave patterns for the compressible Navier-Stokes-Maxwell equations.

\vspace{3mm}

{\bf 2021 Mathematics Subject Classification:} 35Q30, 76N06, 76N30, 35Q61.

{\bf Keywords:} compressible non-isentropic Navier-Stokes-Maxwell equations, composite wave, asymptotic behavior, electromagnetic field, dielectric constant.

\end{abstract}

\begin{spacing}{1.00}
\tableofcontents
\end{spacing}

\section{Introduction and main results}
\subsection{The problem}

Plasma dynamics is a field of studying flow problems of electrically conducting fluids.
The research objects of plasma dynamics are very broad.
A complete analysis in this field consists of the study of the gasdynamic field, the electromagnetic field and the radiation field simultaneously (see \cite{Pai1962}).
In the present paper, we consider the flow of an electrically conducting fluid under the influence of the electromagnetic field.
At the macroscopic level, the motion of this electrically conducting fluid such as the movement in the electromagnetic field generated by itself is described by hydrodynamics equations,
for example the compressible Navier-Stokes equations.
Because the dynamic motion of the fluid and the electromagnetic field interact tightly on each other,
the governing system in the non-isentropic case is derived from fluid mechanics with appropriate modifications to account for the electromagnetic effects,
which is composed of the laws of conservation of mass, momentum and energy, Maxwell's law, and the law of conservation of electric charge (see \cite{Imai}, \cite{Kawashima1}, \cite{xux}):
\begin{small}
\begin{equation}\label{chushifangcheng}
\left\{
\begin{aligned}
& \partial_{t} \rho+\operatorname{div}(\rho \boldsymbol{u})=0,   \\[2mm]
& \rho \left( \partial_{t}\boldsymbol{u} + \boldsymbol{u} \cdot \nabla \boldsymbol{u} \right)   +   \nabla p  =  \mu' \Delta\boldsymbol{u} + (\lambda+\mu') \nabla(\operatorname{div} \boldsymbol{u}) + \rho_e \boldsymbol{E} + \boldsymbol{J} \times \boldsymbol{B},  \\[2mm]
& \rho \frac{\partial e}{\partial \theta} (\partial_t \theta + \boldsymbol{u} \cdot \nabla \theta) + \theta \frac{\partial p}{\partial \theta} \operatorname{div}\boldsymbol{u} = \operatorname{div}(\kappa \nabla \theta)  +  \mathbb{N}(\boldsymbol{u})  +  (\boldsymbol{J}  -  \rho_e \boldsymbol{u}) \cdot (\boldsymbol{E} + \boldsymbol{u}\times\boldsymbol{B}),  \\[2mm]
& \varepsilon \partial_{t} \boldsymbol{E}  -  \frac{1}{\mu_0}\operatorname{curl}\boldsymbol{B}+\boldsymbol{J}=0, \\[2mm]
& \partial_{t} \boldsymbol{B}  +  \operatorname{curl} \boldsymbol{E}=0,   \\[2mm]
&\partial_t \rho_e + \operatorname{div}\boldsymbol{J} = 0, \quad \varepsilon \operatorname{div}\boldsymbol{E} = \rho_e, \quad \operatorname{div}\boldsymbol{B} = 0,
\end{aligned}\right.
\end{equation}
\end{small}
where $(\boldsymbol{x}, t)\in\mathbb{R}^{3}\times\mathbb{R}_{+}$. Here, $\rho(\boldsymbol{x},t)>0$ denotes the mass density;
$\boldsymbol{u}=\left(u_{1}, u_{2}, u_{3}\right)\in\mathbb{R}^{3}$ is the fluid velocity;
$ \theta(\boldsymbol{x},t) > 0 $ is the absolute temperature; $\boldsymbol{E}=\left(E_{1}, E_{2}, E_{3}\right)\in\mathbb{R}^{3}$ and $\boldsymbol{B}=\left(B_{1}, B_{2}, B_{3}\right)\in\mathbb{R}^{3}$ denote the electric field and the magnetic field, respectively.
$ \rho_e (\boldsymbol{x},t) $ is the electric charge density.
The pressure $ p $ and the internal energy $ e $ are expressed by the equations of states.
For the sake of simplicity, we will focus on only polytropic fluids throughout this paper, namely
\begin{equation}\label{p(theta)}
p = R \rho \theta,\qquad e = \frac{R}{\gamma-1} \theta.
\end{equation}
$ \mathbb{N}(\boldsymbol{u}) $ in $ \eqref{chushifangcheng}_3 $ denotes the viscous dissipation function
\begin{equation*}
\mathbb{N}(\boldsymbol{u}) = \sum_{i,j=1}^3 \frac{\mu'}{2} \left( \frac{\partial u_i}{\partial x_j} + \frac{\partial u_j}{\partial x_i} \right)^2 + \lambda \left( \operatorname{div}\boldsymbol{u} \right)^2 .
\end{equation*}
What's more, the electric current density $\boldsymbol{J}$ can be
expressed by Ohm's law
$$\boldsymbol{J} = \rho_e \boldsymbol{u} + \sigma(\boldsymbol E + \boldsymbol u \times \boldsymbol B).$$

Next, let's discuss the parameters in the above equations.
First,
$ R >0 $ is the gas constant and $ \gamma>1 $ is the adiabatic exponent.
$\mu'$ and $\lambda$ in $\eqref{chushifangcheng}_2$ and $ \mathbb{N}(\boldsymbol{u}) $ are the viscosity coefficients of the fluid which satisfy $\mu'>0$ and $2\mu'+3\lambda >0$.
In Ohm's law, $\sigma>0$ denotes the electic conductivity coefficient.
Moreover,
the heat conductivity coefficient $\kappa$ in $\eqref{chushifangcheng}_3$ and the magnetic permeability $\mu_0$ in $\eqref{chushifangcheng}_4$ are assumed to be positive constants.
Finally, $\varepsilon>0$ in $\eqref{chushifangcheng}_4$ is the dielectric constant.

There is quite limited mathematical progress for the original nonlinear system since as pointed out by Kawashima in \cite{Kawashima1}, the system $\eqref{chushifangcheng}$ is neither symmetric hyperbolic nor strictly hyperbolic.
For this reason the classical local well-posedness theorem (cf. \cite{Kato}) cannot be directly applied to the system $\eqref{chushifangcheng}$.
Moreover, the hydrodynamic and electrodynamic effects are strongly coupled.
The presence of strong nonlinearities and the lack of dissipation mechanism of the magnetic field also lead to many difficulties.
Due to the mathematically complicate structure of the original nonlinear system $\eqref{chushifangcheng}$,
some simplified models are derived according to the actual physical application.
As it was pointed out by Imai in \cite{Imai}, the assumption that the electric charge density $\rho_{e} \simeq 0$ is physically very good for the study of plasmas.
Here, it is essential to recognize that the quasi-neutrality assumption $\rho_{e} \simeq 0$ is different from the assumption of exact neutrality $\rho_{e} = 0$ since the latter would lead to the superfluous condition $\operatorname{div}\boldsymbol{E}=0 $.
According to this quasi-neutrality assumption, we can eliminate the terms involving $\rho_e$ in the system $ \eqref{chushifangcheng} $ and derive the following simplified system (cf. \cite{xux}):
\begin{equation}\label{NSM}
\left\{
\begin{aligned}
& \partial_{t} \rho+\operatorname{div}(\rho \boldsymbol{u})=0,   \\[2mm]
& \rho \left( \partial_{t}\boldsymbol{u} + \boldsymbol{u} \cdot \nabla \boldsymbol{u} \right)   +   \nabla p  =  \mu' \Delta\boldsymbol{u} + (\lambda+\mu') \nabla(\operatorname{div} \boldsymbol{u})  +  \boldsymbol{J} \times \boldsymbol{B},  \\[2mm]
& \frac{R}{\gamma-1} \rho (\partial_t \theta + \boldsymbol{u} \cdot \nabla \theta) + p \operatorname{div}\boldsymbol{u} = \operatorname{div}(\kappa \nabla \theta)  +  \mathbb{N}(\boldsymbol{u})  +  \boldsymbol{J} \cdot (\boldsymbol{E} + \boldsymbol{u}\times\boldsymbol{B}),  \\[2mm]
& \varepsilon \partial_{t} \boldsymbol{E}  -  \frac{1}{\mu_0}\operatorname{curl}\boldsymbol{B}+\boldsymbol{J}=0, \\[2mm]
& \partial_{t} \boldsymbol{B}  +  \operatorname{curl} \boldsymbol{E}=0, \quad \operatorname{div}\boldsymbol{B} = 0,
\end{aligned}\right.
\end{equation}
with the electric current density $\boldsymbol{J}  =  \sigma(\boldsymbol E + \boldsymbol u \times \boldsymbol B)$.
The system $ \eqref{NSM} $ is obtained from the Navier-Stokes equations coupling with the Maxwell equations through the Lorentz force,
so it is usually called the Navier-Stokes-Maxwell equations for convenience of presentation.
Notice that the same terminology was used in Duan \cite{duan2012} and Masmoudi \cite{Masmoudi1} but for the different modeling system.

In this paper, we shall restrict ourselves to the one-dimensional motion (see \cite{fanhu}, \cite{yaozhu2021}) on the whole spatial domain $ \mathbb{R} $:
\begin{equation}\label{yuanfangcheng}
\left\{\begin{aligned}
& \rho_{t}+(\rho u)_{x}=0, \\[2mm]
& \rho(u_t + uu_x) + p_x = (\lambda + 2 \mu')u_{xx} - (E + ub)b,  \\[2mm]
& \frac{R}{\gamma-1} \rho (\theta_t + u \theta_x) + pu_x =  \kappa \theta_{xx}  +  (\lambda + 2 \mu')u_{x}^2  +  (E + ub)^2,    \\[2mm]
& \varepsilon E_{t}-b_{x}+E+u b=0, \\[2mm]
& b_{t}-E_{x}=0,
\end{aligned}\right.
\end{equation}
with the following initial data and the far field behaviors:
\begin{equation}\label{chuzhi}
  (\rho,u,\theta,E,b)(x,0)=(\rho_0,u_0,\theta_0,E_0,b_0)(x),\quad x \in \mathbb{R},
\end{equation}
\begin{equation}\label{wuqiong}
  \lim_{x \to \pm \infty}(\rho_0, u_0, \theta_0, E_0, b_0)(x)=(\rho_\pm, u_\pm, \theta_\pm, E_\pm, b_\pm),
\end{equation}
where $\rho_\pm$, $u_\pm$, $\theta_\pm$, $E_\pm$, $b_\pm$ are constants.
Indeed, for spatial variable $ \boldsymbol{x} = ( x_1 , x_2 , x_3 )$, we assume that the electromagnetic flow is moving only in the longitudinal direction $ x_1 $ (below $ x_1 $ will be denoted by $ x $) and uniform in the transverse directions $( x_2 , x_3 )$.
We also take $\mu_0 = \sigma = 1$ for simplicity.
Then it is easy to derive \eqref{yuanfangcheng} from \eqref{NSM}, based on the specific choice of dependent variables:
$$\boldsymbol{u}=(u(x,t), 0, 0),  \quad \boldsymbol{B}=(0, b(x,t), 0),  \quad \boldsymbol{E}=(0, 0, E(x,t)).$$

We are interested in the large time behavior of solutions to $ \eqref{yuanfangcheng} $, $ \eqref{chuzhi} $, $ \eqref{wuqiong} $, and it is more convenient to use the Lagrangian coordinates to explore this problem. Hence, we introduce the coordinate transformation as follows:
\begin{equation*}
 x \Rightarrow \int _{(0,0)}^{(x,t)} \rho(z,s)\,\mathrm{d}z - (\rho u)(z,s)\,\mathrm{d}s, \qquad t \Rightarrow \tau.
\end{equation*}
We still denote the Lagrangian coordinates by $ (x,t) $, and $ (\lambda+2\mu') $ by $ \mu $ for the simplicity of notation. Then the Cauchy problem $ \eqref{yuanfangcheng} $, $ \eqref{chuzhi} $, $ \eqref{wuqiong} $ can be transformed as the following form
\begin{equation}\label{lagrange}
\left\{\begin{aligned}
&v_{t}-u_{x}=0, \\[2mm]
&u_{t}+p_{x}=\mu\left(\frac{u_{x}}{v}\right)_{x} - v(E+ub)b, \\[2mm]
&\frac{R}{\gamma-1}\theta_{t} + p u_{x} =\kappa\left(\frac{\theta_{x}}{v}\right)_{x} + \mu \frac{u^2_{x}}{v} + v(E+ub)^2, \\[2mm]
&\varepsilon \left(E_{t}-\frac{u}{v}E_x\right)-\frac{1}{v}b_{x}+ E+u b=0, \\[2mm]
&b_{t}-\frac{u}{v}b_x-\frac{1}{v}E_{x}=0.
\end{aligned}\right.
\end{equation}
Here $v = 1 / \rho$ denotes the specific volume and the pressure $p$ is expressed by
$$
p=\frac{R\theta}{v} = A v^{-\gamma} \exp{\frac{\gamma-1}{R} s},
$$
where $s$ is the entropy.
The initial data and the far field constant states are given by
\begin{equation}\label{chuzhi'}
(v,u,\theta,E,b)(x,0)=(v_0,u_0,\theta_0,E_0,b_0)(x),\quad x \in \mathbb{R},
\end{equation}
\begin{equation}\label{wuqiong'}
\lim_{x \to \pm \infty}(v_0, u_0, \theta_0, E_0, b_0)(x)=(v_\pm, u_\pm, \theta_\pm, E_\pm, b_\pm).
\end{equation}


Let us recall some known results about the Navier-Stokes-Maxwell equations.
There have been some research on the existence and large-time behavior of solutions to the compressible Navier-Stokes-Maxwell equations.
In \cite{Kawashima2} and \cite{Kawashima3}, Kawashima and Shizuta established the global existence of smooth solutions for small data and studied its zero dielectric constant limit in the whole space $ \mathbb{R}^2 $.
Later, Jiang and Li in \cite{jiangsong1} studied the zero dielectric constant limit and obtained the convergence of the system $ \eqref{NSM} $ to the full compressible magnetohydrodynamic equations in the torus $ \mathbb{T}^3 $.
Recently, Xu in \cite{xux} studied the large-time behavior of the classical solution toward some given constant states and obtained the time-decay estimates in the whole space $ \mathbb{R}^3 $ with small initial perturbation.
For the one-dimensional non-isentropic Navier-Stokes-Maxwell equations,
Fan and Hu in \cite{fanhu} proved the uniform estimates with respect to the dielectric constant and the global-in-time existence in a bounded interval without vacuum.
In addition, Fan and Ou in \cite{fanou} studied the one-dimensional full equations for a thermo-radiative electromagnetic fluid in a form similar to that in \eqref{yuanfangcheng};
and established the similar result to \cite{fanhu}.
However, for the compressible Navier-Stokes-Maxwell equations,
there are few results about the large-time behavior of the solution toward some non-constant states, especially the hyperbolic elementary waves.
For example, to the authors' best knowledge, there are only three relevant results.
Luo-Yao-Zhu in \cite{luoyaozhu2021} and Yao-Zhu in \cite{yaozhu2021} established the stability of rarefaction wave for the isentropic Navier-Stokes-Maxwell equations and non-isentropic ones under small $ H^1 $-initial perturbations, respectively.
What' more, Huang-Liu in \cite{huangyt} consider the stability of rarefaction wave for a macroscopic model derived from the Vlasov-Maxwell-Boltzmann system,
in which the model they consider is obviously different from ours in this paper,
except for the similar dissipative term $E + ub$.

The main purpose of this paper is to study the asymptotic stability of the combination of viscous contact wave with rarefaction waves to the Cauchy problem \eqref{lagrange}, \eqref{chuzhi'} and \eqref{wuqiong'} for the large time behavior.
We also notice that the one-dimensional Navier-Stokes-Maxwell equations $\eqref{lagrange}$ is a combination of the compressible non-isentropic Navier-Stokes equations and the Maxwell equations through the Lorentz force.
When omitting the electrodynamic effects, the system $\eqref{lagrange}$ reduces to the classical Navier-Stokes equations.
In fact, there are extensive studies concerning those results for the Navier-Stokes equations in the context of gas dynamics.
It is well known that the large-time behavior of solutions to the Cauchy problem for Navier-Stokes equations can converge to the Riemann solutions for the corresponding Euler equations.
As for the Riemann problem, under a proper assumption on the flux function, there are three kinds of elementary wave solutions: shock wave, rarefaction wave and contact discontinuity, and then the Riemann solution generally forms a multi-wave pattern given by a various linear combination of these three elementary waves (cf. \cite{Matsumura3}, \cite{Smoller} etc.).
Here, we mention several works on the asymptotic stability analysis of viscous wave pattern for the Navier-Stokes equations:
\cite{liu1986}, \cite{Kawashima1985}, \cite{MatsuNishi85} for the shock wave;
\cite{Matsumura1986}, \cite{Kawashima1986}, \cite{liu1988}, \cite{Matsumura1992}, \cite{nishi-yang-zhao2004} for the rarefaction wave;
\cite{huang-matsu-shi}, \cite{huangfm2006}, \cite{huang-xin-yang2008}, \cite{huang-zhao2003} for the contact discontinuity.
In particular, the asymptotics toward two rarefaction waves was established in Kawashima-Matsumura-Nishihara \cite{Kawashima1986} by using an energy form associated with the physical total energy and a monotone property of the smooth approximate rarefaction waves.
Moreover, motivated by the theory of viscous shock waves \cite{Kawashima1985},
Huang-Xin-Yang \cite{huang-xin-yang2008} succeeded in using the anti-derivative method to obtain not only the stability but also the convergence rate of the viscous contact wave for the Navier-Stokes equations and the Boltzmann equation.
However, their method is not available for the stability of the composite wave of viscous contact wave with rarefaction waves,
since the anti-derivative method is invalid for the stability of the rarefaction wave.
We should mention that the stability of the superposition of several wave patterns is more complicated and challenging
due to the fact that the stability analysis essentially depends on the underlying properties of basic wave pattern and the wave interaction between different families of wave patterns is complicated.
Recently, Huang-Matsumura in \cite{Huangmatsu2009} showed that the composite wave of two viscous shock waves for the Navier-Stokes equations is asymptotically stable without the zero initial mass condition.
Later Huang-Li-Matsumura in \cite{huangfm2010} proved the stability of the combination of viscous contact wave with rarefaction waves by deriving a new estimate on the heat kernel.
This new technique was needed by Zeng in \cite{zenghh2009} to obtain the stability of the superposition of shock waves with contact discontinuities for systems of viscous conservation laws,
and was also used by Huang-Wang in \cite{huangfmWang2016} for the global stability of the same wave patterns and system as in \cite{huangfm2010}.
In addition, some results about the asymptotic stability of the composite wave as in \cite{huangfm2010} were also shown for some more complex models, and we refer interested readers to \cite{ruan2017}, \cite{Hakhong2017}, \cite{li-wang-wang2018}, \cite{chenzz2019}, \cite{luo2020}, and references therein.
Due to the frameworks of viscous shock wave and rarefaction wave are not compatible with each other,
the time-asymptotic stability of the combination of viscous shock wave and rarefaction wave is still an interesting and challenging question for the Navier-Stokes equations!
Recently, Moon-Jin Kang, Alexis F. Vasseur and Yi Wang have submitted the paper \cite{kang-wang2021} titled ``Time-asymptotic stability of composite waves of viscous shock and rarefaction for barotropic Navier-Stokes equation'' to the online database: arXiv.org.
Maybe they have made some progress on the time asymptotic stability of the combination of viscous shock wave and rarefaction wave for the Navier-Stokes equations.

Motivated by the relationship between the compressible non-isentropic Navier-Stokes equations and Navier-Stokes-Maxwell equations, we provisionally assume the initial data of the electric field and the
magnetic field at both far fields $x=\pm \infty$ are zero, respectively, that is to say,
\begin{equation}\label{wuqiong1}
\lim_{x \to \pm \infty}(v_0, u_0, \theta_0, E_0, b_0)(x)=(v_\pm, u_\pm, \theta_\pm, 0, 0).
\end{equation}
We focus on the global solution in time of the Cauchy problem $\eqref{lagrange}$, $\eqref{chuzhi'}$, $\eqref{wuqiong1}$ and their large-time behavior toward viscous wave patterns in the relations with the spatial asymptotic states $(v_\pm, u_\pm, \theta_\pm, 0, 0)$. And the more challenging case for $E_-\neq E_+$ and $b_-\neq b_+$ is left for study in future.

Here, we briefly give some remarks on our problem and review some key analytical techniques.
To state our ideas clearly, we first consider the stability of only a viscous contact discontinuity wave.

Comparing with the stability of wave patterns for the Navier-Stokes equations, the main difficulties in the present paper to prove the nonlinear stability of wave patterns lie in the additional terms produced by the electrodynamic effects.
The extra terms about electromagnetic field $ E $ and $ b $ in the fluid equations part of \eqref{lagrange} are $ - v(E+ub)b  $ and $  v(E+ub)^2 $.
They both contain no derivatives of the solution,
which is disadvantageous to the derivation of the zero-order energy estimates.
The first bad term we suffered is $-\int_0^t\int_\mathbb{R}v(E+\psi b+\bar u b)\psi b \,{\rm{d}}x{\rm{d}}\tau$.
However, the lack of damping decay mechanism of the magnetic field $ b $ hinders us from obtaining the time-space integrable good term $ \int_0^t\int_{\mathbb{R}} b^2 \,\mathrm{d}x \mathrm{d}\tau $.
It brings a lot of difficulties when deriving the zero-order estimates.
To overcome this obstacle, we try to use the structure of the Maxwell equations and package extra terms together with the bad term $-\int_0^t\int_\mathbb{R}v(E+\psi b+\bar u b)\psi b \,{\rm{d}}x{\rm{d}}\tau$ to produce a compound time-space integrable good term $\int_0^t\int_\mathbb{R} v(E+\psi b+\bar u b)^2 \,\mathrm{d}x{\rm{d}}\tau$,
which is crucial to obtain the zero-order energy estimates (see \eqref{daiquanE}, \eqref{jibenneng} in Lemma \ref{dijieguji} etc.) and is essential to get high-order energy estimates.

Secondly, in order to absorb some nonlinear bad terms by some good terms concerning the electric field $ E $ or the magnetic field $ b $,
we require a technical condition \eqref{jiedianchangshuxiao} that $ \varepsilon $ is bounded for some specific positive constants $\bar C$.
Through some elaborate analysis, we can finally determine the value of the constant $\bar{C}$.
For example, see some bad terms in \eqref{danjifen1} absorbed by the good term $ \int_{\mathbb{R}} \left( \frac{1}{2}\varepsilon vE^2  +  \frac{1}{2} vb^2  \right) \,\mathrm{d}x $ in Lemma \ref{dijieguji};
and some ones in \eqref{lemshi3} absorbed by $ \| E_x \|^2 $ in Lemma \ref{exbxyijiedao}.
For the composite wave case,
see some bad terms in \eqref{tuidao2'}, \eqref{jibennengliang''} and \eqref{lemshi3''} absorbed by the corresponding good terms:
$  \frac{1}{2}\int_{\mathbb{R}} [(U^r_-)_x+(U^r_+)_x] (\varepsilon E^2 + b^2)\,{\rm{d}}x  $, $ \int_{\mathbb{R}} \frac{1}{2}(\varepsilon vE^2+vb^2) \,{\rm{d}}x  $ and $ \| E_x \|^2 $, respectively.


Thirdly, unlike the smooth approximate rarefaction wave, the sign of $ \bar{u}_x $ is not determined for viscous contact wave.
We have to estimate the terms including the weight $ \bar{u}_x $ such as $ \int_0^t\int_{\mathbb{R}} \bar{u}_x \left( \varepsilon E^2 + b^2 \right) \,\mathrm{d}x\mathrm{d}\tau $ (see in \eqref{tuidao2}), etc.
To overcome this difficulty, we observe that the fifth equation of $\eqref{raodong} $ can be transformed as $ \left( vb \right)_t - \left( E + \psi b + \bar{u} b  \right)_x = 0   $,
which contains the derivative of the compound dissipative term $E + \psi b + \bar{u} b$.
This transformation of $\eqref{raodong}_5 $ is quite different from the structure of $ \eqref{yuanfangcheng}_5 $ in the Eulerian coordinates.
Thanks to this key observation, by using the heat kernel type of inequality in Lemma \ref{heatkernel} borrowed from \cite{huangfm2010} and then employing the previous compound time-space integrable good term $\int_0^t\int_\mathbb{R} v(E+\psi b+\bar u b)^2\mathrm{d}x{\rm{d}}\tau$,
we can obtain the desired estimates; see the proof of \eqref{daiquanguji1} in Lemma \ref{quanguji}.

Fourthly, the Maxwell equations in the Lagrangian coordinates appear stronger nonlinearities than that in the Eulerian coordinates.
This will bring us some trouble terms like $ \int_{\mathbb{R}} (\left| \phi_x \right| + \left| \psi_x \right|)(E_x^2 + b_x^2) \,\mathrm{d}x $ in $ \eqref{Exbx4} $ when deriving the space integration term $\|(\sqrt{\varepsilon}E_x,b_x)\|^2$ in Lemma \ref{exbxyijiedao}.
To deal with these nonlinear terms we need the smallness of $ \| \phi_x \|_{L^\infty} $,
which just requires the {\it a priori} assumption that $ \| \phi \|_{H^2} $ is small.
Comparing with the {\it a priori} assumption that $ \| \phi \|_{H^1} $ is small for the Navier-Stokes equations in \cite{huangfm2010},
we still need to control the space integration term $ \| \phi_{xx} \|^2 $.
This fact also urges us to improve the regularity of $ \psi $ and $ \zeta $ to the Sobolev space $ L^{\infty}\left(I ; H^{2}\right) $,
where $ I $ is an interval with respect to the time variable $ t $.
For this aim, we propose the {\it a priori} assumption that $ \| \psi \|_{H^2} $ and $ \| \zeta \|_{H^2} $ are both small.

Finally, for the stability of the superposition of viscous contact wave with rarefaction waves to the compressible Navier-Stokes-Maxwell equations,
we need to consider the wave interactions of these two types of basic waves,
under the combined effects of the electric field and the magnetic field.
Borrowing some similar computations from the stability of these two typical wave patterns to the compressible Navier-Stokes equations in \cite{huangfm2010},
and combining with the arguments in \cite{yaozhu2021} of the stability of a single rarefaction wave for the Navier-Stokes-Maxwell equations in the Eulerian coordinates,
we can modify our method of a single viscous contact wave slightly to overcome the difficulties caused by the two rarefaction waves.

\vspace{2.2mm}
\noindent \textbf{Notations:} Throughout this paper, $C$ denotes some universal positive constant which is independent of $x$ and $t$ and
may vary from line to line. $\|\cdot\|_{L^p}$ stands the $L^p$-norm on the Lebesgue space ${L^p}(\mathbb{R})\;(1\leq p\leq \infty)$. For the
sake of convenience, we always denote $\|\cdot\|=\|\cdot\|_{L^2}$. What's more, $H^k$ will be
used to denote the usual Sobolev space $W^{k,2}(\mathbb{R})\;(k\in \mathbb{Z}_+)$ with respect to variable $x$.

\subsection{Smooth approximate profiles and main results}
By employing asymptotic analysis arguments with the setting of $ E_\pm = b_\pm =0 $ for the electromagnetic field,
the large-time behavior of solutions for the Cauchy problem $\eqref{lagrange}$, $\eqref{chuzhi'}$, $\eqref{wuqiong1}$ is expected to be determined by the following Riemann problem:
\begin{equation}\label{nstuidao2}
\left\{
\begin{aligned}
&v_{t}-u_{x}=0, \\[2mm]
&u_{t}+p_{x}=0, \\[2mm]
& \left( \frac{R}{\gamma-1}\theta + \frac{u^2}{2} \right)_t + (p u)_{x} = 0,
\end{aligned}
\right.
\end{equation}
with initial data
\begin{equation}\label{Riemanchuzhi}
(v,u,\theta)(x,0)=\left\{
\begin{aligned}
&(v_-,u_-,\theta_-),\quad x<0,\\[2mm]
&(v_+,u_+,\theta_+),\quad x>0.
\end{aligned}
\right.
\end{equation}
The system of conservation laws $ \eqref{nstuidao2} $ with $ \eqref{Riemanchuzhi} $ has three distinct real eigenvalues (see Ref. \cite{Smoller})
\begin{equation*}
 	\lambda_1 = -\sqrt{\frac{\gamma p}{v}} < 0,\qquad \lambda_2 = 0, \qquad \lambda_3 = \sqrt{\frac{\gamma p}{v}} > 0,
 \end{equation*}
which implies the first and third characteristic fields are genuinely nonlinear and the second field is linearly degenerate.
The basic theory of hyperbolic systems of conservation laws (for example, see Ref. \cite{Smoller}) implies that for any given constants $ (v_-, u_-, \theta_-) $ with $ v_->0 $, $ \theta_->0 $, there exists a suitable neighborhood $ \Omega(v_-, u_-, \theta_-) $ of $ (v_-, u_-, \theta_-) $ such that for any $ (v_+, u_+, \theta_+) \in \Omega(v_-, u_-, \theta_-) $,
the Riemann problem $ \eqref{nstuidao2} $ and $ \eqref{Riemanchuzhi} $ admits a solution consisting of the basic wave patterns.


In this paper, we will conclude the asymptotic stability of the linear combination of a viscous contact wave and two rarefaction waves for the Cauchy problem of the compressible non-isentropic
Navier-Stokes-Maxwell equations $\eqref{lagrange}$, $\eqref{chuzhi'}$, $\eqref{wuqiong1}$, provided the perturbations of initial data are suitably small and the dielectric constant is bounded.
We first consider the case of only a viscous contact discontinuity wave.
When omitting the effect of the electromagnetic field, the system \eqref{lagrange} reduces to the classical Navier-Stokes equations.
Recalling for the Riemann problem \eqref{nstuidao2}--\eqref{Riemanchuzhi}, the contact discontinuity wave $ (v^c, u^c, \theta^c)(x,t) $ takes the form
\begin{equation*}
   (v^c, u^c, \theta^c)(x,t) = \left\{
   \begin{aligned}
    \left(v_{-}, u_{-}, \theta_{-}\right), \quad & x<0,\; t>0, \\[2mm]
    \left(v_{+}, u_{+}, \theta_{+}\right), \quad & x>0,\; t>0,
   \end{aligned}\right.
\end{equation*}
provided that
\begin{equation}\label{czuoyouzhuangtai}
  u_{-} = u_{+}, \qquad p_{-} := \frac{R \theta_{-}}{v_{-}} = \frac{R \theta_{+}}{v_{+}} =: p_{+} .
\end{equation}
In the setting of the compressible Navier-Stokes equations, the corresponding wave $ (\bar{v}, \bar{u}, \bar{\theta}) $ to the contact discontinuity $ (v^c, u^c, \theta^c) $ behaves as a diffusion wave due to the dissipation effect, which called a viscous contact wave (see \cite{huang-xin-yang2008}, \cite{huangfm2010}).
Now we construct the viscous contact wave $ (\bar{v}, \bar{u}, \bar{\theta}) $ as follows.
Since the pressure for the profile $ (\bar{v}, \bar{u}, \bar{\theta}) $ is hoped to be almost constant (see \cite{huangfm2006}), we take
\begin{equation*}
	\bar{p}: = \frac{R \bar{\theta}}{\bar{v}} = p_+,
\end{equation*}
which indicates the leading part of the temperature equation is
\begin{equation}\label{gouzao1}
  \frac{R}{\gamma -1} \bar{\theta}_t  + p_+ \bar{u}_x = \kappa \left( \frac{\bar{\theta}_x}{\bar{v}} \right)_x.
\end{equation}
The equations \eqref{gouzao1} and $ \bar{v}_t - \bar{u}_x = 0 $ can deduce a nonlinear diffusion equation:
\begin{equation}\label{kuosanfangcheng}
  \bar{\theta}_t  = a \left( \frac{\bar{\theta}_x}{\bar{\theta}} \right)_x, \quad \bar{\theta}(\pm\infty,t) = \theta_\pm, \quad a = \frac{\kappa p_+ (\gamma-1)}{\gamma R^2}>0.
\end{equation}
Then we have the following lemma (cf. \cite{hsiaoliu}, \cite{vanduyn}):
\begin{lemma}\label{jianduanboshuaijian}
Set the strength of wave $ \delta : = \left| \theta_+ - \theta_- \right| $.
Then the problem \eqref{kuosanfangcheng} has a unique self-similarity solution $ \bar{\theta}(\xi) $, $ \xi=\frac{x}{\sqrt{1+t}} $ satisfying
\begin{itemize}
\item[$\mathrm{(i)}$] $ \bar{\theta}(\xi) $ is a monotone function, increasing if $ \theta_- < \theta_+  $ and decreasing if $ \theta_- > \theta_+  ;$

\item[$\mathrm{(ii)}$] There exists a positive constant $ \bar{\delta} $, such that for $ \delta  \leq \bar{\delta}  $, $ \bar{\theta}( \frac{x}{\sqrt{1+t}} )  $ satisfies
\begin{equation}\label{shuaijianlv1}
  (1+t)^{\frac{k}{2}}\left| \partial_x^k \bar{\theta} \right|+\left|\bar{\theta} -\theta_{\pm}\right| \leq c_{1} \delta \mathrm{e}^{-\frac{\hat{c} x^{2}}{1+t}}, \quad \text { as }|x| \rightarrow \infty, \;\; k \geq 1,
\end{equation}
where $ c_1 $ and $ \hat{c} $ are both positive constants depending only on $ \theta_- $ and $ \bar{\delta} $.
\end{itemize}

\end{lemma}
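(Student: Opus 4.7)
My plan is to reduce \eqref{kuosanfangcheng} to a second-order ODE via the self-similar ansatz, establish existence and monotonicity of the profile by a perturbation argument from the linear case (exploiting smallness of $\delta$), and then obtain the Gaussian decay in \eqref{shuaijianlv1} by linearising the resulting ODE at $\pm\infty$.

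First I would insert $\bar\theta(x,t)=\Theta(\xi)$ with $\xi=x/\sqrt{1+t}$ into \eqref{kuosanfangcheng}; using $\bar\theta_t=-\frac{\xi}{2(1+t)}\Theta'(\xi)$ and $\bar\theta_x=(1+t)^{-1/2}\Theta'(\xi)$, the $(1+t)$-factors cancel and the PDE collapses to the autonomous two-point boundary value problem
$$a\left(\frac{\Theta'}{\Theta}\right)'+\frac{\xi}{2}\Theta'=0,\qquad \Theta(\pm\infty)=\theta_\pm.$$
Expanding the first term, this reads $a\Theta''/\Theta+\frac{\xi}{2}\Theta'=a(\Theta')^2/\Theta^2$, which I would view as a quasilinear perturbation of the constant-coefficient linear profile equation $\frac{a}{\theta_-}\Phi''+\frac{\xi}{2}\Phi'=0$, whose unique monotone solution matching the boundary values is a shifted error function $\Phi_0(\xi)=\theta_-+(\theta_+-\theta_-)\cdot\frac12\mathrm{erfc}\bigl(-\xi\sqrt{\theta_-/(4a)}\bigr)$.

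Next I would construct $\Theta$ as a small perturbation of $\Phi_0$ by a contraction-mapping argument in a Gaussian-weighted $C^2$-space: the mapping sends a candidate $\Theta$ to the unique bounded $\widetilde\Theta$ solving the linear equation with the prescribed limits and forcing $F(\Theta)=a(\Theta')^2/\Theta^2+a\bigl(1/\Theta-1/\theta_-\bigr)\Theta''$; for $\delta\le\bar\delta$ small, the Lipschitz constant is $O(\bar\delta)<1$ and produces a unique smooth fixed point close to $\Phi_0$. Monotonicity transfers from $\Phi_0$: any first interior zero of $\Theta'$ would force $(\Theta'/\Theta)'=0$ there through the ODE, and a simple integration on each side shows $\Theta'$ could not subsequently produce the required jump $\theta_+-\theta_-$ at infinity.

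For the decay I would set $\Phi(\xi):=\Theta(\xi)-\theta_+$ on $\xi\ge 0$; the ODE becomes $\Phi''+\frac{\xi\Theta}{2a}\Phi'=(\Phi')^2/\Theta$, and since $\Theta\to\theta_+$ and $\Phi'$ is small, it is uniformly close for $\xi$ large to $\Phi''+\frac{\theta_+}{2a}\xi\Phi'=0$, whose decaying branch satisfies $\Phi'=C\exp(-\theta_+\xi^2/(4a))$. A sub/super-solution comparison then gives $|\Phi(\xi)|\lesssim\delta\exp(-\hat c\xi^2)$ with $\hat c=\theta_+/(4a)-O(\bar\delta)$, and the mirror bound holds at $-\infty$. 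Differentiating the ODE $k$ times and iterating the same analysis yields $|\Theta^{(k)}(\xi)|\lesssim\delta\exp(-\hat c\xi^2)$; since $\partial_x^k\bar\theta=(1+t)^{-k/2}\Theta^{(k)}(\xi)$, \eqref{shuaijianlv1} follows. The main obstacle is arranging the contraction argument \emph{uniformly} in $\delta\in(0,\bar\delta]$, so that the same $\bar\delta$ simultaneously controls the perturbation of the principal coefficient and the decay exponent; this forces the fixed-point space to be weighted by precisely the anticipated Gaussian tail, and once that functional-analytic setup is in place the remaining steps reduce to elementary ODE comparison.
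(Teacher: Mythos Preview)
The paper does not prove this lemma at all: it is stated immediately after the construction of the diffusion equation \eqref{kuosanfangcheng} with the parenthetical ``(cf.\ \cite{hsiaoliu}, \cite{vanduyn})'' and is used throughout the paper as a black box. There is therefore no in-paper argument to compare your proposal against.

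Your outline is broadly in the spirit of how such self-similar profiles are actually constructed (reduce to the ODE $a(\Theta'/\Theta)'+\tfrac{\xi}{2}\Theta'=0$, obtain existence/uniqueness, then read off Gaussian tails from the linearised equation at $\pm\infty$), and the ODE reduction you wrote is correct. A couple of points are worth tightening if you intend to flesh this out. First, your monotonicity step can be made much cleaner than the argument you sketched: if $\Theta'(\xi_0)=0$ at some interior point, the ODE gives $\Theta''(\xi_0)=0$ as well, so by uniqueness for the second-order IVP the solution is the constant $\Theta\equiv\Theta(\xi_0)$, contradicting $\theta_+\neq\theta_-$; no integration or ``jump'' reasoning is needed. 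Second, the cited references (in particular van~Duyn--Peletier) do not use a contraction around the error-function profile; they treat the problem directly by ODE/shooting methods valid for all $\delta$, and the smallness of $\delta$ enters only when one wants uniform constants in the Gaussian decay. Your perturbative route buys simplicity but builds the smallness hypothesis into existence itself, which is stronger than what the lemma actually requires (existence and monotonicity hold without $\delta\le\bar\delta$; only the constants in \eqref{shuaijianlv1} need it).
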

Once $ \bar{\theta} = \bar{\theta} ( \frac{x}{\sqrt{1+t}} ) $ is determined, the viscous contact wave profile $ (\bar{v}, \bar{u}, \bar{\theta})(x,t) $ can be defined as follows:
\begin{equation}\label{smoothjianduanbo}
  \bar{v}(x,t) = \frac{R}{p_+} \bar{\theta}, \qquad \bar{u}(x,t) = u_- + \frac{\kappa (\gamma-1)}{\gamma R} \frac{\bar{\theta}_x}{\bar{\theta}}, \qquad \bar{\theta}(x,t) = \bar{\theta}.
\end{equation}
Importantly,
the contact wave $ (\bar{v}, \bar{u}, \bar{\theta})(x,t) $ solves the compressible Navier-Stokes equations time asymptotically, i.e.,
\begin{equation*}
  \left\{
 \begin{aligned}
 & \bar{v}_t   -  \bar{u}_x  = 0,  \\[2mm]
 & \bar{u}_t  +  \left( \frac{R \bar{\theta}}{\bar{v}} \right)_x   =  \mu \left( \frac{\bar{u}_x}{\bar{v}} \right)_x    +  R_1 ,     \\[2mm]
 & \frac{R}{\gamma-1}\bar{\theta}_{t} + {p}_+ \bar{u}_{x} = \kappa\left(\frac{\bar{\theta}_{x}}{\bar{v}}\right)_{x}  +  \mu \frac{\bar{u}^2_{x}}{\bar{v}}    + R_2,  \\[2mm]
\end{aligned}
  \right.
\end{equation*}
where
\begin{align*}
R_{1}
& =\frac{\kappa(\gamma-1)}{\gamma R} \left( (\ln \bar{\theta})_{x t}-\mu\left(\frac{p_{+}}{R \bar{\theta}}(\ln \bar{\theta})_{x x}\right)_{x} \right)   \\[2mm]
&=O(\delta)(1+t)^{-\frac{3}{2}} \mathrm{e}^{-\frac{\hat{c} x^{2}}{1+t}}, \qquad \text { as }|x| \rightarrow \infty,  \\[2mm]
R_{2}
& =\left(\frac{\kappa(\gamma-1)}{\gamma R}\right)^{2}\left((\ln \bar{\theta})_{x}(\ln \bar{\theta})_{x t}-\mu\left(\frac{p_{+}}{R \bar{\theta}}(\ln \bar{\theta})_{x}(\ln \bar{\theta})_{x x}\right)_{x}\right)   \\[2mm]
&=O(\delta)(1+t)^{-2} \mathrm{e}^{-\frac{\hat{c} x^{2}}{1+t}}, \qquad \text { as }|x| \rightarrow \infty.
\end{align*}

Define the perturbation of the solution as
\begin{equation}\nonumber
(\phi, \psi,\zeta)(x,t) = (v - \bar v, u - \bar u, \theta - \bar{\theta})(x,t).
\end{equation}
For interval $I \subset [0, \infty)$, we define a function space $X(I)$ as
$$X(I)=\left \{
(\phi, \psi, \zeta, E, b) \left|\;
\begin{aligned}
& \left(\phi, \psi, \zeta \right) \in L^{\infty}\left(I ; H^{2}(\mathbb{R}) \right),  \;\;  \left( E, b\right) \in L^{\infty}\left(I ; H^{1}(\mathbb{R})\right),   \\[2mm]
& \;\; \phi_{x} \in L^{2}\left(I ; H^{1}(\mathbb{R}) \right), \; \;   \left(\psi_{x}, \zeta_{x}\right) \in L^{2}\left(I ; H^{2}(\mathbb{R}) \right),   \\[2mm]
& \left( E_{x}, b_{x} \right)  \in L^{2}\left(I ; L^{2}(\mathbb{R}) \right)
\end{aligned}\right.
\right\}.$$
We now can state our first main result:
\begin{theorem}[Viscous contact wave]\label{thm1}
Assume that the initial data satisfy $\eqref{wuqiong1}$, $\eqref{czuoyouzhuangtai}$, and the dielectric constant $\varepsilon$ satisfies
\begin{equation}\label{jiedianchangshuxiao}
0<\varepsilon<\bar{C}
\end{equation}
 for some positive constant $\bar{C}$ $($depending only on $v_\pm$ and $|u_-|$$)$.
 There exist two small positive constants $ \varepsilon_1 $ and $ \delta_1 $ which are independent of $T$, such that if $ 0 < \delta < \min \{\delta_1, \bar{\delta}\}$ and the initial data $(v_0, u_0, \theta_0, E_0, b_0)(x)$ satisfy
\begin{equation*}
  \|\left( v_{0}(x)- \bar v(x,0),\, u_{0}(x) - \bar u(x,0),\,  \theta_0(x)- \bar {\theta}(x,0) \right) \|^2_{H^2}       + \|( E_{0}(x),\,  b_{0}(x)) \|_{H^1}^2 \leq \varepsilon_1,
\end{equation*}
then the Cauchy problem $\eqref{lagrange}$, $\eqref{chuzhi'}$, $\eqref{wuqiong1}$ admits a unique global solution $(v,u,\theta,E,b)(x,t)$ satisfying $ \left( v-\bar{v}, u - \bar{u}, \theta - \bar{\theta}, E, b \right)(x,t) \in X ( [0, \infty) ) $ and
\begin{equation*} 
\sup_{t> 0}  \left(   \|(v - \bar v,\, u - \bar u,\, \theta - \bar {\theta})(\cdot,t)\|_{H^2}^2      + \|(E,\, b)(\cdot,t)\|_{H^1}^2  \right)   \leq  C_{0}\varepsilon_1.
\end{equation*}
Moreover, the solution $(v,u,\theta,E,b)(x,t)$ tends time-asymptotically to the viscous contact wave in the sense that
\begin{equation}\label{dashijianxingwei1}
\lim_{t\to{+\infty}}\sup_{x\in{\mathbb{R}}}|(v,u,\theta,E,b)(x,t) - (\bar{v}, \bar{u}, \bar{\theta}, 0, 0)({x},{t})|=0.
\end{equation}
\end{theorem}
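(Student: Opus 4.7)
The plan is a standard continuation argument: establish local existence, close uniform a priori estimates on the perturbation $(\phi,\psi,\zeta)(x,t) := (v-\bar v,\,u-\bar u,\,\theta-\bar\theta)(x,t)$, with $(E,b)$ serving as its own perturbation since $E_\pm = b_\pm = 0$ under \eqref{wuqiong1}. Subtracting the approximate equations satisfied by $(\bar v,\bar u,\bar\theta)$ from \eqref{lagrange} yields a quasilinear system for $(\phi,\psi,\zeta,E,b)$ forced by the remainders $R_1, R_2$ from Lemma \ref{jianduanboshuaijian}. Standard parabolic-hyperbolic theory supplies local existence in $X([0,T_0])$. The a priori estimate is carried out under the closed smallness assumption $\sup_{[0,T]}N(t)\le\nu$ with $\nu$ small, where $N(t)^2:=\|(\phi,\psi,\zeta)\|_{H^2}^2+\|(E,b)\|_{H^1}^2$, and aims at
\begin{equation*}
\sup_{[0,T]} N(t)^2 + \int_0^T\bigl(\|\phi_x\|_{H^1}^2 + \|(\psi_x,\zeta_x)\|_{H^2}^2 + \|(E_x,b_x)\|^2\bigr)\,d\tau \;\le\; C\varepsilon_1 + C\delta^{1/4}.
\end{equation*}

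The heart of the proof is the zero-order estimate. Following \cite{huangfm2006,huangfm2010}, I build the entropy-energy density $\eta := \tfrac12\psi^2 + R\bar\theta\,\Phi(v/\bar v) + \tfrac{R}{\gamma-1}\bar\theta\,\Phi(\theta/\bar\theta)$ with $\Phi(s)=s-1-\ln s\ge 0$; its evolution along the fluid equations produces the usual Navier-Stokes dissipation $\mu\psi_x^2/(v\theta)+\kappa\zeta_x^2/(v\theta^2)$, together with contributions from $R_1, R_2$, from weighted terms in $\bar u_x,\bar\theta_x$, and crucially from the two Lorentz/Joule source terms $-v(E+ub)b$ and $+v(E+ub)^2$. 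Multiplying $\eqref{lagrange}_4$ by $vE$ and $\eqref{lagrange}_5$ by $vb$ and summing yields, by $v_t=u_x$,
\begin{equation*}
\tfrac12(\varepsilon vE^2+vb^2)_t - \bigl[\tfrac12\varepsilon u E^2 + \tfrac12 u b^2 + Eb\bigr]_x + vE(E+ub)=0.
\end{equation*}
On integration, $\int vE(E+ub)\,dx$ combined with the momentum contribution $-\int v(E+ub)b\psi\,dx$ and $u=\bar u+\psi$ produces exactly the compound dissipation $\int v(E+\psi b+\bar u b)^2\,dx$ modulo cross-terms $\int vb(\bar u+2\psi)(E+ub)\,dx$ absorbable by Cauchy-Schwarz. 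This compound dissipation substitutes for the missing damping of $b$. The bound \eqref{jiedianchangshuxiao} is invoked here to absorb the indefinite-sign cross-terms $\int \bar u_x(\varepsilon E^2+b^2)\,dx$ into the basic energy $\tfrac12\|(\sqrt{\varepsilon v}E,\sqrt{v}b)\|^2$ and the compound dissipation; the corresponding time-integrated weighted terms, which cannot be handled by smallness alone due to the indefinite sign of $\bar u_x$ for a contact wave, are controlled by rewriting $\eqref{lagrange}_5$ as $(vb)_t - (E+\psi b+\bar u b)_x = 0$ and applying the heat-kernel inequality borrowed from \cite{huangfm2010}.

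The first- and second-order estimates are obtained by differentiating in $x$ and testing with the corresponding spatial derivative. For the fluid part, the $\partial_x$-energy estimates produce $\|(\psi_{xx},\zeta_{xx})\|^2$ via parabolicity, while dissipation for $\|\phi_x\|^2$ is recovered by the classical Kanel'-type multiplier combining $\mu\phi_x/v$ and $\psi$. For the Maxwell part, differentiating $\eqref{lagrange}_4$ and $\eqref{lagrange}_5$ and multiplying by $vE_x$ and $vb_x$ generates $\|E_x\|^2$ as dissipation, again via the compound-term mechanism. This $\|E_x\|^2$ must then absorb the cubic nonlinearities $\int(|\phi_x|+|\psi_x|)(E_x^2+b_x^2)\,dx$ arising from the strong nonlinearity of the Maxwell equations in the Lagrangian frame; this absorption demands smallness of $\|\phi_x\|_{L^\infty}$, which by Sobolev embedding forces control of $\phi_{xx}$, and hence the $H^2$-regularity hypothesis on $(\phi,\psi,\zeta)$. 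The second-order estimate is obtained analogously, using the fast decay $|\bar v_{xx}|+|\bar u_{xx}|+|\bar\theta_{xx}|=O(\delta)(1+t)^{-3/2}$ from Lemma \ref{jianduanboshuaijian}.

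The principal obstacle is the closure of the coupled Maxwell-fluid estimates: extracting $\int v(E+\psi b+\bar u b)^2\,dx$ as the effective substitute for damping of $b$, controlling the indefinite-sign weights $\bar u_x$ through this compound term plus the heat-kernel inequality, and closing the first-order nonlinearities with only $\|E_x\|^2$ as electromagnetic dissipation, all under the $\varepsilon<\bar C$ constraint that finely calibrates the absorption of cross-terms by the basic energy. Once the a priori estimate is closed, a standard continuation argument yields global existence and the stated $H^2$-bound. The asymptotic behavior \eqref{dashijianxingwei1} then follows from the integrability in $t$ of the dissipation rates together with $\int_0^\infty |\tfrac{d}{dt}\|(\phi_x,\psi_x,\zeta_x,E,b)\|^2|\,d\tau < \infty$, which forces $\|(\phi_x,\psi_x,\zeta_x,E,b)(\cdot,t)\|\to 0$ as $t\to\infty$; the Sobolev embedding $\|f\|_{L^\infty}^2\le 2\|f\|\,\|f_x\|$ combined with the uniform $H^2$-bound then produces the $L^\infty$ convergence.
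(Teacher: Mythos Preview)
Your overall architecture matches the paper's: continuation from local existence, the entropy functional $\tfrac12\psi^2+R\bar\theta\,\Phi(v/\bar v)+\tfrac{R}{\gamma-1}\bar\theta\,\Phi(\theta/\bar\theta)$ for the zero-order estimate, the compound dissipation $\int v(E+\psi b+\bar u b)^2\,dx$ as the substitute for damping on $b$, the rewriting $(vb)_t=(E+ub)_x$ feeding into the heat-kernel lemma, the Kanel'-type multiplier for $\phi_x$, and the need for $H^2$ control of $\phi$ to close the Maxwell nonlinearities. The higher-order scheme and the asymptotic-behavior argument are also correct.

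There is, however, a genuine gap in how you assemble the compound dissipation. From your (correct) Maxwell identity you obtain $\int vE(E+ub)\,dx$, and from momentum $-\int v(E+ub)\psi b\,dx$; together these give $\int v(E+ub)(E+\psi b)\,dx=\int v(E+ub)^2\,dx-\int v(E+ub)\bar u b\,dx$. You claim the residual $\int v(E+ub)\bar u b\,dx$ is ``absorbable by Cauchy--Schwarz''. It is not when $u_-\neq 0$: Cauchy--Schwarz leaves $C|u_-|^2\|b\|^2$, which carries no smallness in $\delta$ or $\varepsilon_0$ and is not time-integrable (there is no damping on $b$). The paper's remedy is an additional multiplier: test $\eqref{raodong}_4$ with $v\bar u b$ (see \eqref{tuidao2}). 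This produces exactly $\int v(E+ub)\bar u b\,dx$ as a good left-hand term, at the price of (i) an energy correction $\int\varepsilon v\bar u Eb\,dx$ and (ii) the weighted remainder $-\tfrac12\int\bar u_x(\varepsilon E^2+b^2)\,dx$. Item (i) is what actually forces \eqref{jiedianchangshuxiao} at zero order: positivity of the modified energy $\tfrac12\varepsilon vE^2+\tfrac12 vb^2+\varepsilon v\bar u Eb$ requires $\varepsilon u_-^2$ bounded (cf.\ \eqref{danjifen1}). Item (ii) is the $\bar u_x$-weighted term you mention, handled as you say via the heat-kernel lemma---but it only arises \emph{after} the $v\bar u b$ multiplier, not from your Cauchy--Schwarz step, so your narrative is internally inconsistent.

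A parallel gap appears at first order. Differentiating and testing with $vE_x,vb_x$ yields only $\int vE_x^2\,dx$ as dissipation; the contribution $\int v\bar u b_x E_x\,dx$ from $(\bar u b)_x vE_x$ again leaves $C|u_-|^2\|b_x\|^2$ after Cauchy--Schwarz (cf.\ \eqref{Exbx4}), and your proposal does not say where $\int_0^T\|b_x\|^2\,d\tau$ comes from. The paper extracts it by testing $\eqref{raodong}_4$ with $-vb_x$ (see \eqref{gujiExbx6}--\eqref{lemshi2}), gaining $\tfrac12\|b_x\|^2$ at the cost of $\varepsilon\|E_x\|^2$; combining this with the $E_x$-estimate in the right proportion is where the quantitative bound $\varepsilon u_-^2<\tfrac{\min\{v_\pm\}}{64\max\{v_\pm\}}$ of \eqref{vepu-2} enters. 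So the constraint \eqref{jiedianchangshuxiao} is used for energy positivity and for coupling the $E_x$/$b_x$ dissipations, not---as you wrote---for absorbing $\int\bar u_x(\varepsilon E^2+b^2)\,dx$ into the basic energy.
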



\begin{remark}\label{remark1}
In the case of $ u_- \neq 0 $, due to \eqref{vepu-2}, we can take the constant $\bar{C}$ in \eqref{jiedianchangshuxiao} as
\begin{equation}\label{barc}
 \bar{C}  =  \frac{\min\{v_\pm\}}{64 \max\{v_\pm\} u_-^2 } .
\end{equation}
Then for each given $\varepsilon$ satisfying the condition \eqref{jiedianchangshuxiao}, our system \eqref{lagrange} is definitely well-defined.
On the one hand, when we take $ \left| u_- \right|  $ suitably small,
 the dielectric
constant $ \varepsilon $ can be large enough, which can be seen from the conditions \eqref{jiedianchangshuxiao} and \eqref{barc}.
This fact can relax the requirement of smallness of $ \varepsilon $.
On the other hand, the conditions \eqref{jiedianchangshuxiao} and \eqref{barc} together can relax the restriction on $ \left| u_- \right|  $ as long as the dielectric constant $ \varepsilon $ is suitably small.
Furthermore, in the case of $ u_- = 0 $,
the dielectric constant $\varepsilon$ can be an arbitrary given positive constant.
In other words, we can remove the technical condition \eqref{jiedianchangshuxiao}.
This can be easily verified from \eqref{danjifen1} in Lemma \ref{dijieguji}; \eqref{Exbx4} and \eqref{lemshi2} in Lemma \ref{exbxyijiedao} mutually.
Thus, an interesting problem occurs,
that is
how to remove the technical condition \eqref{jiedianchangshuxiao} for the case of $ u_- \neq 0 $ in future.
\end{remark}


\begin{remark}\label{remark2}

It should be pointed out that, we can also prove the asymptotic stability of a single rarefaction wave for the compressible non-isentropic Navier-Stokes-Maxwell equations in the Lagrangian coordinates.
Indeed, by employing the property of the smooth approximate rarefaction wave and using similar elementary energy method for a single viscous contact wave in this article,
we can analogously prove that the Cauchy problem $\eqref{lagrange}$, $\eqref{chuzhi'}$, $\eqref{wuqiong1}$ admits a unique global solution $(v,u,\theta,E,b)(x,t)$,
which tends time-asymptotically to the rarefaction wave in the sense of $ L^\infty_x $ norm.
For more details of calculation, we refer interested readers to \cite{yaozhu2021} for reference, in which the Navier-Stokes-Maxwell equations are described by the Eulerian coordinates.


\end{remark}

Next, we will state the second main result. To investigate the asymptotic behavior of the solution toward the combination of viscous contact wave with two rarefaction waves, we assume that
\begin{equation*}
\left(v_{+}, u_{+}, \theta_{+}\right) \in R_{1} C R_{3}\left(v_{-}, u_{-}, \theta_{-}\right) \subset \Omega\left(v_{-}, u_{-}, \theta_{-}\right),
\end{equation*}
where
\begin{align*}
& \quad  R_{1} C R_{3} \left(v_{-}, u_{-}, \theta_{-}\right) :=     \\[2mm]
& \left\{(v, u, \theta) \in \Omega\left(v_{-}, u_{-}, \theta_{-}\right)   \left|\; s \neq s_{-},\;\,  u \geq u_{-}  - \int_{v_{-}}^{\mathrm{e}^{\frac{\gamma-1}{R \gamma}\left(s_{-}-s\right)} v}  \lambda_{-}\left(\eta, s_{-}\right) \mathrm{d} \eta, \right.  \right.    \\[2mm]
& \qquad\qquad\qquad\qquad\qquad\qquad\qquad\quad  \left.\quad  u \geq u_{-} -\int^v _{\mathrm{e}^{\frac{\gamma-1}{R \gamma} \left(s-s_{-}\right)} {v_{-}}} \lambda_{+}(\eta, s) \,\mathrm{d} \eta.\;\; \right\}
\end{align*}
with
\begin{equation*}
s=\frac{R}{\gamma-1} \ln \frac{R \theta}{A}+R \ln v, \qquad s_{\pm}=\frac{R}{\gamma-1} \ln \frac{R \theta_{\pm}}{A}+R \ln v_{\pm}
\end{equation*}
and
\begin{equation*}
  \lambda_{\pm}(v, s) = \pm \sqrt{A \gamma v^{-\gamma-1} \mathrm{e}^{(\gamma-1) \frac{s}{R}}} .
\end{equation*}
As in \cite{Smoller} (or \cite{huangfm2010}) pointed out, there exist some suitably small $ \tilde{\delta} > 0 $ such that for
\begin{equation}\label{r1cr3youzhuangtai}
\left(v_{+}, u_{+}, \theta_{+}\right) \in R_{1} C R_{3}\left(v_{-}, u_{-}, \theta_{-}\right), \quad   \left| \theta_+  -  \theta_- \right| \leq \tilde{\delta},
\end{equation}
there exists a positive constant $ C_1 = C ( \theta_-, \tilde{\delta} ) $ and a unique pair of points $ \left( v^m_-, u^m, \theta^m_- \right)  $ and $ \left( v^m_+, u^m, \theta^m_+ \right)  $ in $  \Omega\left(v_{-}, u_{-}, \theta_{-}\right) $ satisfying
\begin{equation*}
  \frac{R \theta_{-}^{m}}{v_{-}^{m}}=\frac{R \theta_{+}^{m}}{v_{+}^{m}} =: p^{m}
\end{equation*}
and
\begin{equation*}
\left|v_{\pm}^{m}-v_{\pm}\right|+\left|u^{m}-u_{\pm}\right|+\left|\theta_{\pm}^{m}-\theta_{\pm}\right| \leq C_1 \left|\theta_{+}-\theta_{-}\right| .
\end{equation*}
The points $ \left( v^m_-, u^m, \theta^m_- \right)  $ and $ \left( v^m_+, u^m, \theta^m_+ \right)  $ not only lie in the 1-rarefaction wave curve $ R_-\left(v_{-}, u_{-}, \theta_{-}\right) $ and the 3-rarefaction wave curve $ R_+\left(v_{+}, u_{+}, \theta_{+}\right) $ respectively,
but also may correspondingly coincide with $ \left( v_-, u_-, \theta_- \right)  $ and $ \left( v_+, u_+, \theta_+ \right)  $.
Here
\begin{equation*}
R_{\pm}\left(v_{\pm}, u_{\pm}, \theta_{\pm}\right) = \left\{(v, u, \theta) \left|\; s=s_{\pm},\;\; u=u_{\pm}-\int_{v_{\pm}}^{v} \lambda_{\pm}\left(\eta, s_{\pm}\right) \mathrm{d} \eta,\; v>v_{\pm}   \right. \right\}.
\end{equation*}

The 1-rarefaction wave $ \left( v^r_-, u^r_-, \theta^r_- \right)(\frac{x}{t})  $ (\,respectively,\, the 3-rarefaction wave $ \left( v^r_+, u^r_+, \theta^r_+ \right)(\frac{x}{t}) $) connecting $ \left( v_-, u_-, \theta_- \right)  $ and $ \left( v^m_-, u^m, \theta^m_- \right) $
(respectively $ \left( v^m_+, u^m, \theta^m_+ \right)  $ and $ \left( v_+, u_+, \theta_+ \right) $) is the weak solution of the Riemann problem of the Euler system \eqref{nstuidao2} with the following Riemann initial data
\begin{equation}\label{rareriemanchuzhi}
(v^r_\pm, u^r_\pm, \theta^r_\pm)(x,0)=\left\{
\begin{aligned}
&(v^m_\pm, u^m, \theta^m_\pm), \quad  \pm x<0,\\[2mm]
&(v_\pm, u_\pm, \theta_\pm), \quad \;\pm x>0.
\end{aligned}
\right.
\end{equation}
Since the rarefaction waves $ (v^r_\pm, u^r_\pm, \theta^r_\pm)(\frac{x}{t}) $ are not smooth enough solutions, we shall construct smooth approximate ones. Motivated by \cite{Matsumura1986}, the smooth
solutions of Euler system \eqref{nstuidao2}, $ (V^r_\pm, U^r_\pm, \Theta^r_\pm)(x,t) $, which approximate $ (v^r_\pm, u^r_\pm, \theta^r_\pm)(\frac{x}{t}) $, are given by
\begin{equation}\label{gouzaoxishubo}
\left\{
\begin{aligned}
&\lambda_{\pm}\left(V_{\pm}^{r}(x, t), s_{\pm}\right) = w_{\pm}(x, 1+t) ,   \\[2mm]
&U_{\pm}^{r}(x, t) = u_{\pm}-\int_{v_{\pm}}^{V_{\pm}^{r}(x, t)} \lambda_{\pm}\left(\eta, s_{\pm}\right) \mathrm{d} \eta ,  \\[2mm]
&\Theta_{\pm}^{r}(x, t) = \theta_{\pm} v_{\pm}^{\gamma-1} \left( V_{\pm}^{r}(x, t) \right)^{1-\gamma} ,
\end{aligned}
\right.
\end{equation}
where $ w_- (x, t)$ (respectively $ w_+ (x, t)$) is the solution of the initial problem for the typical
Burgers equation:
\begin{equation}\label{wcauchy}
\left\{\begin{array}{l}
{w}_t+{w}{w}_x=0, \\[2mm]
\displaystyle {w}(x, 0) = \frac{w_{r} + w_{l}}{2}  +\frac{w_{r} - w_{l}}{2} \cdot\frac{\mathrm{e}^x-\mathrm{e}^{-x}}{\mathrm{e}^x+\mathrm{e}^{-x}},
\end{array}\right.
\end{equation}
with $ w_l = \lambda_-(v_-, s_-) $, $ w_r = \lambda_-(v^m_-, s_-) $ (respectively $ w_l = \lambda_+(v^m_+, s_+) $, $ w_r = \lambda_+(v_+, s_+) $).

Let $ \left( V^c, U^c, \Theta^c \right)(x,t) $ be the viscous contact wave constructed in \eqref{kuosanfangcheng} and \eqref{smoothjianduanbo} with $ (v_\pm, u_\pm, \theta_\pm) $ replaced by $ (v^m_\pm, u^m, \theta^m_\pm) $, and $ p_+ $ replaced by $ p^m $, respectively.
Now we define the smooth composite wave as
\begin{equation*}
\left(\begin{array}{l}
       V \\[2mm]
       U \\[2mm]
       \Theta
      \end{array}\right)(x, t)
= \left(\begin{array}{l}
    V^{c} + V_{-}^{r} + V_{+}^{r} \\[2mm]
    U^{c} + U_{-}^{r} + U_{+}^{r} \\[2mm]
    \Theta^{c} + \Theta_{-}^{r} + \Theta_{+}^{r}
    \end{array}\right)(x, t)
- \left(\begin{array}{c}
         v_{-}^{m} + v_{+}^{m} \\[2mm]
         2 u^{m} \\[2mm]
         \theta_{-}^{m} + \theta_{+}^{m}
        \end{array}\right)
\end{equation*}
and the perturbation as
\begin{equation*}
(\phi, \psi, \zeta)(x, t) = (v-V, u-U, \theta-\Theta)(x, t).
\end{equation*}
The following theorem is our second main result.

\begin{theorem}[Composite wave]\label{thm2}
Assume that the initial data satisfy $\eqref{wuqiong1}$, $\eqref{r1cr3youzhuangtai}$ for some small $ \tilde{\delta} > 0 $,
and the dielectric constant $\varepsilon$ satisfies
\begin{equation*}
0<\varepsilon<\bar{C}
\end{equation*}
 for some positive constant $\bar{C}$ $($depending only on $ v_\pm $, $ u_\pm $ and $ \theta_\pm $$)$. There exist two small positive constants $ \varepsilon_2 $ and $ \delta_2 $ $($$\leq \min \{\bar{\delta}, \tilde{\delta}\} $$)$ which are independent of $T$,
 such that if $ 0 < \delta < \delta_2 $ and the initial data $(v_0, u_0, \theta_0, E_0, b_0)(x)$ satisfy
\begin{equation*}
  \|\left( v_{0}(x)- V(x,0),\, u_{0}(x) - U(x,0),\,  \theta_0(x)- \Theta(x,0) \right) \|^2_{H^2}       + \|( E_{0}(x),\,  b_{0}(x)) \|_{H^1}^2 \leq \varepsilon_2,
\end{equation*}
then the Cauchy problem $\eqref{lagrange}$, $\eqref{chuzhi'}$, $\eqref{wuqiong1}$ admits a unique global solution $ (v,u,\theta,E,b)(x,t) $ satisfying $ \left( v - V, u - U, \theta - \Theta, E, b \right)(x,t) \in X\left( [0, \infty) \right) $ and
\begin{equation*} 
\sup_{t> 0}  \left(   \|(v - V,\, u - U,\, \theta - {\Theta})(\cdot,t)\|_{H^2}^2      + \|(E,\, b)(\cdot,t)\|_{H^1}^2  \right)   \leq  C_{0}\varepsilon_2.
\end{equation*}
Moreover, the solution $(v,u,\theta,E,b)(x,t)$ tends time-asymptotically to the composite wave in the sense that
\begin{equation}\label{dashijianxingwei2}
\lim _{t \rightarrow \infty} \sup _{x \in \mathbb{R}}
\left(\begin{array}{c}
\left|\left(v - V^{c} - v_{-}^{r} - v_{+}^{r} + v_{-}^{m} + v_{+}^{m}\right)(x, t)\right|   \\[2mm]
\left|\left(u - U^c - u_{-}^{r} - u_{+}^{r} + 2u^{m}\right)(x, t)\right|   \\[2mm]
\left|\left(\theta - \Theta^{c} - \theta_{-}^{r} - \theta_{+}^{r} + \theta_{-}^{m} + \theta_{+}^{m}\right)(x, t)\right|
\end{array}\right) = 0,
\end{equation}
where $(v^r_-, u^r_-, \theta^r_-)(x,t)$ and $(v^r_+, u^r_+, \theta^r_+)(x,t)$ are the 1-rarefaction and 3-rarefaction waves uniquely determined by the Riemann problem \eqref{nstuidao2}, \eqref{rareriemanchuzhi}, respectively.
\end{theorem}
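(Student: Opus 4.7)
The plan is to prove Theorem \ref{thm2} by the standard continuation argument: local existence plus uniform a priori estimates yields a global solution, and the a priori estimates themselves force the $L^\infty$ decay \eqref{dashijianxingwei2}. Setting $(\phi,\psi,\zeta)=(v-V,u-U,\theta-\Theta)$ and writing the perturbation system obtained by subtracting the approximate composite profile from \eqref{lagrange}, the right-hand sides will collect the truncation errors of the viscous contact wave (exponentially localized, controlled by Lemma \ref{jianduanboshuaijian}), the truncation errors of the approximate rarefaction waves (decaying polynomially in $t$ with the known bounds for $(V^r_\pm,U^r_\pm,\Theta^r_\pm)$), and the wave interaction terms which decay at an algebraic rate in $(1+t)$ due to the spatial separation of the three wave families. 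Together with the initial smallness hypothesis, local existence in $X([0,T_0])$ is a standard fixed-point argument, so the whole game is to close uniform a priori estimates on an arbitrary interval $[0,T]$ under the smallness ansatz $\sup_{[0,T]}(\|(\phi,\psi,\zeta)\|_{H^2}+\|(E,b)\|_{H^1})\le\nu\ll1$.

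I would carry out the energy estimates in the same order as in the viscous contact case sketched in the introduction, inserting rarefaction-wave corrections. \textbf{Step 1 (zeroth order).} Multiplying the $\phi$--equation by an entropy-like multiplier, the $\psi$--equation by $\psi$, the $\zeta$--equation by $\zeta/\bar\theta$, and the Maxwell pair $\eqref{lagrange}_{4,5}$ by $\varepsilon vE$ and $vb$, the bad zeroth-order magnetic term $-\int\!\!\int v(E+\psi b+\bar u b)\psi b$ combines with the Joule heating and Maxwell contributions to produce the compound dissipative term $\int_0^t\!\!\int v(E+\psi b+Ub)^2\,dx\,d\tau$, exactly as in the contact case; the extra rarefaction weight $(U^r_-)_x+(U^r_+)_x\ge0$ yields the additional positive good term $\tfrac12\int_0^t\!\!\int[(U^r_-)_x+(U^r_+)_x](\varepsilon E^2+b^2)\,dx\,d\tau$, which absorbs most rarefaction-induced cross terms. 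The smallness assumption \eqref{jiedianchangshuxiao} on $\varepsilon$ is used precisely here to absorb the $\varepsilon U^2$-type nonlinear pieces into $\int\tfrac12(\varepsilon vE^2+vb^2)\,dx$, as foreshadowed in \eqref{tuidao2'} and \eqref{jibennengliang''}. \textbf{Step 2 (first order in $\psi,\zeta$).} Multiplying the differentiated momentum and energy equations by $\psi_x/\bar\theta$ and $\zeta_x/\bar\theta^2$ and using $v_t=u_x$ to control $\phi_x$ à la Kanel, I obtain $\int_0^t\!\!\int(\phi_x^2+\psi_x^2+\zeta_x^2)\,dx\,d\tau$ modulo error terms; those with a $\bar u_x$ weight from the contact wave are handled by the Huang–Li–Matsumura heat-kernel estimate (Lemma \ref{heatkernel}) applied to the transformed identity $(vb)_t-(E+\psi b+Ub)_x=0$ coming from $\eqref{lagrange}_5$, which is the key Lagrangian observation. \textbf{Step 3 (first order in $E,b$).} Differentiating the Maxwell system and pairing $E_x$ with $\eqref{lagrange}_{4,x}$, $b_x$ with $\eqref{lagrange}_{5,x}$ yields $\|(\sqrt{\varepsilon}E_x,b_x)\|^2+\int_0^t\|E_x\|^2\,d\tau$; here the nonlinear terms $\int(|\phi_x|+|\psi_x|)(E_x^2+b_x^2)\,dx$ are dominated using $\|\phi_x\|_{L^\infty}\lesssim\|\phi\|_{H^2}^{1/2}\|\phi_x\|_{H^1}^{1/2}\ll1$. \textbf{Step 4 (second order).} Differentiating once more and re-running the same pairings yields $\|(\phi_{xx},\psi_{xx},\zeta_{xx})\|^2$ plus their dissipative counterparts; the new nonlinear commutators are closed by the $H^2$ smallness of $(\phi,\psi,\zeta)$ and the $H^1$ smallness of $(E,b)$.

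\textbf{Step 5 (asymptotic behavior).} Summing Steps 1--4 gives, for all $T$ and uniformly in $\varepsilon\in(0,\bar C)$,
\begin{equation*}
\|(\phi,\psi,\zeta)(\cdot,t)\|_{H^2}^2+\|(E,b)(\cdot,t)\|_{H^1}^2+\!\int_0^t\!\!\Bigl(\|(\phi_x,\psi_x,\zeta_x,E,E_x,b_x)\|^2+\!\!\int\!v(E+\psi b+Ub)^2\,dx\Bigr)d\tau\le C\varepsilon_2,
\end{equation*}
which both closes the continuity argument (global existence in $X([0,\infty))$) and provides the $L^1_t$ integrability of $\tfrac{d}{dt}\|(\phi_x,\psi_x,\zeta_x)\|^2$ needed for the standard lemma that $\|(\phi_x,\psi_x,\zeta_x,E_x,b_x)\|\to0$; interpolation with uniform $H^2$/$H^1$ bounds then gives $\|(\phi,\psi,\zeta,E,b)\|_{L^\infty}\to0$, which combined with the known $L^\infty$ behavior of $V^c-v^m_\pm$ and $V^r_\pm-v^m_\pm$ yields \eqref{dashijianxingwei2}.

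\textbf{Main obstacle.} I expect the hardest part to be Step 1 in the composite-wave setting: one must simultaneously (i) exploit the Lagrangian Maxwell identity to assemble the compound good term $\int v(E+\psi b+Ub)^2\,dx$, (ii) absorb the indefinite-sign $\bar u_x$ contributions from the contact wave via heat-kernel estimates, (iii) tolerate the rarefaction cross terms without destroying the absorption constant, and (iv) keep the constant $\bar C$ depending only on $v_\pm,u_\pm,\theta_\pm$ so that \eqref{jiedianchangshuxiao} is preserved. The wave-interaction estimates between the contact wave and the two rarefaction waves, which decay like $(1+t)^{-1}$ in the region where two waves overlap, must be delicate enough that their time integrals remain $O(\delta)$; this is where borrowing the interaction bounds of Huang–Li–Matsumura \cite{huangfm2010} and the Maxwell modifications from \cite{yaozhu2021} is essential.
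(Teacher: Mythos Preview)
Your proposal is correct and follows essentially the same approach as the paper: the continuation argument built on local existence plus uniform a priori estimates, with the zero-order step combining the Kawashima--Matsumura--Nishihara entropy multipliers with the Maxwell structure to produce the compound good term $\int_0^t\int v(E+\psi b+Ub)^2\,dx\,d\tau$, the Lagrangian identity $(vb)_t-(E+ub)_x=0$ feeding into the heat-kernel Lemma~\ref{heatkernel} to handle the indefinite-sign $U^c_x$ contributions, the positive rarefaction weight $(U^r_-)_x+(U^r_+)_x$ absorbing rarefaction cross terms, and the $H^2$ a priori assumption closing the higher-order nonlinearities exactly as you outline. The only cosmetic difference is that the paper organizes the $\phi_x$ (Kanel) estimate as a separate lemma preceding the $(\psi_x,\zeta_x)$ estimate rather than lumping them together, and writes the time-integrable electromagnetic dissipation as $\|E+\psi b+Ub\|^2$ rather than $\|E\|^2$ (your list in Step~5 should drop the standalone $E$).
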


\begin{remark}\label{remark3}
From \eqref{vepu-2'}, we can take the constant $ \bar{C} $ as
\begin{equation*}
 \bar{C}  =    \min \left\{ \frac{\min\{v_\pm\}}{ 80 \max\{v_\pm\}  \cdot  \left( \max \left\{ \left| u_\pm \right|  \right\} \right)^2 } , \;\;        \frac{   \left( \sqrt{\gamma R} \max \left\{   \sqrt{\theta_+} v_+ ^{-1},\, \sqrt{\theta_-} v_- ^{-1}   \right\} \right)^{-1}  }{   32 \max \left\{ v_\pm \right\} \cdot  \max \left\{ \left| u_\pm \right| \right\} }  \right\} .
\end{equation*}
Like the discussion on Remark \ref{remark1}, when we take $ \max \left\{ \left| u_- \right|, \left| u_+ \right|  \right\} $ suitably small, the dielectric constant $ \varepsilon  $ can be large enough, and vice versa.

\end{remark}

\begin{remark}\label{remark4}
For the compressible non-isentropic Navier-Stokes-Maxwell equations,
the stability of the remaining wave patterns, namely, shock wave, stationary wave and their compositions with viscous contact wave or rarefaction waves can also be taken into account, which will be studied by the authors in future.
\end{remark}

The rest of the paper is organized as follows.
In section \ref{section2}, we will
list the elementary inequality which is the key point for analysing viscous contact wave;
and collect some fundamental facts concerning viscous contact wave and rarefaction waves which are necessary in our subsequent analysis.
The proof of Theorem \ref{thm1} and Theorem \ref{thm2} are given in section \ref{section3} and section \ref{section4}, respectively.


\section{Preliminaries}\label{section2}
We first list a basic inequality concerning the time-space integrable estimation with the square of the heat kernel as a weight function,
which will play an important role in the sequel.
This result is borrowed from \cite{huangfm2010} and we skip the details here.
For $ \alpha > 0 $, we define
\begin{equation}\label{omegarehe}
  \omega(x,t)=\left( 1+t \right)^{-\frac{1}{2}}\mathrm{e}^{-\frac{\alpha x^2}{1+t}},\qquad g(x,t) = \int_{-\infty}^x \omega(y,t)\,\mathrm{d}y,
\end{equation}
and can easily check that
\begin{equation}\label{gtguanxi}
  \omega_t = \frac{1}{4\alpha} \omega_{xx},  \quad  4 \alpha g_t = \omega_x, \quad \| g(\cdot,t) \|_{L^\infty} = \sqrt{\pi} \alpha^{-\frac{1}{2}}.
\end{equation}

\begin{lemma}\label{heatkernel}
  For $0 < T \leq +\infty$, suppose that $ h(x,t) $ satisfies
\begin{equation*}
 h_x \in L^2(0,T; L^2(\mathbb{R})),\quad h_t \in L^2(0,T; H^{-1}(\mathbb{R})).
\end{equation*}
Then the following estimate holds:
\begin{equation}\label{reheguji}
  \int_0^T\int_{\mathbb{R}} h^2 \omega^2 \,\mathrm{d}x \mathrm{d}t    \leq  4 \pi \| h_0 \|^2    +  \frac{4 \pi}{\alpha} \int_0^T \| h_x \|^2 \,\mathrm{d}t      + 8 \alpha \int_0^T \left< h_t, h g^2 \right> \,\mathrm{d}t,
\end{equation}
where $\left<\;\cdot\;, \;\cdot\; \right>$ denotes the inner product on $H^{-1}(\mathbb{R}) \times H^1(\mathbb{R})$.

\end{lemma}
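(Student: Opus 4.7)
The plan is to multiply the unknown $h$ by the antiderivative $g$, form the quantity $h^2 g^2$, and compute its time derivative so as to produce $h^2\omega^2$ as a good term with the correct sign. More precisely, I would start from
\begin{equation*}
\frac{d}{dt}\int_{\mathbb{R}} h^2 g^2 \,\mathrm{d}x = 2\left<h_t, hg^2\right> + 2\int_{\mathbb{R}} h^2 g g_t \,\mathrm{d}x,
\end{equation*}
and then use the identity $4\alpha g_t = \omega_x$ from \eqref{gtguanxi} in the second term on the right-hand side. After rewriting $2\int h^2 g g_t\,\mathrm{d}x = \frac{1}{2\alpha}\int h^2 g\,\omega_x\,\mathrm{d}x$ and integrating by parts in $x$ (using $g_x=\omega$), this term becomes $-\frac{1}{\alpha}\int h h_x g\omega\,\mathrm{d}x - \frac{1}{2\alpha}\int h^2 \omega^2\,\mathrm{d}x$, which is exactly where the desired space-time good term $\int h^2\omega^2\,\mathrm{d}x$ is generated.

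The next step is to move the good term to the left and control the cross term $\frac{1}{\alpha}\int h h_x g\omega\,\mathrm{d}x$ by Cauchy--Schwarz, keeping enough of the good term to absorb the result. A convenient choice gives
\begin{equation*}
\frac{1}{\alpha}\left|\int_{\mathbb{R}} h h_x g\omega\,\mathrm{d}x\right| \le \frac{1}{4\alpha}\int_{\mathbb{R}} h^2\omega^2\,\mathrm{d}x + \frac{1}{\alpha}\int_{\mathbb{R}} h_x^2 g^2\,\mathrm{d}x,
\end{equation*}
and then the uniform bound $\|g(\cdot,t)\|_{L^\infty}^2 = \pi/\alpha$ from \eqref{gtguanxi} converts the last piece into $\frac{\pi}{\alpha^2}\|h_x\|^2$. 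Integrating in $t$ over $[0,T]$, the time-boundary contribution on $[0,T]$ is $\int h_0^2 g^2(x,0)\,\mathrm{d}x - \int h^2(x,T) g^2(x,T)\,\mathrm{d}x$, whose positive piece is dropped and whose negative piece is controlled by $\frac{\pi}{\alpha}\|h_0\|^2$.

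Collecting everything, the inequality reads
\begin{equation*}
\frac{1}{4\alpha}\int_0^T\!\!\int_{\mathbb{R}} h^2\omega^2\,\mathrm{d}x\mathrm{d}t \le \frac{\pi}{\alpha}\|h_0\|^2 + 2\int_0^T \left<h_t, hg^2\right>\mathrm{d}t + \frac{\pi}{\alpha^2}\int_0^T\|h_x\|^2\,\mathrm{d}t,
\end{equation*}
and multiplying by $4\alpha$ gives precisely \eqref{reheguji}. The only subtle point is the meaning of $\left<h_t, hg^2\right>$ when $h_t$ only lies in $H^{-1}$: since $g$ and $g_x=\omega$ are smooth and bounded in $x$, the test function $hg^2$ belongs to $H^1$ whenever $h\in H^1$, so the duality pairing is well defined. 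The main obstacle is therefore not the functional setup but finding the right weighting that turns the routine identity $\frac{d}{dt}\int h^2 g^2\,\mathrm{d}x$ into the $h^2\omega^2$ estimate via the key algebraic relation $4\alpha g_t = \omega_x$; once this is seen, the remaining steps are purely Cauchy--Schwarz and absorption.
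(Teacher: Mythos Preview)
Your proof is correct and is essentially the standard argument from \cite{huangfm2010}. Note that the paper itself does not give a proof of this lemma; it states that the result is borrowed from \cite{huangfm2010} and skips the details, so there is nothing further to compare.
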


Next, the $ L^2 $-norm of the viscous contact wave $ \left( \bar{v}, \bar{u}, \bar{\theta} \right)  $ defined by \eqref{smoothjianduanbo} are useful in the subsequent sections.

\begin{lemma}\label{jianduanbol2mo}
Set the strength of wave $ \delta : = \left| \theta_+ - \theta_- \right| \leq  \bar{\delta} $, where $\bar{\delta}$ is a small positive constant in Lemma \ref{jianduanboshuaijian}.
Then for $ k \geq 1 $, the viscous contact wave $ \left( \bar{v}, \bar{u}, \bar{\theta} \right)  $ has the following $ L^2 $-estimates:
\begin{itemize}
\item[$\mathrm{(i)}$]
\begin{equation*}
  \| \left( \partial_x^k \bar{v},\, \partial_x^k \bar{\theta} \right) \|  \leq C \delta \left( 1+t \right)^{-\frac{2k - 1}{4}}.
\end{equation*}

\item[$\mathrm{(ii)}$]
\begin{equation*}
   \| \partial_x^k \bar{u} \| \sim \| \partial_x^{k+1} \bar{\theta}\|  \leq C \delta \left( 1+t \right)^{-\frac{2k + 1}{4}}.
\end{equation*}

 \item[$\mathrm{(iii)}$]
\begin{equation*}
    \| \partial_t^k \bar{u} \| \sim \| \partial_x^{2k+1} \bar{\theta}\|  \leq C \delta \left( 1+t \right)^{-\frac{4k + 1}{4}} .
\end{equation*}

\end{itemize}

\end{lemma}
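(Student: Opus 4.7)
The plan is to use the pointwise decay estimate \eqref{shuaijianlv1} as the single input and derive all three estimates from it by direct calculation, using the constitutive relations in \eqref{smoothjianduanbo} together with the nonlinear diffusion equation \eqref{kuosanfangcheng}.

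For part (i), since $\bar v = (R/p_+)\bar\theta$, it suffices to bound $\|\partial_x^k\bar\theta\|$. From \eqref{shuaijianlv1} one has the pointwise inequality
\begin{equation*}
|\partial_x^k\bar\theta(x,t)|^2 \leq c_1^2\delta^2(1+t)^{-k}\,\mathrm{e}^{-\tfrac{2\hat c x^2}{1+t}}.
\end{equation*}
Integrating in $x$ and using the Gaussian identity $\int_{\mathbb{R}} \mathrm{e}^{-2\hat c x^2/(1+t)}\,\mathrm{d}x = C(1+t)^{1/2}$ gives $\|\partial_x^k\bar\theta\|^2 \leq C\delta^2(1+t)^{-k+1/2}$, which is the claim after taking square roots. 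The estimate for $\partial_x^k \bar v$ is immediate by scaling.

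For part (ii), I will differentiate the identity $\bar u = u_- + \frac{\kappa(\gamma-1)}{\gamma R}\,\bar\theta_x/\bar\theta$ using Leibniz's rule. The leading contribution to $\partial_x^k\bar u$ is $\frac{\kappa(\gamma-1)}{\gamma R}\,\partial_x^{k+1}\bar\theta/\bar\theta$, while every remaining summand is a product of the form $\bar\theta^{-j}\prod_i\partial_x^{k_i}\bar\theta$ with $\sum k_i = k+1$, $k_i\geq 1$. Because $\bar\theta$ is bounded below by a positive constant depending only on $\theta_\pm$, and because each such product enjoys pointwise Gaussian decay of the same form as $\partial_x^{k+1}\bar\theta$ (the lower-order factors only provide additional smallness in $\delta$), the quantity $|\partial_x^k\bar u|$ is controlled pointwise by $C|\partial_x^{k+1}\bar\theta|$; conversely, isolating the top-derivative term shows $|\partial_x^{k+1}\bar\theta|\leq C(|\partial_x^k\bar u|+\text{lower order})$, so the two $L^2$ norms are equivalent up to absorbable remainders. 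Applying part (i) with index $k+1$ yields the claimed bound $C\delta(1+t)^{-(2k+1)/4}$.

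For part (iii), I will convert time derivatives into spatial ones via the diffusion equation \eqref{kuosanfangcheng}, namely $\bar\theta_t = a(\bar\theta_x/\bar\theta)_x$. Differentiating in $t$ repeatedly, one shows inductively that $\partial_t^k\bar\theta$ is a rational combination of $\bar\theta$ and its spatial derivatives up to order $2k$, whose dominant contribution behaves like $a^k\,\partial_x^{2k}\bar\theta/\bar\theta^{k-1}$ after simplification. Combining with $\bar u_t$-type computations from the relation in (ii), $\partial_t^k \bar u$ reduces in the same way to derivatives $\partial_x^j\bar\theta$ with $j\leq 2k+1$, the top-order term being proportional to $\partial_x^{2k+1}\bar\theta$. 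Applying part (i) with index $2k+1$ then produces the rate $(1+t)^{-(4k+1)/4}$. Alternatively, one can exploit the self-similar form $\bar\theta=\bar\theta(x/\sqrt{1+t})$ to read off $\partial_t$ in terms of $\partial_x$, at the cost of one extra power of $(1+t)^{-1/2}$ per time derivative after using $|\xi\bar\theta'(\xi)|\leq C\delta$.

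The only non-routine point is the bookkeeping in parts (ii) and (iii), where one must check that all lower-order products arising from Leibniz's rule or from iterating \eqref{kuosanfangcheng} decay at least as fast as the top-order term; this follows because each additional factor of $\partial_x\bar\theta$ contributes an extra $(1+t)^{-1/2}$ in the pointwise bound from \eqref{shuaijianlv1}, so lower-order terms are in fact strictly faster-decaying and absorbable. No further ingredients beyond Lemma \ref{jianduanboshuaijian} and \eqref{smoothjianduanbo}--\eqref{kuosanfangcheng} are needed.
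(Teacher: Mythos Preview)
The paper states this lemma without proof, treating it as an elementary consequence of Lemma~\ref{jianduanboshuaijian} and the definitions in \eqref{smoothjianduanbo}. Your approach is correct and is exactly the standard derivation one would expect: integrate the pointwise Gaussian bound \eqref{shuaijianlv1} for part~(i), then reduce (ii) and (iii) to (i) via the constitutive relation $\bar u-u_-=\tfrac{\kappa(\gamma-1)}{\gamma R}\bar\theta_x/\bar\theta$ and the diffusion equation \eqref{kuosanfangcheng}, checking that the lower-order Leibniz terms carry extra powers of $\delta(1+t)^{-1/2}$ and are therefore absorbable.
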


Moreover, by using the Sobolev inequality
\begin{equation}\label{lwuqiong}
\|f\|_{\infty}\leq \sqrt{2}\|f\|^\frac{1}{2}\|f_x\|^\frac{1}{2},\quad {\rm for}\,\,f(x)\in H^1(\mathbb{R}),
\end{equation}
together with the {\it a priori} assumption \eqref{xianyanjiashe} for a single viscous contact wave (respectively for the composite wave case),
it is easy to deduce the perturbation of the solution satisfying
\begin{equation}\label{raodongwuqiongguji}
\|(\phi, \psi, \zeta, E, b, \phi_x, \psi_x, \zeta_x)\|_{L^{\infty}}\leq \sqrt{2}\varepsilon_0.
\end{equation}
Due to $\eqref{raodongwuqiongguji}$, \eqref{smoothjianduanbo} (respectively \eqref{gouzaoxishubo}) and the smallness of $\varepsilon_0$, we can get the following useful results.
\begin{lemma}\label{changyongshangxiajie}
For a single viscous contact wave case, it holds that
\begin{itemize}
\item[$\mathrm{(i)}$]
If $ \theta_- < \theta_+ $, then
\begin{equation*}
  \theta_- \leq  \bar{\theta}(x,t)  \leq \theta_+, \quad  v_-  \leq  \bar{v}(x,t) \leq  v_+ .
\end{equation*}
If $ \theta_+ < \theta_- $, then
\begin{equation*}
  \theta_+ \leq  \bar{\theta}(x,t)  \leq \theta_-, \quad  v_+  \leq  \bar{v}(x,t) \leq  v_- .
\end{equation*}

\item[$\mathrm{(ii)}$] If $ \delta = \left| \theta_+ - \theta_- \right|  $ is small enough, then
\begin{gather}
0<\frac{1}{2}\min\{v_\pm\} \leq v = \phi + \bar{v}  \leq \frac{3}{2}\max\{v_\pm\},      \nonumber  \\[2mm]
0<\frac{1}{2}\min\{\theta_\pm\} \leq \theta = \zeta + \bar{\theta}  \leq \frac{3}{2}\max\{\theta_\pm\},      \nonumber  \\[2mm]
\left| \bar{u} \right| \leq  \frac{5}{4} \left| u_- \right|,    \quad  |u|\leq |\psi| + |\bar{u}|  \leq \frac{3}{2}|u_-| .   \label{contactushangjie}
\end{gather}

\end{itemize}

For the composite wave case, it holds that
\begin{itemize}
\item[$\mathrm{(iii)}$]
\begin{equation*}
   0 < v_\pm < V^r_\pm(x,t) < v^m_\pm ,\quad
   0 < \theta^m_\pm < \Theta^r_\pm(x,t) < \theta_\pm .
\end{equation*}

\item[$\mathrm{(iv)}$]
\begin{equation*}
  u_- < U^r_-(x,t) < u^m, \quad u^m < U^r_+(x,t) < u_+ .
\end{equation*}

\item[$\mathrm{(v)}$]
If $ \theta^m_-  < \theta^m_+ ,$ then
\begin{equation*}
 \theta^m_- < \Theta^c(x,t) < \theta^m_+ , \quad v^m_- < V^c(x,t) < v^m_+ .
\end{equation*}

If $ \theta^m_+  < \theta^m_- ,$ then
\begin{equation*}
  \theta^m_+ < \Theta^c(x,t) < \theta^m_- , \quad  v^m_+ < V^c(x,t) < v^m_- .
\end{equation*}

\item[$\mathrm{(vi)}$] If $ \delta = \left| \theta_+ - \theta_- \right| $ is small enough, then
\begin{align}
\frac{1}{2} \min\left\{ v_\pm \right\} < V < \frac{3}{2} \max \left\{ v_\pm \right\},&   \quad    \frac{1}{4} \min\left\{ v_\pm \right\} < v < 2 \max \left\{ v_\pm \right\},   \label{fuhevshangjie}    \\[2mm]
\frac{1}{2} \min\left\{ \theta_\pm \right\} < \Theta < \frac{3}{2} \max \left\{ \theta_\pm \right\},&  \quad   \frac{1}{4} \min\left\{ \theta_\pm \right\} < \theta < 2 \max \left\{ \theta_\pm \right\},       \nonumber  \\[2mm]
\left| U \right| \leq \frac{3}{2} \max \left\{ \left| u_\pm \right| \right\},&   \quad    \left| u \right| \leq 2 \max \left\{ \left| u_\pm \right| \right\} .   \label{Uushangxiajie}
\end{align}

\end{itemize}

\end{lemma}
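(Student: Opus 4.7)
The plan is to reduce each pointwise bound to the explicit construction of the profiles in \eqref{smoothjianduanbo} and \eqref{gouzaoxishubo}--\eqref{wcauchy}, combined with the decay estimates \eqref{shuaijianlv1} and the Sobolev bound \eqref{raodongwuqiongguji}, and then to exploit the smallness of $\delta$ and $\varepsilon_0$ to absorb lower-order errors. I would treat the single contact wave statements (i)--(ii) first, then the rarefaction components (iii)--(iv), then the two-sided contact piece (v), and finally combine them in (vi).

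For (i) I would invoke Lemma \ref{jianduanboshuaijian}(i), which gives monotonicity of $\bar\theta(x/\sqrt{1+t})$ connecting $\theta_-$ and $\theta_+$; the two sub-cases then follow immediately. The bound on $\bar v$ then drops out of the algebraic identity $\bar v = R\bar\theta/p_+$ in \eqref{smoothjianduanbo}, since this is a positive affine transformation and hence monotonicity and endpoint values are preserved. For (ii) I would insert the explicit formula $\bar u = u_- + \frac{\kappa(\gamma-1)}{\gamma R}\cdot\bar\theta_x/\bar\theta$, apply the pointwise decay $|\bar\theta_x|\le c_1\delta$ from \eqref{shuaijianlv1} and the lower bound $\bar\theta\ge\tfrac12\min\{\theta_\pm\}$ from (i), and choose $\delta$ small enough that the resulting correction is strictly dominated by $\tfrac14|u_-|$, which yields $|\bar u|\le\tfrac54|u_-|$. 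The bounds on $v=\phi+\bar v$, $\theta=\zeta+\bar\theta$, and $u=\psi+\bar u$ then follow by triangle inequality together with \eqref{raodongwuqiongguji} and the smallness of $\varepsilon_0$.

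For the composite wave case, (iii)--(iv) I would derive from the Burgers construction in \eqref{wcauchy}: the $\tanh$-type initial data is monotone and takes values strictly between $w_l$ and $w_r$, and this property is preserved by Burgers' equation along characteristics (no shock forms since $w_l<w_r$). Composing with the monotone change of variables $\lambda_\pm(\cdot,s_\pm)$ in \eqref{gouzaoxishubo} transfers the bounds to $V^r_\pm$, and the defining integrals give (iv) for $U^r_\pm$ and the range for $\Theta^r_\pm$. For (v) I would apply (i) verbatim, now with the middle states $(v^m_\pm, u^m, \theta^m_\pm)$ in place of $(v_\pm, u_\pm, \theta_\pm)$, using the middle-state pressure identity $R\theta^m_\pm/v^m_\pm = p^m$.

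Finally, for (vi), I would write $V = V^c + (V^r_- - v^m_-) + (V^r_+ - v^m_+)$ and similarly for $U$ and $\Theta$. From (iii) the parenthesized terms lie in the intervals $[v_\pm - v^m_\pm,\,0]$, whose lengths are $O(\delta)$ by the estimate $|v^m_\pm - v_\pm|\le C_1\delta$ from the connection property following \eqref{r1cr3youzhuangtai}; combined with (v) this pins $V$ between $\tfrac12\min\{v_\pm\}$ and $\tfrac32\max\{v_\pm\}$ for $\delta$ small. The bound on $v$ follows by adding $\phi$ and using \eqref{raodongwuqiongguji}; analogous arguments handle $\Theta, \theta, U, u$, giving \eqref{fuhevshangjie}--\eqref{Uushangxiajie}. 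The only slightly delicate step is ensuring that the errors $|v^m_\pm - v_\pm|$, $|u^m - u_\pm|$, $|\bar\theta_x/\bar\theta|$, and $\|(\phi,\psi,\zeta)\|_{L^\infty}$ can all be simultaneously absorbed into the prescribed numerical constants $\tfrac12$, $\tfrac32$, $\tfrac54$, etc., which is purely a matter of choosing $\delta_1,\delta_2,\varepsilon_0$ sufficiently small.
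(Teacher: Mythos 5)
Your proposal is correct and follows essentially the same route the paper (implicitly) takes: the lemma is a direct consequence of the explicit constructions \eqref{smoothjianduanbo} and \eqref{gouzaoxishubo}--\eqref{wcauchy}, the monotonicity statements in Lemma \ref{jianduanboshuaijian} and Lemma \ref{burgerxingzhi}, the decay bound \eqref{shuaijianlv1}, the $O(\delta)$ closeness of the middle states, and the $L^\infty$ bound \eqref{raodongwuqiongguji}, with $\delta$ and $\varepsilon_0$ taken small to absorb the errors into the stated numerical constants. The only caveat worth recording is that the bounds $\left|\bar u\right|\leq\frac{5}{4}\left|u_-\right|$ and $\left|u\right|\leq\frac{3}{2}\left|u_-\right|$ require $\delta$ and $\varepsilon_0$ small relative to $\left|u_-\right|$ and hence tacitly assume $u_-\neq 0$, consistent with the case split made in Remark \ref{remark1}.
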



In addition, we state the following properties of the solution to the problem \eqref{wcauchy} according to \cite{Matsumura1986}.

\begin{lemma}\label{burgerxingzhi}
For given $w_l\in\mathbb{R}$ and $\bar w>0$, let $w_r \in \{w\left|\;0 < \tilde w := w - w_l < \bar w \right.\}$.
Then the problem \eqref{wcauchy} has a unique smooth global solution in time satisfying the following properties:
\begin{itemize}
\item[$\mathrm{(i)}$] $w_l<w(x,t)<w_r$, \, $w_x>0$ $(x\in\mathbb{R},t>0)$.
\item[$\mathrm{(ii)}$] For $ q \in[1,\infty]$, there exists some positive constant $C = C(q,w_l,\bar w)$ such that
for $\tilde w\geq0$ and $t\geq0$,
$$
\|w_x(\cdot,t)\|_{L^q}\leq C\min\{\tilde w,\tilde w^{\frac{1}{q}}t^{-1 + \frac{1}{q}}\}, \quad
\|w_{xx}(\cdot,t)\|_{L^q}\leq C\min\{\tilde w,t^{-1}\}.
$$
\item[$\mathrm{(iii)}$] If $w_l>0$, for any $(x,t)\in(-\infty,0]\times[0,\infty)$,
$$
|w(x,t)-w_l|\leq\tilde w \mathrm{e}^{-2(|x|+w_lt)},\quad
|w_x(x,t)|\leq 2\tilde w \mathrm{e}^{-2(|x|+w_lt)}.
$$
\item[$\mathrm{(iv)}$] If $w_r<0$, for any $(x,t)\in[0,\infty)\times[0,\infty)$,
$$
|w(x,t)-w_r|\leq\tilde w \mathrm{e}^{-2(x + |w_r|t)},\quad
|w_x(x,t)|\leq2\tilde w \mathrm{e}^{-2(x + |w_r|t)}.
$$
\item[$\mathrm{(v)}$]
For the Riemann solution $w^r({x}/{t})$ of the scalar equation $\eqref{wcauchy}_1$ with the Riemann initial data
\begin{equation*}
w(x,0)=\left\{
\begin{array}{ll}
w_l,&\quad x<0,\\[2mm]
w_r,&\quad x>0,
\end{array}
\right.
\end{equation*}
we have
$$
\lim_{t\rightarrow+\infty}\sup_{x\in\mathbb{R}}|w(x,t)-w^r({x}/{t})|=0.
$$
\end{itemize}

\end{lemma}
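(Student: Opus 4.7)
The plan is to prove Lemma~\ref{burgerxingzhi} by the classical method of characteristics, which is fully tractable here because the initial datum
\[
w_0(x) := \frac{w_r+w_l}{2} + \frac{w_r-w_l}{2} \cdot \frac{\mathrm{e}^x-\mathrm{e}^{-x}}{\mathrm{e}^x+\mathrm{e}^{-x}}
\]
is smooth, strictly increasing, bounded between $w_l$ and $w_r$, and approaches $w_l, w_r$ at exponential rate $\mathrm{e}^{-2|x|}$ at $\mp\infty$. First I would introduce the characteristics $X(t;\xi) := \xi + w_0(\xi) t$, along which $w(X(t;\xi),t) = w_0(\xi)$ is conserved; the strict monotonicity $w_0' > 0$ yields $\partial_\xi X = 1 + t w_0'(\xi) \geq 1$, so $\xi \mapsto X(t;\xi)$ is a global smooth diffeomorphism of $\mathbb{R}$ for every $t \geq 0$. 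This produces a unique global smooth solution and gives item~(i) directly.

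Next, differentiating $w(X(t;\xi),t) = w_0(\xi)$ in $\xi$ yields the pointwise formula
\[
w_x(x,t) = \frac{w_0'(\xi)}{1 + t w_0'(\xi)},
\]
and an analogous expression for $w_{xx}$. To prove item~(ii), I would change variables $x = X(t;\xi)$ with Jacobian $1 + t w_0'(\xi)$ to rewrite
\[
\|w_x(\cdot,t)\|_{L^q}^q = \int_{\mathbb{R}} \frac{(w_0'(\xi))^q}{(1+t w_0'(\xi))^{q-1}}\,\mathrm{d}\xi,
\]
and then split the integral into a bulk rarefaction zone where $t w_0'(\xi) \gtrsim 1$, producing the time-decay branch $\tilde w^{1/q} t^{-1+1/q}$, and two tail zones where $w_0'(\xi) \lesssim \tilde w\,\mathrm{e}^{-2|\xi|}$, producing the stationary branch $\tilde w$. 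The $L^q$ bound on $w_{xx}$ is analogous, using the explicit expression for $w_{xx}$ in terms of $w_0', w_0''$ and the same change of variables.

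For items~(iii)--(iv) I would exploit the finite characteristic speed at the tails. Assuming $w_l > 0$, any $(x,t)$ with $x\leq 0$ is reached by a characteristic starting from some $\xi$ with $\xi = x - w_0(\xi) t \leq x - w_l t \leq -(|x| + w_l t)$; combining this with the exponential tail bounds $|w_0(\xi) - w_l|, |w_0'(\xi)| \lesssim \tilde w\,\mathrm{e}^{2\xi}$ as $\xi \to -\infty$ yields the claimed exponential estimates, and~(iv) is symmetric with $\xi \to +\infty$. For~(v), fixing $\eta = x/t$ and passing to the limit $t \to \infty$ in the implicit relation $\xi/t + w_0(\xi) = \eta$ forces $w_0(\xi) \to \eta$ whenever $w_l < \eta < w_r$ and $\xi \to \mp\infty$ on the plateaus $\eta \leq w_l$ or $\eta \geq w_r$, where the exponential bounds from~(iii)--(iv) take over; uniform convergence to $w^r(\eta)$ then follows by continuity and a compactness argument on $\eta$.

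I expect the main obstacle to be the sharp bifurcated form $\min\{\tilde w, \tilde w^{1/q} t^{-1+1/q}\}$ in item~(ii): the $\xi$-split must be performed at the correct scale (the effective rarefaction zone has width $\sim \log(1+t)$ because of the $\tanh$ profile), and the Jacobian $1 + t w_0'(\xi)$ must be tracked carefully in both regions so that each branch of the minimum is actually attained, not only an upper envelope of the two. The remaining items reduce to routine characteristic analysis, explicit ODE estimates, and the standard Riemann-solution construction.
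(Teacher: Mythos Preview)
Your proposal is correct and follows the standard method-of-characteristics argument. The paper itself does not prove this lemma at all: it simply states the result and attributes it to Matsumura--Nishihara \cite{Matsumura1986}, writing ``we state the following properties of the solution to the problem \eqref{wcauchy} according to \cite{Matsumura1986}.'' Your outline is precisely the approach used in that reference (and in the closely related literature), so there is no genuine methodological divergence to discuss---you have supplied a proof sketch where the paper supplies only a citation.
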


Finally, we divide $\mathbb{R}\times(0,t)$ into
three parts, that is $\mathbb{R}\times(0,t)=\Omega_-\cup\Omega_c\cup\Omega_+$ with
$$
\Omega_{\pm}=\left\{(x,t)\left|\;\pm2x>\pm\lambda_{\pm}(v_{\pm}^m,s_{\pm})t\right.\right\}
$$
and
$$
\Omega_{c}=\left\{(x,t)\left|\;\lambda_-(v_-^m,s_-)t\leq 2x \leq \lambda_{+}(v_{+}^m, s_{+})t \right.\right\}.
$$
Then Lemma \ref{burgerxingzhi} and Lemma \ref{jianduanboshuaijian} lead to the following valuable lemma, which can be found in \cite{huangfm2010}.

\begin{lemma}\label{rare-pro}
For any given $(v_-,u_-,\theta_-)$, we assume that $ (v_+, u_+, \theta_+) $ satisfies \eqref{r1cr3youzhuangtai} with   $ \delta = \left| \theta_+ - \theta_-  \right| < \bar{\delta} $.
Then the smooth rarefaction waves $(V_{\pm}^r, U_{\pm}^r, \Theta_{\pm}^r)$
constructed in \eqref{gouzaoxishubo} and the viscous contact wave $(V^{c}, U^{c}, \Theta^{c})$ constructed in \eqref{smoothjianduanbo} satisfying the following properties:
\begin{itemize}
\item[$\mathrm{(i)}$] $(U_{\pm}^r)_x \geq 0$ \;$(x \in \mathbb{R},\, t>0)$.
\item[$\mathrm{(ii)}$] For $1\leq q \leq \infty$, there exists a positive constant $C = C(q, v_-, u_-, \theta_-, \bar{\delta}, \tilde{\delta})$ such that for $\delta = \left| \theta_+ - \theta_- \right| $ and $ t \geq 0 $,
$$
\left\| \left[ (V_{\pm}^r)_x,(U_{\pm}^r)_x,(\Theta_{\pm}^r)_x  \right](\cdot, t) \right\|_{L^q}
\leq C\min\left\{\delta, \delta^{\frac{1}{q}}(1+t)^{-1 + \frac{1}{q}}\right\}
$$
and
$$
\left\| \left[ (V_{\pm}^r)_{xx},(U_{\pm}^r)_{xx},(\Theta_{\pm}^r)_{xx} \right] (\cdot,t) \right\|_{L^q}
\leq C\min \left\{\delta, (1+t)^{-1}\right\}.
$$
\item[$\mathrm{(iii)}$] There exists a positive constant $C=C(v_-, u_-, \theta_-, \bar{\delta}, \tilde{\delta})$
such that for
$$
c_0=\frac{1}{10}\min\left\{\left| \lambda_-(v_-^m,s_-) \right| , \lambda_+(v_+^m,s_+), \hat{c} \lambda_-^2(v_-^m,s_-), \hat{c} \lambda_+^2(v_+^m,s_+), 1 \right\},
$$
we have in $\Omega_c$
$$
\left| (V_{\pm}^r)_x \right|     + (U_{\pm}^r)_x    + \left| (\Theta_{\pm}^r)_x \right|     + \left| V_{\pm}^r - v_{\pm}^m \right|     + \left| \Theta_{\pm}^r - \theta_{\pm}^m \right|
\leq C\delta \mathrm{e}^{-c_0(|x| + t)}
$$
and in $\Omega_{\mp}$
$$
\left\{
\begin{array}{ll}
\left| V^{c}_x \right|    + \left| U^{c}_x \right|   + \left| \Theta^{c}_x \right|      + \left| V^{c} - v_{\mp}^m \right|     + \left| \Theta^{c} - \theta_{\mp}^m \right|    \leq C\delta \mathrm{e}^{-c_0(|x|+t)},   \\[2mm]
\left| (V_{\pm}^r)_x \right|      + (U_{\pm}^r)_x     + \left| (\Theta_{\pm}^r)_x \right|      + \left| V_{\pm}^r - v_{\pm}^m \right|      + \left| \Theta_{\pm}^r - \theta_{\pm}^m \right|
\leq C\delta \mathrm{e}^{-c_0(|x|+t)}.
\end{array}
\right.
$$

\item[$\mathrm{(iv)}$] For the rarefaction waves $(v_{\pm}^r, u_{\pm}^r, \theta_{\pm}^r)(x/t)$ determined by \eqref{nstuidao2} and \eqref{rareriemanchuzhi}, it holds
$$
\lim_{t\rightarrow +\infty}\sup_{x\in\mathbb{R}} \left| (V_{\pm}^r, U_{\pm}^r, \Theta_{\pm}^r)(x,t) - (v_{\pm}^r, u_{\pm}^r, \theta_{\pm}^r)(x/t) \right|  = 0.
$$
\end{itemize}
\end{lemma}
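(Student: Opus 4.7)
The strategy is to reduce every estimate to a statement about either the Burgers profile $w_\pm$ (via Lemma \ref{burgerxingzhi}) or the self-similar contact profile $\bar\theta$ (via Lemma \ref{jianduanboshuaijian}), and then to verify that on each of the three subregions $\Omega_-,\Omega_c,\Omega_+$ the stated exponential decay follows from simple geometric considerations.

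For part (i), differentiate the relation $U_\pm^r(x,t)=u_\pm-\int_{v_\pm}^{V_\pm^r(x,t)}\lambda_\pm(\eta,s_\pm)\,\mathrm d\eta$ and use $\lambda_\pm(V_\pm^r,s_\pm)=w_\pm$ together with $\lambda_\pm'=\pm\tfrac{\partial \lambda_\pm}{\partial v}<0\,(>0)$ in the relevant regime; a short computation shows $(U_\pm^r)_x=(w_\pm)_x\geq 0$ by Lemma \ref{burgerxingzhi}(i). For part (ii), implicit differentiation of the first relation in \eqref{gouzaoxishubo} gives $(V_\pm^r)_x$, $(U_\pm^r)_x$, $(\Theta_\pm^r)_x$ as smooth bounded multiples of $(w_\pm)_x$; the second derivatives bring in $(w_\pm)_{xx}$ plus a smooth bounded quadratic in $(w_\pm)_x$. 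Since the range of $w_\pm$ stays in a fixed compact set, the $L^q$ bounds in Lemma \ref{burgerxingzhi}(ii) transfer verbatim (with adjusted constants).

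Part (iii) is where the main work lies. I would decompose the estimate into two independent tasks: the rarefaction piece and the contact piece. For the rarefaction piece, note that the definition of $\Omega_c\cup\Omega_\mp$ restricts the variable $x/t$ so that, for the $1$-family, either $x\geq \tfrac12\lambda_-(v_-^m,s_-)t$ (placing us strictly to the right of the characteristic $x=w_r t$ with $w_r<0$), or the opposite inequality that places us strictly to the left. In the first case Lemma \ref{burgerxingzhi}(iv) gives $|w_-(x,t)-w_r|+|(w_-)_x|\leq 2\tilde w\,\mathrm e^{-2(x+|w_r|t)}$, and on $\{x\geq \tfrac12|w_r|t\}$ this is bounded by $C\delta\,\mathrm e^{-c_0(|x|+t)}$ for the choice of $c_0$ specified in the lemma; the case $x\leq 0$ in $\Omega_c$ is then handled by noting $x+|w_r|t\geq \tfrac12(|w_r|t)\geq c_0(|x|+t)$ on $\Omega_c\cap\{x\leq 0\}$, where $|x|\leq \tfrac12|w_r|t$ forces $|x|+t\leq (1+\tfrac{2}{|w_r|})\cdot |w_r|t$. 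An analogous argument with Lemma \ref{burgerxingzhi}(iii) (using $w_l>0$ for the $3$-family) handles $(V_+^r,U_+^r,\Theta_+^r)$. This yields the claimed bounds for the rarefaction quantities both in $\Omega_c$ and in $\Omega_\mp$. For the contact piece, one uses the Gaussian decay $(1+t)^{k/2}|\partial_x^k\bar\theta|+|\bar\theta-\theta_\pm|\leq c_1\delta\,\mathrm e^{-\hat c x^2/(1+t)}$ from \eqref{shuaijianlv1}. On $\Omega_c$ one has $|x|\leq \tfrac12\max\{|\lambda_\pm(v_\pm^m,s_\pm)|\}\,t$, so $\tfrac{x^2}{1+t}\geq c\,t\geq c(|x|+t)$, while on $\Omega_\mp$ one has $|x|\geq \tfrac12|\lambda_\mp(v_\mp^m,s_\mp)|\,t$, so $\tfrac{x^2}{1+t}\geq c\,|x|\geq c(|x|+t)$. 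The choice of $c_0=\tfrac{1}{10}\min\{|\lambda_-(v_-^m,s_-)|,\lambda_+(v_+^m,s_+),\hat c\lambda_-^2(v_-^m,s_-),\hat c\lambda_+^2(v_+^m,s_+),1\}$ is exactly what makes both inequalities go through uniformly.

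Part (iv) is a direct consequence of Lemma \ref{burgerxingzhi}(v): the Burgers solution $w_\pm(x,t)$ converges uniformly in $x$ to the Riemann solution $w_\pm^r(x/t)$, so inverting $\lambda_\pm(V_\pm^r,s_\pm)=w_\pm$ and using that $\lambda_\pm(\cdot,s_\pm)$ is a $C^1$-diffeomorphism on the relevant compact $v$-interval gives the uniform convergence of $V_\pm^r$ to $v_\pm^r$; the convergences for $U_\pm^r$ and $\Theta_\pm^r$ then follow from the integral/power-law formulas in \eqref{gouzaoxishubo} via dominated convergence. The main obstacle I anticipate is the bookkeeping in part (iii), specifically the verification that the Gaussian-to-exponential conversion for the contact profile and the one-sided exponential bounds for $w_\pm$ combine with precisely the constant $c_0$ asserted in the lemma; everything else is a routine application of the two auxiliary lemmas.
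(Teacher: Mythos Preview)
Your approach is correct and coincides with the paper's: the paper does not give a self-contained argument but simply remarks that the lemma follows from Lemma~\ref{burgerxingzhi} and Lemma~\ref{jianduanboshuaijian}, referring to \cite{huangfm2010} for the details, which is precisely the reduction you carry out.

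Two small slips worth cleaning up, neither of which is a genuine gap. In part~(i), the chain rule actually gives $(U_\pm^r)_x=\dfrac{2V_\pm^r}{\gamma+1}(w_\pm)_x$ rather than $(U_\pm^r)_x=(w_\pm)_x$; the factor is positive, so your sign conclusion survives. In part~(iii), your sentence about the contact profile on $\Omega_c$ is both wrong (at $x=0\in\Omega_c$ one has $x^2/(1+t)=0$, so the inequality $x^2/(1+t)\geq ct$ fails) and unnecessary: the lemma only asserts decay of the contact wave on $\Omega_\mp$, not on $\Omega_c$, so you should simply delete that clause and keep your correct $\Omega_\mp$ argument.
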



\section{Proof of Theorem \ref{thm1} (Viscous contact wave)}\label{section3}
To prove Theorem \ref{thm1} for a single viscous contact wave, we use elementary energy method.
It is easy to derive that $(\phi, \psi,\zeta, E, b)$ satisfies
\begin{equation}\label{raodong}
\left\{
\begin{aligned}
&\phi_t-\psi_x=0, \\[2mm]
&\psi_t + \left( \frac{R\zeta- {p}_+ \phi}{v} \right)_x     = \mu \left( \frac{\psi_x}{v} \right)_x     - \mu \left( \frac{\phi \bar{u}_x}{v \bar v} \right)_x   -\left[ \bar u_t - \mu \left( \frac{\bar u_x}{\bar v} \right)_x  \right]  \\[2mm]
& \qquad\qquad\qquad\qquad\quad\;\;\,    -v(E+\psi b+ \bar {u} b)b, \\[2mm]
&\frac{R}{\gamma-1} \zeta_{t} + p \psi_{x}      = \kappa\left( \frac{\bar v \zeta_x-\phi \bar \theta_x}{v \bar v} \right)_x     - \frac{R \zeta - p_+ \phi}{v} \bar u_x      + \mu\frac{(\psi_x +  \bar {u}_x)^2}{v} \\[2mm]
& \qquad\qquad\qquad\;\;\;\;\, + v(E+\psi b+ \bar {u} b)^2, \\[2mm]
&\varepsilon \left(E_{t}-\frac{u}{v}E_x\right)-\frac{1}{v}b_{x}+ E +\psi b+ \bar {u} b=0,   \\[2mm]
&b_{t}-\frac{u}{v}b_x-\frac{1}{v}E_{x}=0,
\end{aligned}
\right.
\end{equation}
with initial data for $x \in \mathbb{R}$,
\begin{equation}\label{raodongchuzhi}
\begin{aligned}[b]
&\;(\phi_0, \psi_0, \zeta_0, E_0, b_0)(x) \\[2mm]
:=&\;(v_0(x)- \bar v (x,0),u_0(x)-  \bar u (x,0),\theta_0(x)-  \bar \theta (x,0),E_0(x),b_0(x)).
\end{aligned}
\end{equation}

The local existence of solutions to the reformulated Cauchy problem $\eqref{raodong}$, $\eqref{raodongchuzhi}$ can be obtained by the
standard iteration argument.
To prove Theorem \ref{thm1} for brevity, we only devote ourselves to establishing the global-in-time {\it a priori} estimates in the following.

\begin{proposition}\label{prop1} $($\!{\it A priori} estimates$)$ Suppose all the conditions in Theorem \ref{thm1} hold. Let $(\phi,\psi,\zeta,E,b) \in X(0,T) $ be a smooth solution to the problem $\eqref{raodong}$, $\eqref{raodongchuzhi}$ on $0\leq t\leq T$ for $T>0$. There exist some suitably small positive constants $\delta_0$ and $\varepsilon_0$ such that if $\delta < \min \{\delta_0, \bar{\delta}\}$ and
\begin{equation}\label{xianyanjiashe}
 \sup_{0\leq t\leq T} \left(\| (\phi, \psi, \zeta)(t) \|_{H^2} + \|(E,b)(t)\|_{H^1} \right)  \leq \varepsilon_0,
\end{equation}
then $ (\phi, \psi, \zeta, E, b)(x,t) $ satisfies
\begin{align} 
&\quad \sup_{0\leq t\leq
T} \|(\phi,\psi,\zeta,\sqrt{\varepsilon}E,b)(t)\|^2_{H^1}     +   \int_0^T(\|(\phi_x,E_x,b_x)\|^2 + \|(\psi_x,\zeta_x)\|_{H^1}^2 )\,{\rm{d}}t    \nonumber \\[2mm]
& \quad    + \int_0^T \|E+\psi b+  \bar{u} b\|^2 \,{\rm{d}}t  \leq   C \delta^\frac{2}{3}  +  C\|(\phi_0,\psi_0,\zeta_0,E_0,b_0)\|^2_{H^1}     \label{xianyanjieguo1'}
\end{align}
and
  \begin{align} 
   & \quad \sup_{0\leq t\leq
T} \| \left( \phi_{xx}, \psi_{xx}, \zeta_{xx} \right)  \|^2       +  \int_0^T \| \left( \phi_{xx}, \psi_{xxx}, \zeta_{xxx} \right)  \|^2  \,\mathrm{d}\tau       \nonumber \\[2mm]
   & \leq    C \delta^{\frac{2}{3}}    +  C \| ( \phi_0, \psi_0, \zeta_0 ) \|^2_{H^2}   +  C \| ( E_0, b_0 ) \|^2_{H^1} .      \label{xianyanjieguo2}
  \end{align}
\end{proposition}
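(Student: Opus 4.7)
My plan is to establish the a priori estimates \eqref{xianyanjieguo1'} and \eqref{xianyanjieguo2} in three stages of energy estimates, following the broad outline of Huang-Li-Matsumura for the Navier-Stokes case but engineered around the Maxwell-structure observations highlighted in the introduction, namely the use of the compound dissipative quantity $E+\psi b+\bar{u}b$ and the reformulation $(vb)_t-(E+\psi b+\bar{u}b)_x=0$ of $\eqref{raodong}_5$.

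\textbf{Zero-order estimate.} I construct a relative-entropy functional of the form
\[
\mathcal{E}(t) = \int_{\mathbb{R}}\Big[ R\bar\theta\,\Phi(v/\bar v) + \tfrac12\psi^2 + \tfrac{R}{\gamma-1}\bar\theta\,\Phi(\theta/\bar\theta) + \tfrac12\varepsilon v E^2 + \tfrac12 v b^2\Big]\,\mathrm{d}x,
\]
with $\Phi(s)=s-1-\ln s\ge 0$. Differentiating along \eqref{raodong}, and in particular multiplying $\eqref{raodong}_4$ by $vE$ and $\eqref{raodong}_5$ by $vb$ and adding, the cross terms $\pm E_xb/v$ cancel and the quadratic forcings $-v(E+\psi b+\bar u b)b$ and $v(E+\psi b+\bar u b)^2$ assemble into the compound good term $\int v(E+\psi b+\bar u b)^2\,\mathrm{d}x$, on top of the classical Navier-Stokes dissipation $\int \mu\psi_x^2/(v\theta)+\kappa\zeta_x^2/(v\theta^2)\,\mathrm{d}x$. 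On the bad side appear (a) inhomogeneous terms driven by $R_1,R_2=O(\delta)(1+t)^{-\alpha}\mathrm{e}^{-\hat cx^2/(1+t)}$ that integrate to $O(\delta^{2/3})$, (b) cubic nonlinearities absorbable by the $L^\infty$ smallness from \eqref{xianyanjiashe} and \eqref{raodongwuqiongguji}, and (c) the undetermined-sign weighted terms $\int\bar u_x(\varepsilon E^2+b^2)\,\mathrm{d}x$ coming from the transport $u/v$ in the Maxwell pair. The smallness condition \eqref{jiedianchangshuxiao} is calibrated exactly so that the $\varepsilon E$-couplings to the fluid dissipation are absorbed.

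\textbf{Weighted estimate for $(E,b)$.} The terms in (c) are the heart of the argument. Using the reformulation $(vb)_t=(E+\psi b+\bar u b)_x$ to control $\langle (vb)_t, (vb)g^2\rangle$, together with the analogous identity coming from $\eqref{raodong}_4$, I apply Lemma \ref{heatkernel} with $h=b$ (and $h=\sqrt{\varepsilon}E$) and weight $\omega$ from \eqref{omegarehe} adjusted to the Gaussian profile of $\bar{u}_x$. This trades the weighted terms $\int\bar u_x(\varepsilon E^2+b^2)\,\mathrm{d}x$ for $\|(E_x,b_x)\|^2$ plus the already-obtained $\int v(E+\psi b+\bar u b)^2\,\mathrm{d}x$ plus harmless data and a controllable fraction of the energy. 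Combined with the relative-entropy step, this yields \eqref{xianyanjieguo1'} up to the first derivatives of $(\phi,\psi,\zeta)$. The bound $\|\phi_x\|^2$ is extracted from $\eqref{raodong}_2$ written as $\mu(\phi_x/v)_x=\psi_t+\cdots$ and tested against $\phi_x/v$, in the classical way, and $\|(\sqrt{\varepsilon}E_x,b_x)\|^2$ is produced by differentiating $\eqref{raodong}_{4,5}$ in $x$ and testing with $E_x$ and $b_x$ so that the cross term yields the mixed dissipation, the transport commutators $(u/v)_x E_x,(u/v)_xb_x$ being absorbed by $\|\phi_x\|_{L^\infty}$ which requires in turn the $H^2$ a priori control.

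\textbf{Second-order estimate and main obstacle.} For \eqref{xianyanjieguo2} I apply $\partial_x^2$ to $\eqref{raodong}_{2,3}$ and test with $\psi_{xx}/\theta$, $\zeta_{xx}/\theta^2$; this produces $\|(\psi_{xxx},\zeta_{xxx})\|^2$ as the principal dissipation and $\|\phi_{xx}\|^2$ after once more exploiting the relation between $\mu\phi_{xx}$ and $\psi_{xt}$. The genuinely new difficulty at this level is that the Lagrangian Maxwell nonlinearities produce $\int(|\phi_x|+|\psi_x|)(E_x^2+b_x^2)\mathrm{d}x$ type trouble terms, controlled exactly by the $H^2$ smallness assumption on $(\phi,\psi,\zeta)$ together with \eqref{lwuqiong}. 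The overall main obstacle, as flagged in the discussion preceding the proposition, is the triple combination of (i) the undetermined sign of $\bar u_x$, (ii) the absence of a pure $\int b^2\,\mathrm{d}x\mathrm{d}\tau$ dissipation for the magnetic field, and (iii) the additional Lagrangian transport nonlinearities $(u/v)E_x,(u/v)b_x$ not present in the Eulerian formulation; closing the estimates requires the compound good term $\int v(E+\psi b+\bar u b)^2$, the Maxwell identity $(vb)_t=(E+\psi b+\bar u b)_x$, Lemma \ref{heatkernel}, and the smallness window \eqref{jiedianchangshuxiao} on $\varepsilon$ to work in tandem.
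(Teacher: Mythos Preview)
Your overall strategy matches the paper's proof closely: relative-entropy functional plus Maxwell energy for the zero-order step, Lemma~\ref{heatkernel} for the weighted terms, then the cascade $\|\phi_x\|^2$, $\|(\sqrt{\varepsilon}E_x,b_x)\|^2$, $\|(\psi_x,\zeta_x)\|^2$, and the second-order estimates. However, two points in your description are imprecise in ways that, if taken literally, would leave gaps.

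First, the compound good term $\int v(E+\psi b+\bar u b)^2\,\mathrm{d}x$ does \emph{not} assemble merely from testing $\eqref{raodong}_4$ with $vE$ and $\eqref{raodong}_5$ with $vb$. Those give only $\int v(E+\psi b+\bar u b)E\,\mathrm{d}x$; combined with the fluid forcing $-\int v(E+\psi b+\bar u b)\psi b\,\mathrm{d}x$ you are still missing the $\bar u b$ piece. The paper's key extra step (equation~\eqref{tuidao2}) is to multiply $\eqref{raodong}_4$ additionally by $v\bar u b$, which supplies $\int v(E+\psi b+\bar u b)\bar u b\,\mathrm{d}x$ and completes the square. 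It is precisely this multiplier, not the transport $u/v$ in the Maxwell pair, that generates the undetermined-sign terms $\int\bar u_x(\varepsilon E^2+b^2)\,\mathrm{d}x$ you list in (c), together with the cross term $\varepsilon v\bar u Eb$ whose absorption into $\tfrac12\varepsilon vE^2+\tfrac12vb^2$ is where the condition \eqref{jiedianchangshuxiao} first enters (see~\eqref{danjifen1}).

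Second, the paper does not apply Lemma~\ref{heatkernel} with $h=\sqrt{\varepsilon}E$. The weighted control of $\int E^2\omega^2$ is obtained algebraically via $E=(E+ub)-ub$ (equation~\eqref{daiquanE}), reducing it to $\|E+\psi b+\bar u b\|^2$ plus $\int b^2\omega^2$. Lemma~\ref{heatkernel} is applied with $h=b$, using the identity $(vb)_t=(E+ub)_x$ exactly as you say, but it is also applied with $h=R\zeta+(\gamma-1)p_+\phi$ (and a companion integration-by-parts argument for $(R\zeta-p_+\phi)^2$) to control the fluid weighted terms $\int(\phi^2+\zeta^2)\omega^2$ arising in the zero-order estimate; your outline omits this Huang--Li--Matsumura step entirely, yet without it the $C\delta^{2/3}\int(\phi^2+\zeta^2)\omega^2$ on the right of the zero-order inequality cannot be closed.
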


Once Proposition \ref{prop1} is proved, we can close the {\it a priori} assumption \eqref{xianyanjiashe}.
The global existence of the solution to the Cauchy problem \eqref{raodong}, \eqref{raodongchuzhi} then follows from the
standard continuation argument based on the local existence and the {\it a priori} estimates \eqref{xianyanjieguo1'} and  \eqref{xianyanjieguo2}.
For $0<\varepsilon<\bar{C}$, the estimate \eqref{xianyanjieguo1'} and the equations \eqref{raodong} (respectively \eqref{raodong2}) imply that
\begin{equation*}
\int_0^\infty\left(\|(\phi_x,\psi_x,\zeta_x,E_x,b_x)(t)\|^2 + \left| \frac{\rm d }{{\rm d} t} \|(\phi_x,\psi_x,\zeta_x,E_x,b_x)(t)\|^2 \right| \right){\rm{d}}t <\infty,
\end{equation*}
which easily leads to
\begin{equation}\label{twuqiongfanshuling}
\lim_{t\to{+\infty}}\|(\phi_x,\psi_x,\zeta_x,E_x,b_x)(t)\| =0.
\end{equation}
Then using the Sobolev inequality \eqref{lwuqiong}, together with \eqref{xianyanjieguo1'} and \eqref{twuqiongfanshuling}, directly implies the large time behavior of the solution, that is, \eqref{dashijianxingwei1} (respectively \eqref{dashijianxingwei2}).

Proposition \ref{prop1} will be finished by the following several lemmas.
According to Remark \ref{remark1}, we only consider the case of $u_-\neq 0 $ and the case of $u_- = 0 $ is analogous.
We first give the zero-order energy estimate.

\begin{lemma}\label{dijieguji}
Suppose that all the conditions in Theorem \ref{thm1} hold. For all $0 < t < T $, there exists a constant $\bar{C}$ depending only on $v_\pm$ and $|u_-|$ such that if $ 0< \varepsilon <\bar{C} $, then
  \begin{align} 
 &\qquad  \| (\phi, \psi, \zeta, \sqrt{\varepsilon}E, b) \|^2         + \int_0^t   \| ( \psi_x, \zeta_x, E+\psi b+\bar{u}b ) \|^2  \,\mathrm{d}\tau     \nonumber \\[2mm]
 & \leq C\| (\phi_0, \psi_0, \zeta_0, E_0, b_0) \|^2 + C\delta    + C \delta^{\frac{2}{3}} \int_0^t\int_{\mathbb{R}} \left(\phi^{2} + \zeta^{2} + b^2 \right) \omega^2  \,\mathrm{d}x \mathrm{d}\tau   \nonumber \\[2mm]
 & \quad   +  C \delta  \int_0^t \| \phi_x  \|^2 \,\mathrm{d}\tau.       \label{jibennengliang}
  \end{align}
Here $\omega$ is the function defined in \eqref{omegarehe} with $ \alpha = {\hat{c}}/{4} $.

\end{lemma}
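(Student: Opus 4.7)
The plan is to build a relative-entropy energy for the fluid variables coupled with the natural $L^{2}$ energy for the electromagnetic field, and then to exploit the Lagrangian form of the Maxwell equations so that the three coupling terms collapse into the positive dissipation $v(E+\psi b+\bar u b)^{2}$ that appears on the left of \eqref{jibennengliang}. Throughout I abbreviate $F:=E+\psi b+\bar u b$.

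First, in the classical entropy--energy spirit for compressible Navier--Stokes, I would multiply $\eqref{raodong}_{1}$ by $-R\bar\theta(\tfrac{1}{v}-\tfrac{1}{\bar v})$, $\eqref{raodong}_{2}$ by $\psi$, and $\eqref{raodong}_{3}$ by $(1-\bar\theta/\theta)$, sum, and integrate by parts over $\mathbb{R}$. The resulting fluid identity carries the standard Navier--Stokes dissipations $\mu\bar\theta\psi_{x}^{2}/(v\theta)$ and $\kappa\bar\theta\zeta_{x}^{2}/(v\theta^{2})$ on the left, the wave residuals $\mathcal R_{\mathrm w}$ (collecting every term carrying a derivative of $(\bar v,\bar u,\bar\theta)$ together with $R_{1},R_{2}$) on the right, and the two electromagnetic couplings $-\int v\psi bF\,dx+\int(\zeta/\theta)vF^{2}\,dx$ on the right. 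In parallel I would multiply $\eqref{raodong}_{4}$ by $vE$ and $\eqref{raodong}_{5}$ by $vb$; the essential point is that thanks to $v_{t}=u_{x}$, the contributions $-\tfrac12 v_{t}E^{2}$ and $\tfrac12 u_{x}E^{2}$ arising when integrating $\varepsilon vEE_{t}$ and $-\varepsilon uEE_{x}$ by parts cancel exactly (and likewise for $b$), while $-(b_{x}E+E_{x}b)$ integrates to zero. This leaves the clean Maxwell identity
\begin{equation*}
\frac{d}{dt}\!\!\int_{\mathbb{R}}\!\tfrac12\bigl(\varepsilon vE^{2}+vb^{2}\bigr)dx+\!\!\int_{\mathbb{R}}\! vEF\,dx=0.
\end{equation*}

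Adding the two identities, the three coupling terms rearrange through the trivial but crucial algebraic identity $v(E+\psi b+\bar u b)F=vF^{2}$, that is $vEF+v\psi bF+v\bar u bF=vF^{2}$. Combining this with $1-\zeta/\theta=\bar\theta/\theta$ gives
\begin{equation*}
\int_{\mathbb{R}}\!\Bigl[vEF+v\psi bF-\tfrac{\zeta}{\theta}vF^{2}\Bigr]dx=\int_{\mathbb{R}}\!\tfrac{\bar\theta}{\theta}vF^{2}\,dx-\int_{\mathbb{R}}\! v\bar u bF\,dx,
\end{equation*}
so the combined identity carries the \emph{signed} dissipation $\int(\bar\theta/\theta)vF^{2}\,dx$ on its left. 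By \eqref{contactushangjie} and the a priori assumption \eqref{xianyanjiashe}, this is bounded below by a positive multiple of $\int vF^{2}\,dx$, producing exactly the $\int_{0}^{t}\|F\|^{2}d\tau$ contribution demanded by \eqref{jibennengliang}. This is precisely the packaging announced in the introduction.

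It then remains to absorb the residuals. The wave part $\mathcal R_{\mathrm w}$ is handled by Cauchy--Schwarz and the pointwise/$L^{2}$ decay estimates of Lemma \ref{jianduanboshuaijian} and Lemma \ref{jianduanbol2mo}, producing exactly the $C\delta$, $C\delta^{2/3}\!\!\int_{0}^{t}\!\!\int(\phi^{2}+\zeta^{2}+b^{2})\omega^{2}$ and $C\delta\!\int_{0}^{t}\|\phi_{x}\|^{2}$ pieces on the right of \eqref{jibennengliang}. The main obstacle is the stubborn cross-term $\int v\bar u bF$: because the magnetic field has no intrinsic damping, it cannot be bounded simply by $\int vb^{2}$ without paying a Gronwall price exponential in time. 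I would decompose $\bar u=u_{-}+(\bar u-u_{-})$; the fluctuation satisfies $|\bar u-u_{-}|\le C\delta\omega$ by \eqref{smoothjianduanbo} and \eqref{shuaijianlv1}, so after a Cauchy--Schwarz its contribution is dominated by $\eta\!\int vF^{2}+C_{\eta}\delta^{2}\!\int b^{2}\omega^{2}$, which matches the desired weighted term in \eqref{jibennengliang}. The constant part $u_{-}\!\int vbF$ is the crux: expanding $F=E+\psi b+\bar u b$ and applying Young's inequality with weights tuned to the left-hand-side weights $\varepsilon$ and $1$ of the instantaneous electromagnetic energy $\int\tfrac12(\varepsilon vE^{2}+vb^{2})$, one finds that it can be absorbed on the left provided the dimensionless number $\varepsilon u_{-}^{2}$ is smaller than a pure geometric constant depending only on $v_\pm$. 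This is exactly the quantitative threshold $\bar C$ of \eqref{barc} required by \eqref{jiedianchangshuxiao}, and explains why the restriction on the dielectric constant cannot be removed when $u_{-}\ne0$. A final Gronwall argument then closes the estimate and yields \eqref{jibennengliang}.
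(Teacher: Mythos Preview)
Your entropy identity and the Maxwell identity are correct, and the algebraic rearrangement that produces $\int(\bar\theta/\theta)vF^{2}\,dx-\int v\bar u bF\,dx$ on the left is exactly right. The gap is in how you dispose of the leftover cross term $\int v\bar u bF\,dx$.

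You split $\bar u=u_{-}+(\bar u-u_{-})$; the fluctuation is indeed $O(\delta)\omega$ and lands in the weighted term on the right of \eqref{jibennengliang}. But for the constant piece $u_{-}\!\int vbF\,dx$ your proposed mechanism cannot work. After expanding $F$ or applying Young's inequality, every option produces a time--space integral of $\int vb^{2}\,dx$ (or, equivalently, of $\int vE^{2}\,dx$, which reduces to the same thing via $E=F-ub$). These quantities are \emph{not} dissipations on your left-hand side: the only place $\int vb^{2}$ appears there is inside the time derivative $\tfrac{d}{dt}\!\int\tfrac12(\varepsilon vE^{2}+vb^{2})$. Tuning Young weights to $\varepsilon$ and $1$ does not change this---you are trying to absorb a time--space integral into an instantaneous energy, which is impossible. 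Your own earlier remark is exactly on point: without magnetic damping, pushing $\int_{0}^{t}\!\int vb^{2}$ to the right forces a Gronwall factor $e^{C|u_{-}|^{2}t}$, which destroys the uniform-in-$t$ estimate \eqref{jibennengliang}. No choice of $\varepsilon u_{-}^{2}$ fixes this, because the exponential is in $t$, not in $\varepsilon$.

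The paper closes this hole by an additional structural step rather than an estimate: it multiplies $\eqref{raodong}_{4}$ once more, this time by $v\bar u b$, and uses $\eqref{raodong}_{5}$ to obtain
\[
\frac{d}{dt}\!\int_{\mathbb R}\varepsilon v\bar u Eb\,dx+\int_{\mathbb R} v\bar u bF\,dx=-\tfrac12\!\int_{\mathbb R}\bar u_{x}(\varepsilon E^{2}+b^{2})\,dx+\int_{\mathbb R}\varepsilon(v\bar u_{t}-u\bar u_{x})Eb\,dx.
\]
Adding this to your two identities cancels $\int v\bar u bF\,dx$ \emph{exactly} on the left, at the cost of the correction $\int\varepsilon v\bar u Eb\,dx$ in the energy functional and the $\bar u_{x},\bar u_{t}$-weighted remainders on the right. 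The latter carry the contact-wave weight $\omega$ and feed the $\delta^{2/3}\!\int(\phi^{2}+\zeta^{2}+b^{2})\omega^{2}$ term. The former is where the condition $\varepsilon u_{-}^{2}<\bar C$ actually enters: one needs $|\varepsilon v\bar u Eb|\le\tfrac{25}{64}\varepsilon vE^{2}+\tfrac14 vb^{2}$ (Cauchy with $|\bar u|\le\tfrac54|u_{-}|$), which is absorbed into the \emph{instantaneous} energy $\int\tfrac12(\varepsilon vE^{2}+vb^{2})$ provided $\varepsilon u_{-}^{2}\le\tfrac14$. That is the correct origin of the $\varepsilon u_{-}^{2}$ threshold---it controls an energy correction at a fixed time, not a time-integrated dissipation.
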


\begin{proof}[Proof]
Motivated by the works of \cite{Kawashima1986} and \cite{huangfm2010}, multiplying $ \eqref{raodong}_1 $ by $ -R \bar{\theta} \left( \frac{1}{v} - \frac{1}{\bar{v}} \right)  $, $ \eqref{raodong}_2 $ by $ \psi $ and $ \eqref{raodong}_3 $ by $ \zeta \theta^{-1} $ respectively, and adding up the resulting equations, after some detailed calculation, we obtain
\begin{align} 
&\quad\left(\frac{1}{2} \psi^{2} +  R \bar{\theta} \Phi\left(\frac{v}{\bar v}\right) + \frac{R}{\gamma-1} \bar{\theta} \Phi\left(\frac{\theta}{\bar{\theta}}\right)\right)_{t} +\mu \frac{\psi_{x}^{2}}{v} +\kappa \frac{\zeta_{x}^{2}}{v \theta}  + H_{x}   \nonumber \\[2mm]
&=  F \psi    + Q     + v(E+\psi b+\bar{u}b)^2\frac{\zeta}{\theta}    -v(E+\psi b+\bar{u}b)\psi b,     \label{shangliudui1}
\end{align}
where
\begin{equation*}
 \Phi(s) = s - 1 -\ln s, \quad s>0,
\end{equation*}

\begin{equation*}
	H= (R\zeta-{p}_+ \phi)\frac{\psi}{v}-\mu\frac{\psi\psi_x}{v}-\kappa\frac{\zeta}{\theta v \bar{v}}(\bar{v}\zeta_x-\phi\bar{\theta}_x),
\end{equation*}

\begin{equation*}
   F = - \mu \left( \frac{\phi \bar{u}_x}{v \bar{v}} \right)_x      - \left[ \bar{u}_t - \mu \left( \frac{\bar{u}_x}{\bar{v}} \right)_x  \right]
\end{equation*}
and
   \begin{align*}
   Q &=  - \frac{R \bar{\theta} \phi^2 \bar{u}_x }{v \bar{v}^2 }     + R \bar{\theta}_t \Phi \left( \frac{v}{\bar{v}} \right)      - \frac{R}{\gamma-1} \frac{\zeta^2 \bar{\theta}_t}{\theta \bar{\theta} }         +  \frac{R}{\gamma-1} \bar{\theta}_t \Phi\left(\frac{\theta}{\bar\theta}\right)     + \kappa \frac{\phi \bar{\theta}_x \zeta_x}{\bar{v}v \theta}  \\[2mm]
     &\quad\,    + \kappa \frac{(\bar{v} \zeta_x - \phi \bar{\theta}_x)\cdot(\zeta \zeta_x + \zeta \bar{\theta}_x)}{v \bar{v} \theta^2}     - \frac{(R \zeta - p_+ \phi)\zeta}{v \theta} \bar{u}_x     + \mu \frac{(\psi_x + \bar{u}_x)^2}{v \theta}\zeta.
   \end{align*}
By applying $\eqref{raodongwuqiongguji}$ and the properties of viscous contact wave in Lemma \ref{jianduanboshuaijian}, we can derive
\begin{align*} 
   \| F \|_{L^1}
   & \leq C \int_{\mathbb{R}} \left( \left| \phi_x \bar{u}_x  \right|   + \left| \phi \bar{u}_{xx} \right|  + \left| \phi \bar{u}_x \phi_x \right|   + \left| \phi \bar{u}_x \bar{v}_x  \right|     \right)  \,\mathrm{d}x     \\[2mm]
   & \quad   + \int_{\mathbb{R}} \left| \bar{u}_t - \mu \left( \frac{\bar{u}_x}{\bar{v}} \right)_x \right| \,\mathrm{d}x      \\[2mm]
   & \leq \| \bar{u}_t \|_{L^1}   + C \| \bar{u}_{xx} \|_{L^1}   + C \| \bar{u}_x \| \left( \| \phi_x \|  +  \| \bar{v}_x \| \right)      \\[2mm]
   & \leq  C \delta \left[ \left( 1+t \right)^{-1}  +  (1+t)^{-\frac{3}{4}} \| \phi_x \| \right] .      
\end{align*}
Hence, using the Sobolev inequality \eqref{lwuqiong} and the Young inequality gives
\begin{align} 
  \int_{\mathbb{R}} F \psi \,\mathrm{d}x
  & \leq  \| \psi \|_{L^{\infty}} \| F \|_{L^1}
    \leq  C \| \psi \|^{\frac{1}{2}} \| \psi_x \|^{\frac{1}{2}} \cdot \delta \left[ \left( 1+t \right)^{-1}  +  (1+t)^{-\frac{3}{4}} \| \phi_x \| \right]    \nonumber \\[2mm]
  & \leq  C \delta \| \psi_x \|^2  +  C \delta (1+t)^{-\frac{4}{3}}  +  C \delta (1+t)^{-1} \| \phi_x \|^{\frac{4}{3}}      \nonumber \\[3mm]
  &  \leq C \delta \| \left( \psi_x, \phi_x \right)  \|^2   +  C \delta (1+t)^{-\frac{4}{3}}.        \label{jiF}
\end{align}
Similar to the treatment of term $ Q $ for the Navier-Stokes equations (cf. \cite{huangfm2010}), we can get
\begin{equation}\label{jiQ}
	\left| \int_{\mathbb{R}} Q \,\mathrm{d}x  \right|  \leq    C \delta (1+t)^{-\frac{5}{4}}    + C (\eta + \varepsilon_0) \| (\psi_x,\zeta_x) \|^2    + C \delta \int_{\mathbb{R}} (\phi^2 + \zeta^2) \omega^2  \,\mathrm{d}x.
\end{equation}
Integrating $\eqref{shangliudui1}$ with respect to $x$, applying $\eqref{jiF}$ and $ \eqref{jiQ} $, then choosing $\eta$, $\varepsilon_0$ and $ \delta $ suitably small, we obtain
\begin{align}   
&\quad \frac{\mathrm{d}}{\mathrm{d}t} \int_{\mathbb{R}} \left( \frac{1}{2} \psi^{2} +  R \bar{\theta} \Phi\left(\frac{v}{\bar v}\right) + \frac{R}{\gamma-1} \bar{\theta} \Phi\left(\frac{\theta}{\bar{\theta}}\right) \right) \mathrm{d}x +C\left\|\left(\psi_{x}, \zeta_{x}\right)\right\|^{2}       \nonumber \\[2mm]
&\leq   C \delta (1+t)^{-\frac{5}{4}}      + C \delta  \int_{\mathbb{R}}  \left(\phi^{2} + \zeta^{2}\right) \omega^2   \mathrm{d} x     +  C \delta \| \phi_x  \|^2        \nonumber \\[2mm]
& \quad + \int_{\mathbb{R}}  v(E+\psi b+ \bar{u} b)^2\frac{\zeta}{\theta}\,{\rm{d}}x      -\int_{\mathbb{R}} v(E+\psi b+ \bar{u} b)\psi b\,{\rm{d}}x  .    \label{jiben1}
\end{align}

Next we try to use the structure of Maxwell equations to produce the composite good term: $\int_{\mathbb{R}} v(E+\psi b+ \bar{u} b)^2\,{\rm{d}}x$.

Multiplying $\eqref{raodong}_{4}$ by $vE$ and
$\eqref{raodong}_{5}$ by $vb$ respectively, summing them up and integrating the resulting equation with respect to $x$ over $\mathbb{R}$ gives
\begin{equation}\label{tuidao1}
\frac{\rm d}{{\rm d}t} \int_{\mathbb{R}} \frac{1}{2}(\varepsilon vE^2+vb^2) \,{\rm{d}}x     + \int_{\mathbb{R}}v(E+\psi b+\bar ub)E \,{\rm{d}}x  = 0.
\end{equation}
Moreover, multiplying $\eqref{raodong}_{4}$ by $v\bar{u} b$, integrating it with respect to $ x $ and applying $ \eqref{raodong}_{5} $, after some elementary computations, we have
\begin{align}\label{tuidao2}
& \quad \frac{\rm d}{{\rm d}t}\int_{\mathbb{R}}\varepsilon v \bar{u}Eb \,{\rm d}x    + \int_{\mathbb{R}}v(E+\psi b+\bar{u} b)\bar{u} b \,{\rm{d}}x        \nonumber \\[2mm]
& =     -\frac{1}{2}\int_{\mathbb{R}} \bar u_x(\varepsilon E^2+b^2)\,{\rm{d}}x      + \int_\mathbb{R} \varepsilon (v\bar{u}_t-u\bar{u}_x) E b \,{\rm d}x.
\end{align}
By utilizing the decay rate of viscous contact wave as in $\eqref{shuaijianlv1}$ and the Cauchy inequality, we can easily deduce that
\begin{align} 
 &\quad -\frac{1}{2}\int_{\mathbb{R}} \bar u_x(\varepsilon E^2+b^2)\,{\rm{d}}x      + \int_\mathbb{R} \varepsilon (v\bar{u}_t-u\bar{u}_x) E b \,{\rm d}x    \nonumber \\[2mm]
 &\leq C\delta \int_{\mathbb{R}} \left( \varepsilon E^2 + b^2 \right)\omega^2 \,\mathrm{d}x   + C \delta \varepsilon \int_{\mathbb{R}}  (E^2 + b^2)\omega^2  \,\mathrm{d}x       \nonumber \\[2mm]
 &\leq C \delta^{\frac{5}{6}}  \int_{\mathbb{R}}  (E^2 + b^2)\omega^2  \,\mathrm{d}x,      \label{remaining1}
\end{align}
where in the last inequality we have chosen $ \varepsilon \delta^{\frac{1}{6}} \leq 1$.
And the following estimate holds by using $|u|\leq \frac{3}{2}|u_-|$ in \eqref{contactushangjie}:
\begin{align} 
  \int_{\mathbb{R}} E^2 \omega^2 \,\mathrm{d}x
  & = \int_{\mathbb{R}} (E + ub - ub)^2 \omega^2 \,\mathrm{d}x
    \leq 2 \int_{\mathbb{R}} \left[ \left( E + ub \right)^2   + \left( ub \right)^2  \right] \omega^2  \,\mathrm{d}x     \nonumber \\[2mm]
  & \leq 2 \int_{\mathbb{R}} \left( E + \psi b + \bar{u} b  \right)^2  \,\mathrm{d}x    + 5 u_-^2 \int_{\mathbb{R}} b^2 \omega^2 \,\mathrm{d}x.       \label{daiquanE}
\end{align}
Putting \eqref{daiquanE} into \eqref{remaining1} and taking $ C \delta^{\frac{1}{6} } u_-^2 \leq 1 $ can deduce that
\begin{align} 
 & \quad -\frac{1}{2}\int_{\mathbb{R}} \bar u_x(\varepsilon E^2+b^2)\,{\rm{d}}x      + \int_\mathbb{R} \varepsilon (v\bar{u}_t-u\bar{u}_x) E b \,{\rm d}x      \nonumber \\[2mm]
 & \leq C \delta^{\frac{5}{6}} \int_{\mathbb{R}} \left( E + \psi b + \bar{u} b  \right)^2  \,\mathrm{d}x     +  C \delta^{\frac{2}{3}}\int_{\mathbb{R}} b^2 \omega^2 \,\mathrm{d}x.      \label{remaining3}
\end{align}
Combining $ \eqref{jiben1} $--$ \eqref{tuidao2} $ and \eqref{remaining3}, then taking $ \delta $ suitably small, together with \eqref{raodongwuqiongguji}, we obtain that
\begin{align} 
& \frac{\mathrm{d}}{\mathrm{d}t} \int_{\mathbb{R}}  \left(  \frac{1}{2} \psi^{2} +  R \bar{\theta} \Phi\left(\frac{v}{\bar v}\right) + \frac{R}{\gamma-1} \bar{\theta} \Phi\left(\frac{\theta}{\bar{\theta}}  \right)         + \frac{1}{2}\varepsilon vE^2       + \frac{1}{2} vb^2      + \varepsilon v \bar{u}Eb  \right) \mathrm{d}x        \nonumber \\[2mm]
& + C\left\|\left(\psi_{x}, \zeta_{x}, E+\psi b+ \bar{u} b \right)\right\|^{2}    \leq   C \delta (1+t)^{-\frac{5}{4}}        +  C \delta \| \phi_x  \|^2       \nonumber \\[2mm]
&\qquad\qquad\qquad\qquad\qquad\qquad\qquad   + C \delta^{\frac{2}{3}} \int_{\mathbb{R}}  \left(\phi^{2} + \zeta^{2} + b^2 \right) \omega^2   \mathrm{d} x.     \label{jibenneng}
\end{align}
We observe from $ \Phi(1)=\Phi'(1)=0 $ and $ \Phi''(s) = {1}/{s^2} $ that
\begin{align}   
  \frac{1}{2} \psi^{2} +  R \bar{\theta} \Phi\left(\frac{v}{\bar v}\right) + \frac{R}{\gamma-1} \bar{\theta} \Phi\left(\frac{\theta}{\bar{\theta}}\right)
  & =  \frac{1}{2} \psi^{2} +  R \bar{\theta} \frac{\phi^2}{2 \bar{v}^2 \xi^2_1 }        + \frac{R}{\gamma-1} \cdot \frac{\zeta^2}{2 \bar{\theta} \xi^2_2}         \nonumber \\[2mm]
  & \geq  C \left( \phi^2 + \psi^2 + \zeta^2 \right),     \label{etajieguo}
\end{align}
where $ \xi_1 $ takes value between $1$ and $ {v}/{\bar{v}} $; $ \xi_2 $ takes value between $1$ and $ {\theta}/{\bar{\theta}} $.
Recall $\left| \bar{u} \right| \leq  \frac{5}{4} \left| u_- \right|  $ in \eqref{contactushangjie}. Thus we can claim the following estimate by using the Cauchy inequality:
\begin{equation}\label{danjifen1}
  \left| \varepsilon v \bar{u} E b  \right|    \leq  v \left| \frac{5}{4} \varepsilon u_- E \cdot b  \right|       \leq  v \left(  \frac{25}{16} \varepsilon ^2 u_-^2 E^2     +  \frac{1}{4}  b^2  \right)       \leq   \frac{25}{64}  \varepsilon v E^2     +    \frac{1}{4} v b^2 ,
\end{equation}
where in the last inequality we have taken $ \varepsilon u_-^2 \leq \frac{1}{4} $.
Hence, by integrating \eqref{jibenneng} with respect to $t$, together with \eqref{etajieguo} and \eqref{danjifen1}, we reach \eqref{jibennengliang}. The proof of Lemma \ref{dijieguji} is completed.
\end{proof}

The following lemma is concerning the time-space integrability property of $\phi^{2}$, $\zeta^{2}$ and $b^2$ with an attached $ \omega^2 $ weight.
\begin{lemma}\label{quanguji}
Suppose that all the conditions in Theorem \ref{thm1} hold. For all $0 < t < T $, there exists a constant $\bar{C}$ depending only on $v_\pm$ and $|u_-|$ such that if $ 0< \varepsilon <\bar{C} $, then
\begin{equation}\label{daiquanguji}
\int_0^t\int_{\mathbb{R}}  \left(\phi^{2} + \psi^2 + \zeta^{2} + b^2 \right)  \omega^2  \,\mathrm{d}x \mathrm{d}\tau
   \leq C + C \int_0^t \| \left( \phi_x, \psi_x, \zeta_x, b_x, E + \psi b + \bar{u} b  \right)  \|^2  \,\mathrm{d}\tau .
\end{equation}
Here $\omega$ is the function defined in \eqref{omegarehe} with $ \alpha = {\hat{c}}/{4} $.

\end{lemma}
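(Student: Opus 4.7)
The plan is to apply Lemma \ref{heatkernel} separately with $h\in\{\phi,\psi,\zeta,vb\}$, invoke the perturbation equations \eqref{raodong} to compute $h_t$, and then sum the four resulting estimates. In each case the key work is to show that the ``bad'' term $8\alpha\int_0^t \langle h_t,hg^2\rangle\,\mathrm{d}\tau$ on the right-hand side of \eqref{reheguji} can be controlled by the quantities appearing on the right-hand side of \eqref{daiquanguji}, using $\|g\|_{L^\infty}\le C$ and $|g_x|=\omega$.

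For $h=\phi$, the equation $\eqref{raodong}_1$ gives $\phi_t=\psi_x$; one integration by parts yields $\langle\phi_t,\phi g^2\rangle=-\int\psi(\phi_xg^2+2\phi g\omega)\,\mathrm{d}x$, which by Cauchy--Schwarz is absorbed into $\int(\|\phi_x\|^2+\|\psi_x\|^2)\,\mathrm{d}\tau$ plus a small multiple of $\int\phi^2\omega^2$ that can be hidden on the left. For $h=\zeta$, $\eqref{raodong}_3$ expresses $\zeta_t$ as a sum of derivative terms (the $\kappa$-term and the $p\psi_x$-term) plus source terms involving $\bar u_x$, $(\psi_x+\bar u_x)^2$ and the crucial quadratic $v(E+\psi b+\bar u b)^2$; pairing against $\zeta g^2$ and using the $L^\infty$-smallness \eqref{raodongwuqiongguji} of $\zeta$ together with the decay \eqref{shuaijianlv1} of $\bar u_x$ lets us bound everything by $\int(\|\psi_x,\zeta_x\|^2+\|E+\psi b+\bar u b\|^2)\,\mathrm{d}\tau$, up to an integrable residue $C\delta\int(1+\tau)^{-5/4}\,\mathrm{d}\tau$. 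For $h=\psi$, the only non-derivative piece of $\psi_t$ from $\eqref{raodong}_2$ is $-v(E+\psi b+\bar u b)b$, which pairs with $\psi g^2$ to give $\|E+\psi b+\bar u b\|\,\|b\|_{L^\infty}\|\psi\|\,\|g\|_{L^\infty}^2$, controlled again by $\int\|E+\psi b+\bar u b\|^2\,\mathrm{d}\tau$ and the already-bounded $\|\psi\|$.

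The crucial step is the $b^2$-estimate. Since $b$ possesses no damping, we exploit the structural identity announced in the introduction: combining $v_t=u_x$ with $\eqref{raodong}_5$ gives
\begin{equation*}
(vb)_t = u_xb + ub_x + E_x = (E+ub)_x = (E+\psi b+\bar u b)_x.
\end{equation*}
Applying Lemma \ref{heatkernel} to $h=vb$, the bad term becomes
\begin{equation*}
8\alpha\int_0^t\langle (E+\psi b+\bar u b)_x,\; vb\,g^2\rangle\,\mathrm{d}\tau = -8\alpha\int_0^t\!\!\int_{\mathbb{R}} (E+\psi b+\bar u b)\bigl(v_xbg^2+vb_xg^2+2vbg\omega\bigr)\mathrm{d}x\mathrm{d}\tau,
\end{equation*}
and Cauchy--Schwarz combined with the upper and lower bounds for $v$ from Lemma \ref{changyongshangxiajie} reduces this to $C\int_0^t(\|E+\psi b+\bar u b\|^2+\|b_x\|^2)\,\mathrm{d}\tau$ plus a small multiple of $\int_0^t\!\int b^2\omega^2$ (from the $2vbg\omega$ piece), which we absorb into the left. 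Since $v$ is bounded below, $\int(vb)^2\omega^2\ge c\int b^2\omega^2$ and the desired bound for $b$ follows.

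Summing the four resulting inequalities and absorbing small multiples of $\int\!\int(\phi^2+\psi^2+\zeta^2+b^2)\omega^2$ from the right into the left yields \eqref{daiquanguji}, with the $C$ on the right accommodating the initial data $\|h_0\|^2$ and the time-integrable residues. The principal obstacle is precisely the $b^2$ term: without the structural rewriting $(vb)_t=(E+\psi b+\bar u b)_x$ one cannot place $b_t$ into the framework of Lemma \ref{heatkernel} since no spatial derivative of a controlled quantity would be available; the above observation is what makes the estimate close.
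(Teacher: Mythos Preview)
Your treatment of the magnetic-field term is essentially right: the structural identity $(vb)_t=(E+\psi b+\bar u b)_x$ is exactly what the paper uses (the paper applies Lemma~\ref{heatkernel} with $h=b$ rather than $h=vb$, but the two are interchangeable up to harmless commutators). The gap is in the $(\phi,\psi,\zeta)$-part.

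Applying Lemma~\ref{heatkernel} with $h=\phi$, $h=\psi$, $h=\zeta$ separately does not close. Take $h=\zeta$: the dominant term in $\zeta_t$ from $\eqref{raodong}_3$ is $-\frac{\gamma-1}{R}\,p\,\psi_x$, and pairing it with $\zeta g^2$ gives, after Cauchy--Schwarz, either $C\|\psi_x\|^2+C\|\zeta\|^2$ or (after one integration by parts) $C\|\zeta_x\|^2+C\|\psi\|^2$. In both cases a factor $\int_0^t\|(\psi,\zeta)\|^2\,\mathrm{d}\tau$ appears, which grows like $t$ under the a~priori bound \eqref{xianyanjiashe} and is \emph{not} dominated by the right side of \eqref{daiquanguji}. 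The same obstruction arises for $h=\psi$ from the pressure term $-\bigl((R\zeta-p_+\phi)/v\bigr)_x$, and for $h=\phi$ your claimed bound ``$\int\psi\phi_xg^2\le C(\|\phi_x\|^2+\|\psi_x\|^2)$'' is false: Cauchy--Schwarz yields $\|\psi\|\,\|\phi_x\|$, and no $\psi_x$ ever appears. The underlying reason is the acoustic coupling between $\psi$ and $(\phi,\zeta)$: naively testing each variable against itself cannot cancel the off-diagonal hyperbolic pieces.

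The paper avoids this by working with the characteristic combinations. It proves \eqref{daiquanguji2} for $(R\zeta-p_+\phi)^2+\psi^2$ by multiplying $\eqref{raodong}_2$ not by $\psi g^2$ but by $(R\zeta-p_+\phi)\,v\,f$ with $f=\int_{-\infty}^x\omega^2$; then the pressure term integrates to $\tfrac12\int(R\zeta-p_+\phi)^2\omega^2$ directly, and the remaining $\psi_t(R\zeta-p_+\phi)vf$ term, after pulling out $\tfrac{d}{dt}$ and writing $(R\zeta-p_+\phi)_t$ via \eqref{bianxin} together with $\phi_t=\psi_x$, produces $-\tfrac12\int\gamma p_+v\psi^2\omega^2$. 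For \eqref{daiquanguji3} the paper applies Lemma~\ref{heatkernel} with $h=R\zeta+(\gamma-1)p_+\phi$, for which \eqref{bianxin} gives $h_t=-(\gamma-1)\frac{R\zeta-p_+\phi}{v}\psi_x+\cdots$; writing $R\zeta-p_+\phi=h-\gamma p_+\phi$ and using $\psi_x=\phi_t$ converts the dangerous product into total time derivatives (see \eqref{j1}--\eqref{j11}). The two estimates together span $\phi^2$ and $\zeta^2$. The moral is that the heat-kernel lemma must be coupled with the hyperbolic structure (diagonalizing the pressure terms and exploiting $\psi_x=\phi_t$), not applied componentwise.
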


\begin{proof}[Proof]
The proof of estimate \eqref{daiquanguji} consists of the following three parts:
\begin{equation}\label{daiquanguji1}
\int_0^t\int_{\mathbb{R}}  b^2 \omega^2  \,\mathrm{d}x \mathrm{d}\tau    \leq C + C \int_0^t \| \left( \phi_x, \psi_x, b_x, E + \psi b + \bar{u} b  \right)  \|^2  \,\mathrm{d}\tau ,
\end{equation}
\begin{align} 
  \int_0^t\int_{\mathbb{R}} \left[ (R \zeta - p_+ \phi)^2  + \psi^2 \right] \omega^2 \,\mathrm{d}x \mathrm{d}\tau
& \leq   C     + C \delta \int_0^t\int_{\mathbb{R}} (\phi^2 + \zeta^2) \omega^2  \,\mathrm{d}x \mathrm{d}\tau     \nonumber \\[2mm]
&   + C \int_0^t  \| \left( \phi_x, \psi_x, \zeta_x, E + \psi b + \bar{u} b  \right)  \|^2  \,\mathrm{d}\tau   \label{daiquanguji2}
\end{align}
and
\begin{align} 
  \int_0^t\int_{\mathbb{R}}   \left[ R \zeta + (\gamma-1)p_+ \phi \right]^2 \omega^2  \,\mathrm{d}x \mathrm{d}\tau
& \leq     C    +  C (\delta + \eta) \int_0^t\int_{\mathbb{R}} (\phi^2 + \zeta^2) \omega^2  \,\mathrm{d}x \mathrm{d}\tau     \nonumber \\[2mm]
&        + C \int_0^t  \| \left( \phi_x, \psi_x, \zeta_x, E + \psi b + \bar{u} b  \right)  \|^2  \,\mathrm{d}\tau .      \label{daiquanguji3}
\end{align}
In fact, adding \eqref{daiquanguji2} to \eqref{daiquanguji3} and taking first $\eta$ then $\delta$ suitably small, and then combining the resulting inequality with \eqref{daiquanguji1} thus implies \eqref{daiquanguji} easily.

We first prove \eqref{daiquanguji1}. Due to $ v_t = u_x $, we can rewrite $\eqref{raodong}_5$ as
\begin{equation}\label{faxian}
  (vb)_t  -  (E + ub)_x  =0.
\end{equation}
Taking $ h = b $ in Lemma \ref{heatkernel} and using \eqref{faxian} gives
  \begin{align} 
  	\left< b_t, b g^2 \right>
    & = \int_{\mathbb{R}} v b_t  \frac{b g^2}{v} \,\mathrm{d}x      = \int_{\mathbb{R}} (vb)_t  \frac{b g^2}{v} \,\mathrm{d}x    - \int_{\mathbb{R}} v_t \frac{b^2 g^2}{v} \,\mathrm{d}x     \nonumber \\[2mm]
    & = \int_{\mathbb{R}}  (E + ub)_x   \frac{b g^2}{v} \,\mathrm{d}x     - \int_{\mathbb{R}} \phi_t \frac{b^2 g^2}{v} \,\mathrm{d}x      - \int_{\mathbb{R}} \bar{v}_t  \frac{b^2 g^2}{v} \,\mathrm{d}x      \nonumber \\[2mm]
    & =    - \frac{\mathrm{d}}{\mathrm{d}t} \int_{\mathbb{R}} \phi \frac{b^2 g^2}{v} \,\mathrm{d}x      + \int_{\mathbb{R}} \phi \left( \frac{b^2}{v} \right)_t g^2  \,\mathrm{d}x     + \int_{\mathbb{R}} 2 \phi \frac{b^2}{v} g g_t \,\mathrm{d}x       \nonumber \\[2mm]
    &  \quad\, + \int_{\mathbb{R}}  (E + ub)_x   \frac{bg^2}{v} \,\mathrm{d}x    - \int_{\mathbb{R}} \bar{u}_x  \frac{b^2 g^2}{v} \,\mathrm{d}x       \nonumber \\[2mm]
    & =: - \frac{\mathrm{d}}{\mathrm{d}t} \int_{\mathbb{R}} \phi \frac{b^2 g^2}{v} \,\mathrm{d}x       + \sum_{i=1}^4 N_i,      \label{neiji}
  \end{align}
where $N_i (1 \leq i \leq 4)$ denote the corresponding terms of the last equation of \eqref{neiji}. We use \eqref{faxian} again to get
\begin{align} 
  N_1   & = \int_{\mathbb{R}} \phi \left( \frac{b^2}{v} \right)_t g^2  \,\mathrm{d}x     = \int_{\mathbb{R}} \phi\frac{2bb_t }{v} g^2 \,\mathrm{d}x   - \int_{\mathbb{R}} \phi b^2 \frac{v_t}{v^2} g^2  \,\mathrm{d}x     \nonumber \\[2mm]
 & =  2 \int_{\mathbb{R}} \frac{\phi b (v b)_t }{v^2} g^2 \,\mathrm{d}x    - 3 \int_{\mathbb{R}} \phi b^2 \frac{v_t}{v^2} g^2 \,\mathrm{d}x     \nonumber \\[2mm]
 & =  2 \int_{\mathbb{R}}  (E + ub)_x  \frac{\phi b g^2}{v^2}  \,\mathrm{d}x      - 3 \int_{\mathbb{R}} \phi b^2 \frac{u_x}{v^2} g^2 \,\mathrm{d}x       = : N_{1,1}   +  N_{1,2}.    \label{n0}
\end{align}
Recall $\left| \bar{v}_x \right|  \leq C \delta \omega $ due to  $ \alpha = {\hat{c}}/{4} $. By applying integration by parts, the Sobolev inequality \eqref{lwuqiong}, the Cauchy inequality and \eqref{raodongwuqiongguji}, one has
\begin{align} 
  N_{1,1}
  & =  - 2 \int_{\mathbb{R}}  (E + ub) \cdot \left( \frac{\phi b g^2}{v^2} \right)_x  \,\mathrm{d}x      \nonumber \\[2mm]
  & \leq   C \int_{\mathbb{R}} \left| E + ub \right| \cdot  \left( \left| \phi_x b \right|  +  \left| \phi b_x \right|  + \left| \phi b \omega  \right|   +  \left|  \phi b \phi_x \right|   +  \left| \phi b \bar{v}_x \right|  \right)  \,\mathrm{d}x      \nonumber \\[2mm]
  & \leq C \int_{\mathbb{R}} \left[  \left( E + \psi b + \bar{u} b  \right)^2    +  \phi_x^2  + b_x^2  + \varepsilon_0^2 b^2 \omega^2    \right]  \,\mathrm{d}x     \label{n11}
\end{align}
and
\begin{align}
  N_{1,2} & = - 3 \int_{\mathbb{R}} \phi b^2 \frac{u_x}{v^2} g^2 \,\mathrm{d}x       \leq  C \int_{\mathbb{R}} \left( \left| \phi b^2 \psi_x \right|        + \left| \phi b^2 \bar{u}_x \right|   \right)   \,\mathrm{d}x      \nonumber \\[2mm]
  & \leq  C \int_{\mathbb{R}}  \left( \psi_x^2     +  \bar{u}_x^2     + \phi^2 b^4   \right)     \,\mathrm{d}x      \leq  C  \| \psi_x \|^2      + C \delta (1+t)^{-\frac{3}{2}}     + C \| b \|_{L^\infty}^4 \| \phi \|^2     \nonumber \\[2mm]
  & \leq  C  \| (\psi_x,  b_x) \|^2     + C (1+t)^{-\frac{3}{2}} .     \label{n12}
\end{align}
Putting \eqref{n11} and \eqref{n12} into \eqref{n0} gives
\begin{equation}\label{n1}
  N_1  \leq  C (1+t)^{-\frac{3}{2}}   +  C \|\left( \phi_x, \psi_x, b_x, E + \psi b + \bar{u} b \right)\|^2    + C \varepsilon_0^2 \int_{\mathbb{R}} b^2 \omega^2 \,\mathrm{d}x .
\end{equation}
Similar to the calculation of \eqref{n11} and noting that $\left| \bar{u}_x \right| \leq C \delta \omega^2 $ since $ \alpha = {\hat{c}}/{4} $, we obtain
\begin{align} 
  N_3  +  N_4
  & = - \int_{\mathbb{R}}  (E + ub) \left( \frac{bg^2}{v} \right)_x  \,\mathrm{d}x       - \int_{\mathbb{R}} \bar{u}_x  \frac{b^2 g^2}{v} \,\mathrm{d}x     \nonumber \\[2mm]
  & \leq C \int_{\mathbb{R}} \left| E + ub \right| \cdot \left( \left| b_x \right|   + \left| b \omega \right|   +  \left| b \phi_x \right|       + \left| b \bar{v}_x \right|  \right)    \,\mathrm{d}x          + C \delta \int_{\mathbb{R}} b^2 \omega^2 \,\mathrm{d}x     \nonumber \\[2mm]
  & \leq  C \| \left(  \phi_x, b_x, E + \psi b + \bar{u} b  \right) \|^2      + C (\eta + \delta) \int_{\mathbb{R}} b^2 \omega^2 \,\mathrm{d}x.      \label{n3n4}
\end{align}
By using $ 4 \alpha g_t = \omega_x $ in $\eqref{gtguanxi}$, the Sobolev inequality \eqref{lwuqiong} and \eqref{raodongwuqiongguji}, we can get
\begin{align}
  N_2
  &  =      \int_{\mathbb{R}} 2 \phi \frac{b^2}{v} g g_t \,\mathrm{d}x
     = \int_{\mathbb{R}}  \frac{2}{\hat{c}}  \frac{\phi b^2}{v} g \omega _x \,\mathrm{d}x
     \leq C \| \phi \|_{L^{\infty}} \| b \|_{L^{\infty}}^2 \int_{\mathbb{R}} \left| \omega_x \right|  \,\mathrm{d}x      \nonumber \\[2mm]
  &  \leq C  (1+t)^{-\frac{1}{2}}  \| \phi \|^{\frac{1}{2}}   \| \phi_x \|^{\frac{1}{2}}   \| b \| \cdot \| b_x \|   \leq C \varepsilon_0^\frac{3}{2}  \left(   (1+t)^{-\frac{1}{2}}     \| \phi_x \|^{\frac{1}{2}}     \| b_x \|   \right)        \nonumber \\[2mm]
  &  \leq C \left(   (1+t)^{-2}     + \| \phi_x \|^{2}       +  \| b_x \|^2   \right),      \label{n2}
\end{align}
where we have used the simple equality: $ \int_{\mathbb{R}}  \left| \omega_x \right| \,\mathrm{d}x = 2(1+t)^{-\frac{1}{2}}  $.

Plugging $\eqref{n1}$--$\eqref{n2}$ into \eqref{neiji} gives
  \begin{align} 
  	\left< b_t, b g^2 \right>
  	& \leq - \frac{\mathrm{d}}{\mathrm{d}t} \int_{\mathbb{R}} \phi \frac{b^2 g^2}{v} \,\mathrm{d}x      + C \| \left( \phi_x, \psi_x, b_x,  E + \psi b + \bar{u} b  \right) \|^2      \nonumber \\[2mm]
  	& \quad\,       + C (1+t)^{-\frac{3}{2}}     + C (\eta + \varepsilon_0^2  + \delta) \int_{\mathbb{R}} b^2 \omega^2 \,\mathrm{d}x.     \label{neijijieguo}
  \end{align}
By employing \eqref{reheguji} of Lemma \ref{heatkernel} and using \eqref{neijijieguo}, together with choosing first $ \eta $ then $ \varepsilon_0 $, $ \delta $ suitably small, we arrive at \eqref{daiquanguji1}.

Next we prove \eqref{daiquanguji2} by employing the similar arguments as in \cite{huangfm2010}. Define
$$ f(x,t) = \int_{-\infty}^x \omega^2(y,t)  \,\mathrm{d}y.$$
We have
\begin{equation*}
\| f(\cdot,t) \|_{L^{\infty}} \leq {2 \alpha}^{-\frac{1}{2}} (1+t)^{-\frac{1}{2}} , \qquad \| f_t(\cdot,t) \|_{L^{\infty}} \leq {4 \alpha^{-\frac{1}{2}}} (1+t)^{-\frac{3}{2}}.
\end{equation*}
Multiplying $\eqref{raodong}_2$ by $ (R \zeta - p_+ \phi)vf $ and integrating the resulting equation with respect to $ x $ over $ \mathbb{R} $ gives that
\begin{small}
\begin{align}
  &\qquad \frac{1}{2} \int_{\mathbb{R}} (R \zeta - p_+ \phi)^2 \omega^2 \,\mathrm{d}x        \nonumber \\[2mm]
  & = \int_{\mathbb{R}} \psi_t (R \zeta - p_+ \phi) v f \,\mathrm{d}x        - \int_{\mathbb{R}} \frac{(R \zeta - p_+ \phi)^2}{v} \phi_x f \,\mathrm{d}x    - \int_{\mathbb{R}} \frac{(R \zeta - p_+ \phi)^2}{v} \bar{v}_x f \,\mathrm{d}x      \nonumber \\[2mm]
  & \quad  - \mu \int_{\mathbb{R}} \left( \frac{\psi_x}{v} \right)_x (R \zeta - p_+ \phi) v f  \,\mathrm{d}x    + \mu \int_{\mathbb{R}} \left( \frac{\phi \bar{u}_x}{v \bar{v}} \right)_x (R \zeta - p_+ \phi) v f  \,\mathrm{d}x       \nonumber \\[2mm]
  & \quad  + \int_{\mathbb{R}} \left[ \bar{u}_t - \mu \left( \frac{\bar{u}_x}{\bar{v}} \right)_x  \right] (R \zeta - p_+ \phi) v f \,\mathrm{d}x       +  \int_{\mathbb{R}} v (E + \psi b + \bar{u} b )b \cdot(R \zeta - p_+ \phi) v f \,\mathrm{d}x          \nonumber \\[2mm]
  & = \frac{\mathrm{d}}{\mathrm{d}t} \int_{\mathbb{R}} \psi (R \zeta - p_+ \phi)vf \,\mathrm{d}x        -\int_{\mathbb{R}} \psi (R \zeta - p_+ \phi)_t v f \,\mathrm{d}x         - \int_{\mathbb{R}} \psi v_t (R \zeta - p_+ \phi) f \,\mathrm{d}x         \nonumber \\[2mm]
  & \quad - \int_{\mathbb{R}} \psi(R \zeta - p_+ \phi) v f_t \,\mathrm{d}x
  - \int_{\mathbb{R}} \frac{(R \zeta - p_+ \phi)^2}{v} \phi_x f \,\mathrm{d}x       - \int_{\mathbb{R}} \frac{(R \zeta - p_+ \phi)^2}{v} \bar{v}_x f \,\mathrm{d}x       \nonumber \\[2mm]
  & \quad  - \mu \int_{\mathbb{R}} \left( \frac{\psi_x}{v} \right)_x (R \zeta - p_+ \phi) v f  \,\mathrm{d}x    + \mu \int_{\mathbb{R}} \left( \frac{\phi \bar{u}_x}{v \bar{v}} \right)_x (R \zeta - p_+ \phi) v f  \,\mathrm{d}x        \nonumber \\[2mm]
  & \quad  + \int_{\mathbb{R}} \left[ \bar{u}_t - \mu \left( \frac{\bar{u}_x}{\bar{v}} \right)_x  \right] (R \zeta - p_+ \phi) v f \,\mathrm{d}x        +  \int_{\mathbb{R}}  (E + \psi b + \bar{u} b )b \cdot(R \zeta - p_+ \phi) v^2 f \,\mathrm{d}x           \nonumber \\[2mm]
  & = : \frac{\mathrm{d}}{\mathrm{d}t} \int_{\mathbb{R}} \psi (R \zeta - p_+ \phi)vf \,\mathrm{d}x           + \sum_{i=1}^9 I_i.      \label{shizi1}
\end{align}
\end{small}
Equations $\eqref{raodong}_1$ and $\eqref{raodong}_3$ imply that
\begin{align} 
  \left( \frac{R}{\gamma-1}\zeta + p_+ \phi \right) _t
  &=  - \frac{R \zeta - p_+ \phi}{v} \left( \psi_x + \bar{u}_x  \right)         + \kappa \left( \frac{\bar{v} \zeta_x - \phi \bar{\theta}_x}{v \bar{v}} \right)_x          \nonumber \\[2mm]
  & \quad + \mu \frac{(\psi_x + \bar{u}_x)^2}{v}   + v(E + \psi b + \bar{u} b )^2.      \label{bianxin}
\end{align}
Then applying equation \eqref{bianxin} yields that
\begin{small}
\begin{align}
  I_1
  & =  -\int_{\mathbb{R}} \psi (R \zeta - p_+ \phi)_t v f \,\mathrm{d}x        \nonumber \\[2mm]
  & =  \int_{\mathbb{R}} \psi  \gamma p_+ \phi_t v f \,\mathrm{d}x          -\int_{\mathbb{R}} \psi \left[ R \zeta   + (\gamma-1)p_+ \phi  \right] _t v f \,\mathrm{d}x          \nonumber \\[2mm]
  & =  \int_{\mathbb{R}} \psi  \gamma p_+ \psi_x  v f \,\mathrm{d}x                -(\gamma-1) \int_{\mathbb{R}}  \psi vf \left( \frac{R}{\gamma-1}\zeta + p_+ \phi \right) _t    \,\mathrm{d}x         \nonumber \\[2mm]
  & =  -\frac{1}{2}  \int_{\mathbb{R}} \gamma p_+ v \psi^2 \omega^2 \,\mathrm{d}x          - \frac{1}{2} \int_{\mathbb{R}} \gamma p_+ v_x f \psi^2  \,\mathrm{d}x       + (\gamma-1) \int_{\mathbb{R}} \psi f (R \zeta - p_+ \phi) \psi_x  \,\mathrm{d}x      \nonumber \\[2mm]
  &  \quad     + (\gamma-1) \int_{\mathbb{R}} \psi f (R \zeta - p_+ \phi) \bar{u}_x  \,\mathrm{d}x        -(\gamma-1) \int_{\mathbb{R}} \psi vf \kappa \left( \frac{\bar{v} \zeta_x - \phi \bar{\theta}_x}{v \bar{v}} \right)_x   \,\mathrm{d}x            \nonumber \\[2mm]
  & \quad -(\gamma-1) \int_{\mathbb{R}} \psi f \mu \left( \psi_x + \bar{u}_x \right)^2  \,\mathrm{d}x           - (\gamma-1) \int_{\mathbb{R}} \psi f v^2 (E + \psi b + \bar{u} b )^2 \,\mathrm{d}x        \nonumber \\[2mm]
  & = : -\frac{1}{2}  \int_{\mathbb{R}} \gamma p_+ v \psi^2 \omega^2 \,\mathrm{d}x      +   \sum_{i=1}^{6} I_{1,i} .      \label{i1}
\end{align}
\end{small}
Comparing with the Navier-Stokes equations, we need to treat the additional terms as follows:
\begin{small}
\begin{align}
  &\qquad  I_9 + I_{1,6}   \nonumber \\[2mm]
  & =     \int_{\mathbb{R}}  (E + \psi b + \bar{u} b )b \cdot(R \zeta - p_+ \phi) v^2 f \,\mathrm{d}x     - (\gamma-1) \int_{\mathbb{R}} \psi f v^2 (E + \psi b + \bar{u} b )^2 \,\mathrm{d}x       \nonumber \\[2mm]
  & \leq  C \int_{\mathbb{R}} \left| E + \psi b + \bar{u} b  \right|\cdot \left| b \right| \cdot \left( \left| \zeta \right|  + \left| \phi \right|  \right) (1+t)^{-\frac{1}{2}}  \,\mathrm{d}x    +  C \int_{\mathbb{R}} (E + \psi b + \bar{u} b )^2 \,\mathrm{d}x      \nonumber \\[2mm]
  & \leq C \| E + \psi b + \bar{u} b  \|^2     + C \int_{\mathbb{R}} b^2 \left( \zeta^2   +   \phi^2 \right) (1+t)^{-1}  \,\mathrm{d}x      \nonumber \\[3mm]
  & \leq C \| E + \psi b + \bar{u} b  \|^2     + C (1+t)^{-1}  \| b \|^2 \left( \| \zeta \|^2_{L^{\infty}}   + \| \phi \|^2_{L^{\infty}} \right)       \nonumber \\[4mm]
  & \leq C \| E + \psi b + \bar{u} b  \|^2     + C (1+t)^{-1}  \| b \|^2 \left( \| \zeta \| \| \zeta_x \| + \| \phi \| \| \phi_x \|   \right)          \nonumber \\[4mm]
  & \leq C \| E + \psi b + \bar{u} b  \|^2     + C \left( (1+t)^{-2}      + \| \zeta_x \|^2     +\| \phi_x \|^2   \right).       \label{duochu1}
\end{align}
\end{small}
Performing the similar computations as in \cite{huangfm2010} to the remaining terms of \eqref{shizi1} and \eqref{i1},
then integrating the resulting inequalities and \eqref{duochu1} with respect to $ t $ leads to \eqref{daiquanguji2}.

At last, we prove \eqref{daiquanguji3}. Taking $ h = R \zeta + (\gamma -1 )p_+ \phi $ in Lemma \ref{heatkernel} and using \eqref{bianxin} deduces that
\begin{align} 
   \left< h_t, h g^2 \right>
   & = \int_{\mathbb{R}} (\gamma -1) \left( \frac{R}{\gamma - 1}\zeta + p_+ \phi \right)_t \cdot h g^2  \,\mathrm{d}x    \nonumber \\[2mm]
   & = -\int_{\mathbb{R}} (\gamma -1 )\frac{R \zeta - p_+ \phi}{v} \psi_x h g^2 \,\mathrm{d}x       -\int_{\mathbb{R}} (\gamma -1 )\frac{R \zeta - p_+ \phi}{v} \bar{u}_x h g^2 \,\mathrm{d}x     \nonumber \\[2mm]
   & \quad     + \kappa \int_{\mathbb{R}} (\gamma -1)\left( \frac{\bar{v} \zeta_x - \phi \bar{\theta}_x}{v \bar{v}} \right)_x h g^2  \,\mathrm{d}x        + \int_{\mathbb{R}} (\gamma-1) \mu \frac{\left( \psi_x + \bar{u}_x \right)^2 }{v} hg^2 \,\mathrm{d}x        \nonumber \\[2mm]
   & \quad     + \int_{\mathbb{R}} (\gamma-1)v \left( E + \psi b + \bar{u} b  \right)^2 hg^2  \,\mathrm{d}x    =: \sum_{i=1}^{5} J_i.     \label{neiji2}
\end{align}
Noticing that $R \zeta - p_+ \phi = h - \gamma p_+ \phi $ and using $ \psi_x = \phi_t$, we can compute $ J_1 $ as follows:
\begin{align}    
  -\frac{1}{\gamma-1} J_1
  & =  \int_{\mathbb{R}} \frac{R \zeta - p_+ \phi}{v} \phi_t h g^2  \,\mathrm{d}x     = \int_{\mathbb{R}} \frac{ (h - \gamma p_+ \phi) h \phi_t g^2   }{v} \,\mathrm{d}x    \nonumber \\[2mm]
  & =  \int_{\mathbb{R}} \frac{h^2 \phi_t g^2}{v} \,\mathrm{d}x   -  \int_{\mathbb{R}} \frac{\gamma p_+ \frac{1}{2} \left( \phi^2 \right)_t  h g^2  }{v}  \,\mathrm{d}x      \nonumber \\[2mm]
  & =   \frac{\mathrm{d}}{\mathrm{d}t} \int_{\mathbb{R}} \frac{h \phi g^2 \left( h - \frac{1}{2} \gamma p_+ \phi \right) }{v}   \,\mathrm{d}x        -\int_{\mathbb{R}} \frac{h_t \phi g^2  \left( 2h - \frac{1}{2} \gamma p_+ \phi \right)  }{v} \,\mathrm{d}x        \nonumber \\[2mm]
  & \quad    -\frac{1}{2\alpha} \int_{\mathbb{R}} \frac{h \phi g \omega_x \left( h - \frac{1}{2} \gamma p_+ \phi \right) }{v} \,\mathrm{d}x        + \int_{\mathbb{R}} \frac{h \phi g^2 u_x \left( h - \frac{1}{2} \gamma p_+ \phi \right) }{v^2} \,\mathrm{d}x       \nonumber \\[2mm]
  & =:  \frac{\mathrm{d}}{\mathrm{d}t} \int_{\mathbb{R}} \frac{h \phi g^2 \left( h - \frac{1}{2} \gamma p_+ \phi \right) }{v}   \,\mathrm{d}x       + \sum_{i=1}^{3} J_{1,i}.    \label{j1}
\end{align}
Using \eqref{bianxin} again gives
\begin{align}   
  J_{1,1}
   & =    -\int_{\mathbb{R}} (\gamma -1) \left( \frac{R}{\gamma - 1}\zeta + p_+ \phi \right)_t  \frac{ \phi g^2  \left( 2h - \frac{1}{2} \gamma p_+ \phi \right)  }{v} \,\mathrm{d}x        \nonumber \\[2mm]
   & =     \int_{\mathbb{R}} (\gamma -1)  (R \zeta - p_+ \phi) \left( \psi_x + \bar{u}_x  \right)           \frac{ \phi g^2  \left( 2h - \frac{1}{2} \gamma p_+ \phi \right)  }{v^2} \,\mathrm{d}x        \nonumber \\[2mm]
   &\quad   -  \int_{\mathbb{R}} (\gamma -1)   \kappa \left( \frac{\bar{v} \zeta_x - \phi \bar{\theta}_x}{v \bar{v}} \right)_x      \frac{ \phi g^2  \left( 2h - \frac{1}{2} \gamma p_+ \phi \right)  }{v} \,\mathrm{d}x       \nonumber \\[2mm]
   &\quad   -  \int_{\mathbb{R}} (\gamma -1)  \mu  (\psi_x + \bar{u}_x)^2  \frac{ \phi g^2  \left( 2h - \frac{1}{2} \gamma p_+ \phi \right)  }{v^2} \,\mathrm{d}x       \nonumber \\[2mm]
   &\quad   -  \int_{\mathbb{R}} (\gamma -1)  (E + \psi b + \bar{u} b )^2    \phi g^2  \left( 2h - \frac{1}{2} \gamma p_+ \phi \right)  \,\mathrm{d}x       =: \sum_{i=1}^{4} J_{1,1,i}.     \label{j11}
\end{align}
Compared with the Navier-Stokes equations, the additional terms $ J_5 $ and $J_{1,1,4}$ can be processed as follows:
\begin{align} 
  J_5 + J_{1,1,4}
  & = \int_{\mathbb{R}} (\gamma-1)v \left( E + \psi b + \bar{u} b  \right)^2 hg^2  \,\mathrm{d}x       \nonumber \\[2mm]
  & \quad -  \int_{\mathbb{R}} (\gamma -1)  (E + \psi b + \bar{u} b )^2    \phi g^2  \left( 2h - \frac{1}{2} \gamma p_+ \phi \right)  \,\mathrm{d}x     \nonumber \\[2mm]
  & \leq C \| E + \psi b + \bar{u} b  \|^2.   \label{duochu2}
\end{align}
By using the similar way as in \cite{huangfm2010} to estimate the remaining terms of $\eqref{neiji2}$--$\eqref{j11}$,  then integrating the resulting inequalities and \eqref{duochu2} with respect to $ t $, together with Lemma \ref{heatkernel}, we reach \eqref{daiquanguji3}. Therefore, the proof of Lemma \ref{quanguji} is completed.
\end{proof}


Now we combine \eqref{jibennengliang} and \eqref{daiquanguji} then choose $ \delta $ suitably small to obtain
  \begin{align} 
 &\qquad  \| (\phi, \psi, \zeta, \sqrt{\varepsilon}E, b) \|^2         + \int_0^t   \| ( \psi_x, \zeta_x, E+\psi b+\bar{u}b ) \|^2  \,\mathrm{d}\tau    \nonumber \\[2mm]
 & \leq C\| (\phi_0, \psi_0, \zeta_0, E_0, b_0) \|^2 + C\delta^{\frac{2}{3}}    + C\delta^{\frac{2}{3}} \int_0^t \| \left( \phi_x, b_x \right)  \|^2  \,\mathrm{d}\tau     \label{jibennenglianggai}
  \end{align}
and
\begin{equation}\label{daiquangujigai}
\int_0^t\int_{\mathbb{R}}  \left(\phi^{2}  + \psi^2  + \zeta^{2} + b^2 \right)  \omega^2  \,\mathrm{d}x \mathrm{d}\tau
\leq C + C \int_0^t \| \left( \phi_x, b_x \right)  \|^2  \,\mathrm{d}\tau .
\end{equation}
In the following lemma, we control the energy $\|(\sqrt{\varepsilon}E_x,b_x)\|^2$.
\begin{lemma} \label{exbxyijiedao}
Suppose that all the conditions in Theorem \ref{thm1} hold. For all $0 < t < T $, there exists a constant $\bar{C}$ depending only on $v_\pm$ and $|u_-|$ such that if $ 0< \varepsilon <\bar{C} $, then
\begin{align} 
&\qquad \|(\sqrt{\varepsilon}E_x,b_x)\|^2+\int_0^t\|(E_x,b_x)\|^2{\rm{d}}\tau     \nonumber \\[2mm]
& \leq       C \|(\phi_0, \psi_0, \zeta_0)\|^2       + C \|( E_0, b_0)\|_{H^1}^2         +  C\delta^\frac{2}{3}       + C \left( \delta^{\frac{2}{3}}  +  \varepsilon _0 \right) \int_0^t \| \phi_x \|^2  \,\mathrm{d}\tau .    \label{exbx}
\end{align}
\end{lemma}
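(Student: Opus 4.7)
The plan is to derive first-order energy estimates for the electromagnetic field by differentiating the Maxwell equations $\eqref{raodong}_4$ and $\eqref{raodong}_5$ in $x$, multiplying respectively by $E_x$ and $b_x$, summing, and integrating over $\mathbb{R}$. Under integration by parts, the natural quadratic form $\tfrac{1}{2}(\varepsilon E_x^2 + b_x^2)$ appears under $\partial_t$; the dissipation $\|E_x\|^2$ is produced by the $+E$ term in $\eqref{raodong}_4$; and the cross-derivative contributions from $\int E_x(\tfrac{1}{v}b_x)_x\,dx$ and $\int b_x(\tfrac{1}{v}E_x)_x\,dx$ largely cancel via symmetric integration by parts, leaving only a nonlinear remainder of the form $\int \tfrac{v_x}{v^2}E_x b_x\,dx$. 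The error terms involving weights such as $(u/v)_x$, $v_x/v^2$, and $\bar u_x$ are controlled by combining the smallness bounds $\|\phi_x\|_{L^\infty} + \|\psi_x\|_{L^\infty} \le C\varepsilon_0$ (obtained from the Sobolev inequality \eqref{lwuqiong} together with the $H^2$ a priori assumption \eqref{xianyanjiashe}) with the decay rates of the viscous contact wave from Lemma \ref{jianduanboshuaijian}; this yields the $C(\delta^{2/3} + \varepsilon_0)\int_0^t \|\phi_x\|^2\,d\tau$ contribution on the right of \eqref{exbx}.

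The principal obstacle is the lack of any direct dissipation for $\|b_x\|^2$, coupled with the bad term $-\int E_x(u_x b + u b_x)\,dx$, whose most dangerous piece $-\int E_x \bar u b_x\,dx$ produces, via Cauchy-Schwarz, a contribution $Cu_-^2\|b_x\|^2$ with a potentially large coefficient. To overcome this, I would exploit the algebraic structure of $\eqref{raodong}_4$: multiplying by $v$ gives
\begin{equation*}
b_x = \varepsilon v E_t - \varepsilon u E_x + v(E + \psi b + \bar u b).
\end{equation*}
Squaring and integrating, then re-inserting $\varepsilon E_t = \varepsilon\tfrac{u}{v}E_x + \tfrac{1}{v}b_x - (E + \psi b + \bar u b)$ from the same equation to bound $\varepsilon^2\|E_t\|^2$, and finally invoking the smallness condition \eqref{jiedianchangshuxiao} to absorb the resulting $C\varepsilon^2\|b_x\|^2$ term, one arrives at the pointwise-in-time bound
\begin{equation*}
\|b_x\|^2 \le C\varepsilon^2 u_-^2 \|E_x\|^2 + C\|E + \psi b + \bar u b\|^2.
\end{equation*}
Plugging this estimate into the control of the bad term gives a contribution $C\varepsilon^2 u_-^4\|E_x\|^2 + Cu_-^2\|E + \psi b + \bar u b\|^2$, whose first piece is absorbed into the $\|E_x\|^2$ dissipation on the left as long as $\varepsilon u_-^2$ is small enough; this is precisely what determines $\bar C$ in Remark \ref{remark1}.

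Finally, integrating the resulting energy inequality in $t$ and applying Lemma \ref{dijieguji} to bound $\int_0^t \|E + \psi b + \bar u b\|^2\,d\tau$ produces the estimate for $\|(\sqrt{\varepsilon}E_x, b_x)\|^2 + \int_0^t \|E_x\|^2\,d\tau$. Time-integrating the pointwise bound on $\|b_x\|^2$ then adds $\int_0^t \|b_x\|^2\,d\tau$ on the left, completing the proof of \eqref{exbx}. The hard part is manufacturing an effective $\|b_x\|^2$ estimate despite the absence of direct damping in the $b$-equation; this is achieved by reading the Maxwell relation $\eqref{raodong}_4$ as an algebraic identity that expresses $b_x$ through $E_t$, $E_x$, and the compound good term $E + \psi b + \bar u b$, and then closing the loop by using the $\varepsilon$-smallness.
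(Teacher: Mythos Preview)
Your first stage---differentiating the Maxwell equations, forming the quadratic form $\tfrac12(\varepsilon v E_x^2+vb_x^2)$, extracting the dissipation $\|E_x\|^2$, and identifying the dangerous term $\sim u_-^2\|b_x\|^2$ from $-\int \bar u b_x E_x\,dx$---is correct and matches the paper (the paper uses weights $vE_x$, $vb_x$ rather than $E_x$, $b_x$, which is cosmetic).

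The gap is in your mechanism for controlling $\|b_x\|^2$. From $\eqref{raodong}_4$ you correctly write
\[
b_x=\varepsilon v E_t-\varepsilon u E_x+v(E+\psi b+\bar u b),
\]
but then you propose to bound $\varepsilon^2\|E_t\|^2$ by ``re-inserting'' the \emph{same} equation, namely $\varepsilon E_t=\varepsilon\tfrac{u}{v}E_x+\tfrac{1}{v}b_x-(E+\psi b+\bar u b)$. This is circular: the $\tfrac{1}{v}b_x$ term carries no factor of $\varepsilon$, so after substitution you get $\|b_x\|^2\le C\|b_x\|^2+\dots$ with $C\ge 1$, not $C\varepsilon^2\|b_x\|^2$. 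There is no way to obtain a pointwise-in-time bound of the form $\|b_x\|^2\le C\varepsilon^2 u_-^2\|E_x\|^2+C\|E+\psi b+\bar u b\|^2$ from $\eqref{raodong}_4$ alone; such a bound would force $\|vE_t-uE_x\|\le C|u_-|\,\|E_x\|$, which is false in general.

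The paper's fix is differential, not algebraic: multiply $\eqref{raodong}_4$ by $-vb_x$, integrate, and use $\eqref{raodong}_5$ to rewrite $b_{xt}$. This produces the identity
\[
-\frac{d}{dt}\int_{\mathbb R}\varepsilon v E\,b_x\,dx+\tfrac12\|b_x\|^2\le \varepsilon\|E_x\|^2+C\|E+\psi b+\bar u b\|^2+\text{(lower order)},
\]
i.e.\ a genuine $\|b_x\|^2$ dissipation at the price of an $\varepsilon\|E_x\|^2$ term and a harmless time-derivative of a cross term. Adding a suitable multiple of this to the $E_x$--$b_x$ energy identity, the $\varepsilon\|E_x\|^2$ is absorbed by the $\|E_x\|^2$ dissipation precisely when $\varepsilon u_-^2<\frac{\min\{v_\pm\}}{64\max\{v_\pm\}}$, which is where the constant $\bar C$ in \eqref{barc} comes from. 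Your outline needs this cross-term trick (or an equivalent) in place of the circular pointwise bound.
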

\begin{proof}[Proof]
Firstly, taking the derivative of $ \eqref{raodong}_4 $ with respect to $ x $ and multiplying it by $ vE_x $, then integrating the resulting equality with respect to $ x $, we obtain
\begin{align} 
&\qquad \frac{\mathrm{d}}{\mathrm{d}t} \int_{\mathbb{R}} \frac{1}{2}\varepsilon vE_x^2 \,\mathrm{d}x + \int_{\mathbb{R}} vE_x^2 \,\mathrm{d}x     \nonumber \\[2mm]
&= \int_{\mathbb{R}} b_{xx}E_x \,\mathrm{d}x  + \int_{\mathbb{R}} \varepsilon u_x E_x^2 \,\mathrm{d}x  -\int_{\mathbb{R}} \varepsilon\frac{u}{v}v_x E_x^2 \,\mathrm{d}x      \nonumber \\[2mm]
&\quad - \int_{\mathbb{R}} \frac{v_x}{v}b_xE_x \,\mathrm{d}x - \int_{\mathbb{R}} (\psi b)_x vE_x\,\mathrm{d}x - \int_{\mathbb{R}} (\bar{u}b)_x vE_x\,\mathrm{d}x.     \label{Exbx1}
\end{align}
Secondly, taking the derivative of $ \eqref{raodong}_5 $ with respect to $ x $ and multiplying $ vb_x $, then integrating the resulting equality with respect to $ x $, we get
\begin{equation}\label{Exbx2}
\frac{\mathrm{d}}{\mathrm{d}t} \int_{\mathbb{R}} \frac{1}{2}vb_x^2 \,\mathrm{d}x = -\int_{\mathbb{R}} b_{xx}E_x \,\mathrm{d}x + \int_{\mathbb{R}} u_x b_x^2 \,\mathrm{d}x -\int_{\mathbb{R}} \frac{u}{v}v_x b_x^2 \,\mathrm{d}x - \int_{\mathbb{R}} \frac{v_x}{v}b_xE_x \,\mathrm{d}x.
\end{equation}
Combining $ \eqref{Exbx1} $ and $ \eqref{Exbx2} $, we obtain
\begin{align} 
&\qquad \frac{\mathrm{d}}{\mathrm{d}t} \int_{\mathbb{R}} \left( \frac{1}{2}\varepsilon vE_x^2 + \frac{1}{2}v b_x^2 \right)  \,\mathrm{d}x + \int_{\mathbb{R}} vE_x^2 \,\mathrm{d}x     \nonumber \\[2mm]
&= \int_{\mathbb{R}} \varepsilon u_x E_x^2 \,\mathrm{d}x  -\int_{\mathbb{R}} \varepsilon\frac{u}{v}v_x E_x^2 \,\mathrm{d}x + \int_{\mathbb{R}} u_x b_x^2 \,\mathrm{d}x - \int_{\mathbb{R}} \frac{u}{v} v_x b_x^2 \,\mathrm{d}x        \nonumber \\[2mm]
&\quad -2 \int_{\mathbb{R}} \frac{v_x}{v}b_xE_x \,\mathrm{d}x - \int_{\mathbb{R}} (\psi b)_x vE_x\,\mathrm{d}x - \int_{\mathbb{R}} (\bar{u}b)_x vE_x\,\mathrm{d}x.      \label{Exbx3}
\end{align}
Recall that $\left| \bar{u} \right| \leq  \frac{5}{4} \left| u_- \right|  $.
By using the decay rate of viscous contact wave as in $\eqref{shuaijianlv1}$, together with $\eqref{raodongwuqiongguji}$, the Sobolev inequality \eqref{lwuqiong} and the Cauchy inequality, we can derive from \eqref{Exbx3} that
\begin{align} 
&\qquad \frac{\mathrm{d}}{\mathrm{d}t} \int_{\mathbb{R}} \left( \frac{1}{2}\varepsilon vE_x^2 + \frac{1}{2}v b_x^2 \right)  \,\mathrm{d}x   + \int_{\mathbb{R}} vE_x^2 \,\mathrm{d}x     \nonumber \\[2mm]
& \leq C \int_{\mathbb{R}} (\left| \phi_x \right| + \left| \psi_x \right|)(E_x^2 + b_x^2) \,\mathrm{d}x         + C \int_{\mathbb{R}} ( \left| \bar{v}_x \right|  +  \left| \bar{u}_x \right|  )( E_x^2 + b_x^2 ) \,\mathrm{d}x       \nonumber \\[2mm]
& \quad    +   C \int_{\mathbb{R}} (\left| \psi_x b \right|   + \left| \psi b_x \right|   + \left| \bar{u}_x b \right| ) \left| E_x \right|  \,\mathrm{d}x             +  \int_{\mathbb{R}}   \left| v \bar{u} b_x E_x  \right|  \,\mathrm{d}x        \nonumber \\[2mm]
& \leq     C( \| \phi_x \|_{L^\infty}    + \| \psi_x \|_{L^\infty} ) ( \| \sqrt{v} E_x \|^2   +  \| b_x \|^2 )      + C (\delta  + \varepsilon_0 ) \| \left( \sqrt{v}  E_x, \psi_x, b_x \right)  \|^2          \nonumber \\[2mm]
& \quad        + C \delta (1+t)^{-\frac{3}{2}}      + \frac{1}{2} \int_{\mathbb{R}} v E_x^2 \,\mathrm{d}x         + \frac{1}{2} \int_{\mathbb{R}} v \bar{u}^2 b_x^2 \,\mathrm{d}x       \nonumber \\[2mm]
& \leq     C (\delta  + \varepsilon_0 ) \| \left( \sqrt{v}  E_x, \psi_x, b_x \right)  \|^2         + C \delta (1+t)^{-\frac{3}{2}}       + \frac{1}{2} \int_{\mathbb{R}} v E_x^2 \,\mathrm{d}x      \nonumber \\[2mm]
& \quad      +   \frac{25}{32} u_-^2 \max \{ v_\pm\} \int_{\mathbb{R}} b_x^2 \,\mathrm{d}x  .    \label{Exbx4}
\end{align}
Then by choosing $ \varepsilon_0 $ and $ \delta $ suitably small, we can reach from \eqref{Exbx4}
\begin{align} 
 &\qquad \frac{\mathrm{d}}{\mathrm{d}t} \int_{\mathbb{R}} \left( \frac{1}{2}\varepsilon vE_x^2 + \frac{1}{2}v b_x^2 \right)  \,\mathrm{d}x       + \frac{1}{4}\int_{\mathbb{R}} vE_x^2 \,\mathrm{d}x       \nonumber \\[2mm]
 & \leq    C \delta (1+t)^{-\frac{3}{2}}        + C (\delta  + \varepsilon_0 )\| \left(  \psi_x, b_x \right)  \|^2        + u_-^2 \max \{ v_\pm\} \int_{\mathbb{R}} b_x^2 \,\mathrm{d}x .      \label{lemshi1}
\end{align}

Now we only need to estimate the last term on the right-hand side of $\eqref{lemshi1}$. Multiplying $\eqref{raodong}_{4}$ by $-vb_x$, integrating the resulting equation with respect to $x$ over $\mathbb{R}$ and applying $\eqref{raodong}_{5}$, we can easily deduce
\begin{small}
\begin{align}
& \qquad -\frac{\mathrm{d}}{\mathrm{d}t}\int_{\mathbb{R}} \varepsilon v Eb_x \,\mathrm{d}x + \int_{\mathbb{R}} b_x^2 \,\mathrm{d}x      \nonumber \\[2mm]
& = -\int_{\mathbb{R}} \varepsilon u_x E b_x \,\mathrm{d}x    - \int_{\mathbb{R}} \varepsilon v E b_{tx} \,\mathrm{d}x -\int_{\mathbb{R}} \varepsilon u E_x b_x \,\mathrm{d}x       + \int_{\mathbb{R}} (E + \psi b + \bar{u} b )v b_x \,\mathrm{d}x       \nonumber \\[2mm]
& = -\int_{\mathbb{R}} \varepsilon \psi_x E b_x \,\mathrm{d}x     - \int_{\mathbb{R}} \varepsilon \bar{u}_x E b_x \,\mathrm{d}x     +  \left(\int_{\mathbb{R}} \varepsilon \frac{u}{v} E \phi_x b_x \,\mathrm{d}x             + \int_{\mathbb{R}} \varepsilon \frac{u}{v} E \bar{v}_x b_x \,\mathrm{d}x  \right.        \nonumber \\[2mm]
& \quad    \left.   + \int_{\mathbb{R}} \varepsilon \frac{1}{v} \phi_x E E_x  \,\mathrm{d}x       +  \int_{\mathbb{R}}  \varepsilon \frac{1}{v} \bar{v}_x E E_x \,\mathrm{d}x     + \int_{\mathbb{R}} \varepsilon E_x^2 \,\mathrm{d}x      \right)      + \int_{\mathbb{R}} (E + \psi b + \bar{u} b )v b_x \,\mathrm{d}x     \nonumber \\[2mm]
& \leq C \varepsilon \varepsilon_0 \int_{\mathbb{R}} \left( \left| \psi_x b_x  \right|   + \left| \phi_x b_x \right|   + \left| \phi_x E_x \right|  \right)  \,\mathrm{d}x            + C \varepsilon \delta \int_{\mathbb{R}} \left| E \omega  \right| \left( \left| b_x \right|  + \left| E_x \right|  \right)   \,\mathrm{d}x            \nonumber \\[2mm]
& \quad      + \varepsilon \int_{\mathbb{R}} E_x^2  \,\mathrm{d}x            + C \int_{\mathbb{R}} \left| (E + \psi b + \bar{u} b )b_x \right|  \,\mathrm{d}x     \nonumber \\[2mm]
& \leq C \varepsilon (\delta  +  \varepsilon_0) \| (\phi_x, \psi_x, b_x, E_x) \|^2     + C \varepsilon \delta \| E \omega \|^2         + \varepsilon \| E_x \|^2       + \frac{1}{2} \| b_x \|^2  \nonumber \\[2mm]
& \quad      + C \| E + \psi b + \bar{u} b  \|^2 ,    \label{gujiExbx6}
\end{align}
\end{small}
i.e.,
\begin{align} 
 -\frac{\mathrm{d}}{\mathrm{d}t}\int_{\mathbb{R}} \varepsilon v Eb_x \,\mathrm{d}x     + \frac{1}{2} \| b_x \|^2
  & \leq C \varepsilon (\delta  +  \varepsilon_0) \| (\phi_x, \psi_x, b_x, E_x) \|^2     + C \varepsilon \delta \| E \omega \|^2       \nonumber \\[2mm]
  & \quad       + C \| E + \psi b + \bar{u} b  \|^2      + \varepsilon \| E_x \|^2 .    \label{lemshi2}
\end{align}
Multiplying \eqref{lemshi2} by $ 4u_-^2 \max\{v_\pm\} $ and summing it to \eqref{lemshi1}, together with $ v > \frac{1}{2} \min \left\{ v_\pm \right\} $ gives
\begin{align} 
 &\quad \frac{\mathrm{d}}{\mathrm{d}t} \int_{\mathbb{R}} \left( \frac{1}{2}\varepsilon vE_x^2  + \frac{1}{2}v b_x^2   -4\varepsilon u_-^2 \max\{v_\pm\}   v Eb_x   \right)  \,\mathrm{d}x        \nonumber \\[2mm]
 &\quad        + \left( \frac{1}{8} \min\{v_\pm\}  -4 \varepsilon  u_-^2 \max\{v_\pm\} \right)  \| E_x \|^2         +  \left( 2u_-^2 - u_-^2 \right)  \max\{v_\pm\}  \| b_x \|^2        \nonumber \\[2mm]
 & \leq      C \delta (1+t)^{-\frac{3}{2}}        + C (\delta  + \varepsilon_0 )\| \left( \psi_x, b_x \right)  \|^2          + C \varepsilon u_-^2 (\delta + \varepsilon_0 )  \| (\phi_x, \psi_x, b_x, E_x) \|^2      \nonumber \\[2mm]
 &\quad      + C \varepsilon  u_-^2  \delta  \| E \omega \|^2     + C  \| E + \psi b + \bar{u} b  \|^2 .     \label{lemshizhongjian}
\end{align}
By taking $ \frac{1}{8} \min\{v_\pm\}  - 4 \varepsilon  u_-^2 \max\{v_\pm\} >  \frac{1}{16} \min\{v_\pm\} $ in \eqref{lemshizhongjian},
i.e., $ \varepsilon u_-^2 < \frac{\min\{v_\pm\}}{64 \max\{v_\pm\}} $, we can obtain
\begin{align}  
 & \frac{\mathrm{d}}{\mathrm{d}t} \int_{\mathbb{R}} \left( \frac{1}{2}\varepsilon vE_x^2  + \frac{1}{2}v b_x^2   -4\varepsilon u_-^2 \max\{v_\pm\}   v Eb_x   \right)  \,\mathrm{d}x            +  \frac{1}{16} \min\{v_\pm\}  \| E_x \|^2               \nonumber \\[2mm]
 &  +  u_-^2 \max\{v_\pm\}  \| b_x \|^2    \leq     C \delta (1+t)^{-\frac{3}{2}}        + C (\delta + \varepsilon_0 ) \| (\phi_x, \psi_x, b_x, E_x) \|^2          \nonumber \\[2mm]
 &  \qquad\qquad\qquad\qquad\qquad       + C  \delta  \| E \omega \|^2           + C   \| E + \psi b + \bar{u} b  \|^2  .      \label{lemshi3}
\end{align}
Integrating \eqref{lemshi3} with respect to $t$, then choosing $ \varepsilon_0 $ and $ \delta $ suitably small leads to
\begin{small}
\begin{align}\label{lemshi4}
 & \quad  \int_{\mathbb{R}} \left( \frac{1}{2}\varepsilon vE_x^2  + \frac{1}{2}v b_x^2   \right)  \,\mathrm{d}x                      +  \frac{1}{32} \min\{v_\pm\}  \int_0^t \| E_x \|^2  \,\mathrm{d}\tau           +  \frac{1}{2} u_-^2 \max\{v_\pm\}  \int_0^t \| b_x \|^2 \,\mathrm{d}\tau        \nonumber  \\[2mm]
 & \leq    C \| (E_0, E_{0x}, b_{0x}) \|^2       + C \delta      + C (\delta  + \varepsilon_0 )  \int_0^t \| (\phi_x, \psi_x) \|^2 \,\mathrm{d}\tau        + C  \delta \int_0^t  \| E \omega \|^2 \,\mathrm{d}\tau    \nonumber \\[2mm]
 &  \quad         + C   \int_0^t \| E + \psi b + \bar{u} b  \|^2   \,\mathrm{d}\tau      + 4\varepsilon u_-^2 \max\{v_\pm\}  \int_{\mathbb{R}}  \left| v Eb_x \right| \,\mathrm{d}x.
\end{align}
\end{small}
Then by using the Cauchy inequality to the last term of \eqref{lemshi4}, we can get
\begin{small}
\begin{align}
 & 4\varepsilon u_-^2 \max\{v_\pm\}  \int_{\mathbb{R}}  \left| v Eb_x \right|   \,\mathrm{d}x
  \leq  C \int_{\mathbb{R}} v \left| \varepsilon u_-^2 E \right| \left| b_x \right|  \,\mathrm{d}x
   \leq  \int_{\mathbb{R}} v \left( \frac{1}{4} b_x^2  + C \varepsilon ^2 u_-^4 E^2 \right)  \,\mathrm{d}x     \nonumber \\[2mm]
 & \qquad\qquad\qquad\quad \;\;   \leq  \frac{1}{4} \int_{\mathbb{R}} vb_x^2  \,\mathrm{d}x     + C \varepsilon u_-^4 \int_{\mathbb{R}} \varepsilon E^2 \,\mathrm{d}x
   \leq  \frac{1}{4} \int_{\mathbb{R}} vb_x^2  \,\mathrm{d}x    + C u_-^2 \int_{\mathbb{R}} \varepsilon E^2 \,\mathrm{d}x,      \label{danjifen2}
\end{align}
\end{small}
where in the last inequality we have recalled that $ \varepsilon u_-^2 < \frac{\min\{v_\pm\}}{64 \max\{v_\pm\}} $.

Plugging \eqref{danjifen2} into \eqref{lemshi4}, then employing \eqref{daiquanE}, \eqref{jibennenglianggai} and \eqref{daiquangujigai}, together with taking $ \varepsilon_0 $ and $ \delta $ small enough, we can conclude \eqref{exbx}.

From the above discussion about $ \varepsilon u_-^2 $,
we can determine that
\begin{equation}\label{vepu-2}
  \varepsilon u_-^2   < \min \left\{ \frac{1}{4}, \frac{\min\{v_\pm\}}{64 \max\{v_\pm\}}  \right\}    =   \frac{\min\{v_\pm\}}{64 \max\{v_\pm\}}  .
\end{equation}
This completes the proof of Lemma \ref{exbxyijiedao}.
\end{proof}

In the following lemma, we control the energy  $\|\phi_x\|^2$.

\begin{lemma} \label{phiyijiedao}
Suppose that all the conditions in Theorem \ref{thm1} hold. For all $0 < t < T $, there exists a constant $\bar{C}$ depending only on $v_\pm$ and $|u_-|$ such that if $ 0< \varepsilon <\bar{C} $, then
\begin{equation}\label{phix}
 \|\phi_x\|^2 +\int_0^t \|\phi_{x}\|^2 {\rm{d}}\tau
  \leq    C \|(\psi_0, \zeta_0)\|^2      + C \|(\phi_0, E_0, b_0)\|_{H^1}^2     +  C\delta^\frac{2}{3} .
\end{equation}

\end{lemma}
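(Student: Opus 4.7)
The plan is to multiply the momentum equation $\eqref{raodong}_2$ by $-\phi_x$ and integrate in $x$, using the continuity relation $\phi_t=\psi_x$ (hence $\phi_{xt}=\psi_{xx}$) to convert the viscous term into a favourable time derivative of $\|\phi_x\|^2$. This is the standard Huang–Li–Matsumura trick, which I expect to adapt here after handling the extra Maxwell contribution $-v(E+\psi b+\bar u b)b$.

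First I would compute each piece. The time-derivative term yields
\begin{equation*}
-\int_{\mathbb R}\psi_t\phi_x\,\mathrm dx = -\frac{\mathrm d}{\mathrm dt}\int_{\mathbb R}\psi\phi_x\,\mathrm dx - \int_{\mathbb R}\psi_x^{2}\,\mathrm dx,
\end{equation*}
after an integration by parts in $t$ and the substitution $\phi_{xt}=\psi_{xx}$. The pressure gradient generates the desired dissipation, since
\begin{equation*}
-\int_{\mathbb R}\!\left(\frac{R\zeta-p_+\phi}{v}\right)_x\!\phi_x\,\mathrm dx = \int_{\mathbb R}\frac{p_+}{v}\phi_x^{2}\,\mathrm dx - \int_{\mathbb R}\frac{R\zeta_x\phi_x}{v}\,\mathrm dx + \int_{\mathbb R}\frac{(R\zeta-p_+\phi)\bar v_x\phi_x}{v^{2}}\,\mathrm dx + \int_{\mathbb R}\frac{(R\zeta-p_+\phi)\phi_x^{2}}{v^{2}}\,\mathrm dx.
\end{equation*}
For the viscosity, an integration by parts combined with $\psi_x=\phi_t$ gives
\begin{equation*}
-\int_{\mathbb R}\mu\!\left(\frac{\psi_x}{v}\right)_x\!\phi_x\,\mathrm dx = -\frac{\mu}{2}\frac{\mathrm d}{\mathrm dt}\int_{\mathbb R}\frac{\phi_x^{2}}{v}\,\mathrm dx - \frac{\mu}{2}\int_{\mathbb R}\frac{\phi_x^{2}u_x}{v^{2}}\,\mathrm dx + \mu\int_{\mathbb R}\frac{\psi_x\phi_x v_x}{v^{2}}\,\mathrm dx,
\end{equation*}
so that the $\phi_x^{2}$–time derivative enters the left-hand side with the \emph{correct} (positive) sign after rearrangement. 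Collecting everything produces an identity of the form
\begin{equation*}
\frac{\mathrm d}{\mathrm dt}\int_{\mathbb R}\!\left(\frac{\mu}{2v}\phi_x^{2}-\psi\phi_x\right)\mathrm dx + \int_{\mathbb R}\frac{p_+}{v}\phi_x^{2}\,\mathrm dx = \int_{\mathbb R}\psi_x^{2}\,\mathrm dx + \sum_i \mathcal R_i,
\end{equation*}
where the $\mathcal R_i$ gather: (i) the lower-order pressure remainders, (ii) the commutators $\mu\phi_x^{2}u_x/v^{2}$, $\mu\psi_x\phi_x v_x/v^{2}$, (iii) the wave source $[\bar u_t-\mu(\bar u_x/\bar v)_x]\phi_x$ and $\mu(\phi\bar u_x/(v\bar v))_x\phi_x$, and (iv) the Maxwell contribution $\int v(E+\psi b+\bar u b)b\,\phi_x\,\mathrm dx$.

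The second step is to integrate in $t$ and estimate each remainder. Using $v\sim 1$ from Lemma \ref{changyongshangxiajie}, the quantity $\frac{\mu}{2v}\phi_x^{2}-\psi\phi_x$ controls $\tfrac14\mu\|\phi_x\|^{2}$ up to $C\|\psi\|^{2}$ by Young's inequality, and $\int p_+\phi_x^{2}/v\,\mathrm dx \gtrsim \|\phi_x\|^{2}$. The terms $\int \psi_x^{2}$ and the lower-order pressure remainders (and the wave source integrals) are absorbed into previously obtained bounds from Lemmas \ref{dijieguji}–\ref{exbxyijiedao}, together with Lemma \ref{jianduanboshuaijian} for the $O(\delta)(1+t)^{-3/2}$ decay of the wave-profile contributions. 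The cubic commutators are controlled through the a priori bound \eqref{xianyanjiashe} and \eqref{raodongwuqiongguji}; for instance $\mu\int \phi_x^{2}\psi_x/v^{2}\le C\|\psi_x\|_{L^{\infty}}\|\phi_x\|^{2}\le C\varepsilon_0\|\phi_x\|^{2}$, which is absorbed on the left for $\varepsilon_0$ small.

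The main obstacle, and the step that uses the structural observation of this paper, is the Maxwell source. I would split
\begin{equation*}
\int_{\mathbb R} v(E+\psi b+\bar u b)b\,\phi_x\,\mathrm dx \le \eta \|\phi_x\|^{2} + C_\eta\|b\|_{L^{\infty}}^{2}\|E+\psi b+\bar u b\|^{2},
\end{equation*}
so that the good term is absorbed on the left while the remainder is integrable in time thanks to \eqref{jibennenglianggai}; similarly the $\mu(\phi\bar u_x/(v\bar v))_x\phi_x$ term is handled by expanding, putting $\phi\bar u_{xx}$ in $L^{1}$ via $\|\bar u_{xx}\|_{L^{1}}\le C\delta(1+t)^{-1}$, and using Young. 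After choosing $\eta$, $\varepsilon_0$ and $\delta$ sufficiently small and invoking the bounds \eqref{jibennenglianggai}, \eqref{daiquangujigai} and \eqref{exbx}, the integrated identity collapses to \eqref{phix}. I do not expect any fundamentally new difficulty beyond tracking that the weight $v$ in $\tfrac{\mu}{2v}\phi_x^{2}$ is uniformly positive and bounded (Lemma \ref{changyongshangxiajie}), which is indeed guaranteed under the a priori assumption.
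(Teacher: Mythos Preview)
Your proposal is correct and follows essentially the same route as the paper. The only cosmetic difference is that the paper first rewrites the momentum equation using $(u_x/v)_x=(\phi_x/v)_t+(\bar v_x/v)_t$ and then multiplies by $\phi_x/v$, producing the energy $\tfrac{\mu}{2}\phi_x^2/v^2-\psi\phi_x/v$ instead of your $\tfrac{\mu}{2}\phi_x^2/v-\psi\phi_x$; the remainder terms and their estimates are otherwise identical, and the Maxwell contribution is handled exactly as you describe via Young's inequality and the already-established bound on $\int_0^t\|E+\psi b+\bar u b\|^2\,\mathrm d\tau$.
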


\begin{proof}[Proof]
Due to
\begin{equation*}
 \left( \frac{u_x}{v} \right)_x =  \left( \frac{v_t}{v} \right)_x =  \left( \ln v \right)_{tx} =  \left( \frac{v_x}{v} \right)_t =  \left( \frac{\phi_x}{v} \right)_t +  \left( \frac{\bar{v}_x}{v} \right)_t
\end{equation*}
and
\begin{equation*}
\left( p - p_+ \right)_x   =  \left( \frac{R\zeta - {p_+}\phi}{v} \right)_x = -\frac{{p_+}\phi_x}{v} + \frac{R\zeta_x}{v} - (R\zeta - {p_+}\phi) \frac{\phi_x + \bar{v}_x}{v^2} ,
\end{equation*}
we can rewrite $\eqref{lagrange}_2$ as
\begin{align}  
\mu \left( \frac{\phi_x}{v} \right)_t        +  \left( p_+ + \frac{R \zeta - p_+ \phi}{v} \right) \frac{\phi_x}{v}
& =  \psi_t     + \frac{R\zeta_x}{v}    -\frac{R\zeta - p_+ \phi}{v^2}\bar{v}_x            \nonumber \\[2mm]
& \quad   +  \left[ \bar{u}_t  -\mu \left( \frac{\bar{v}_x}{v} \right)_t \right]    + v(E+\psi b+\bar{u}b)b.      \label{gujiphix0}
\end{align}
Multiplying \eqref{gujiphix0} by $ \frac{\phi_x}{v} $, then integrating the resulting equality with respect to $x$ over $\mathbb{R}$, noticing that
\begin{equation*}
  \left( \frac{\phi_x}{v} \right)_t  = \frac{\psi_{xx}}{v}  - \frac{\phi_x \psi_x + \phi_x \bar{u}_x}{v^2}
\end{equation*}
due to $ \phi_t = \psi_x $, we have
\begin{align} 
   & \qquad \frac{\mathrm{d}}{\mathrm{d}t} \int_{\mathbb{R}} \left( \frac{1}{2} \mu \frac{\phi_x^2}{v^2}  - \frac{\psi \phi_x}{v} \right)  \,\mathrm{d}x      + \int_{\mathbb{R}} \left( p_+  + \frac{R \zeta - p_+ \phi}{v} \right) \frac{\phi_x^2}{v^2}  \,\mathrm{d}x      \nonumber \\[2mm]
   & = \int_{\mathbb{R}} \frac{\psi_x^2}{v} \,\mathrm{d}x     + \int_{\mathbb{R}} \frac{\bar{u}_x \psi \phi_x }{v^2} \,\mathrm{d}x       - \int_{\mathbb{R}} \frac{\bar{v}_x \psi \psi_x}{v^2} \,\mathrm{d}x       - \int_{\mathbb{R}} \frac{R \zeta - p_+ \phi}{v^3} \bar{v}_x \phi_x \,\mathrm{d}x     \nonumber \\[2mm]
   & \quad    + \int_{\mathbb{R}} \frac{R \zeta_x \phi_x}{v^2} \,\mathrm{d}x       + \int_{\mathbb{R}} \left[ \bar{u}_t - \mu \left( \frac{\bar{v}_x}{v} \right)_t  \right] \frac{\phi_x}{v} \,\mathrm{d}x        + \int_{\mathbb{R}} (E + \psi b + \bar{u} b )b \phi_x \,\mathrm{d}x.      \label{gujiphix1}
\end{align}
Using the Cauchy inequality and the estimates for viscous contact wave in Lemma \ref{jianduanbol2mo} to \eqref{gujiphix1} yields that
\begin{align} 
   & \qquad \frac{\mathrm{d}}{\mathrm{d}t} \int_{\mathbb{R}} \left( \frac{1}{2} \mu \frac{\phi_x^2}{v^2}  - \frac{\psi \phi_x}{v} \right)  \,\mathrm{d}x         + \int_{\mathbb{R}} p_+ \frac{\phi_x^2}{v^2}  \,\mathrm{d}x     \nonumber \\[2mm]
   & \leq C \delta (1+t)^{-\frac{3}{2}}      +  C(\eta + \varepsilon _0) \int_{\mathbb{R}} \frac{\phi_x^2}{v^2}  \,\mathrm{d}x          + C \delta \| (\phi, \psi, \zeta) \omega \|^2      \nonumber \\[2mm]
   & \quad      + C \| (\psi_x, \zeta_x, E + \psi b + \bar{u} b ) \|^2 .     \label{gujiphix2}
\end{align}
Lemma \ref{phiyijiedao} thus follows easily from \eqref{gujiphix2}, \eqref{jibennenglianggai}--\eqref{exbx}
by first choosing $ \eta $ suitably small then $ \varepsilon_0 $ small enough.
\end{proof}


\begin{lemma} \label{psixdanjifen}
Suppose that all the conditions in Theorem \ref{thm1} hold. For all $0 < t < T $, there exists a constant $\bar{C}$ depending only on $v_\pm$ and $|u_-|$ such that if $ 0< \varepsilon <\bar{C} $, then
\begin{equation}\label{psixzetax}
\| (\psi_x, \zeta_x) \|^2 + \int_0^t \| (\psi_{xx}, \zeta_{xx} )\|^2  \,\mathrm{d}\tau      \leq  C \|(\phi_0, \psi_0, \zeta_0, E_0, b_0)\|_{H^1}^2          + C \delta^\frac{2}{3} .
\end{equation}

\end{lemma}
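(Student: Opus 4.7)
The plan is to carry out a standard first-order energy estimate for $(\psi,\zeta)$ by multiplying $\eqref{raodong}_2$ by $-\psi_{xx}$ and $\eqref{raodong}_3$ by $-\zeta_{xx}$ (or a $\theta$-weighted variant), integrating over $\mathbb{R}$, and combining with Lemmas \ref{dijieguji}--\ref{phiyijiedao}. After integration by parts, the principal dissipation terms $\mu\int \psi_{xx}^2/v\,\mathrm{d}x$ and $\kappa\int\zeta_{xx}^2/(v\theta)\,\mathrm{d}x$ appear with good signs, together with time derivatives of $\tfrac12\|\psi_x\|^2$ and $\tfrac{R}{\gamma-1}\int \zeta_x^2/\theta\,\mathrm{d}x$ (up to lower-order transport corrections). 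The standard Navier--Stokes nonlinearities, the viscous-dissipation term $\mu(\psi_x+\bar u_x)^2/v$, the pressure couplings $(R\zeta-p_+\phi)/v\cdot \bar u_x$, and the viscous-contact-wave source terms $\bar u_t$, $\mu(\bar u_x/\bar v)_x$, $\kappa((\bar v\zeta_x-\phi\bar\theta_x)/(v\bar v))_x$ etc.\ are handled by Young's inequality, the Sobolev embedding \eqref{lwuqiong}, the a priori bound \eqref{raodongwuqiongguji}, and the decay rates of Lemmas \ref{jianduanboshuaijian}--\ref{jianduanbol2mo}, exactly as in \cite{huangfm2010}.

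The genuinely new terms, produced by the Lorentz force and Joule heating, are
\begin{equation*}
I_\psi := \int_{\mathbb{R}} v(E+\psi b+\bar u b)\,b\,\psi_{xx}\,\mathrm{d}x,\qquad
I_\zeta := -\int_{\mathbb{R}} v(E+\psi b+\bar u b)^2\,\zeta_{xx}\,\mathrm{d}x.
\end{equation*}
For $I_\psi$ I use Cauchy's inequality together with the $L^\infty$-smallness of $b$ coming from \eqref{raodongwuqiongguji}:
\begin{equation*}
|I_\psi|\leq \eta\|\psi_{xx}\|^2+C\|b\|_{L^\infty}^2\,\|E+\psi b+\bar u b\|^2
\leq \eta\|\psi_{xx}\|^2+C\varepsilon_0^2\,\|E+\psi b+\bar u b\|^2.
\end{equation*}
For $I_\zeta$ I similarly extract an $L^\infty$ factor of $E+\psi b+\bar u b$ via \eqref{lwuqiong} and the a priori smallness of $\|(E,b)\|_{H^1}$ and $\|\psi\|_{H^1}$, giving
\begin{equation*}
|I_\zeta|\leq \eta\|\zeta_{xx}\|^2+C\varepsilon_0^2\,\|E+\psi b+\bar u b\|^2 .
\end{equation*}
Both remainders are time-integrable thanks to \eqref{jibennenglianggai}.

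After choosing $\eta$ small enough to absorb the $\|\psi_{xx}\|^2$ and $\|\zeta_{xx}\|^2$ pieces on the left, integrating in $t$, and then invoking \eqref{jibennenglianggai}, \eqref{exbx}, and \eqref{phix} to control all the leftover $L^2_tL^2_x$ terms $\|(\psi_x,\zeta_x,\phi_x,E_x,b_x,E+\psi b+\bar u b)\|^2$, the desired bound \eqref{psixzetax} follows, with the $\delta^{2/3}$ arising as in the earlier lemmas through the weighted heat-kernel estimates of Lemma \ref{quanguji}.

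The main technical obstacle I anticipate is the control of $I_\zeta$: because $(E+\psi b+\bar u b)^2$ is already quadratic in the compound good term and multiplying by $\zeta_{xx}$ forces one to borrow an $L^\infty$ bound, one has to verify carefully that the factor $\varepsilon_0$ produced by Sobolev embedding is small enough to be absorbed without disturbing the smallness hierarchy between $\delta$, $\varepsilon$, and $\varepsilon_0$ already fixed in Lemmas \ref{dijieguji}--\ref{exbxyijiedao}; in particular, the smallness of $\varepsilon$ imposed in \eqref{vepu-2} should not need to be tightened at this step.
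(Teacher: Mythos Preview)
Your proposal is correct and follows essentially the same route as the paper: multiply $\eqref{raodong}_2$ by $-\psi_{xx}$ and $\eqref{raodong}_3$ by $-\zeta_{xx}$ (the paper uses the unweighted version, not the $\theta$-weighted one), then bound the electromagnetic remainders $I_\psi$ and $I_\zeta$ exactly as you describe via Cauchy's inequality and the $L^\infty$-smallness from \eqref{raodongwuqiongguji}, before closing with \eqref{jibennenglianggai}, \eqref{exbx}, and \eqref{phix}. Your concern about $I_\zeta$ is well-placed but resolves as you anticipate---the extra $\varepsilon_0$ factor from Sobolev embedding is harmless and no additional restriction on $\varepsilon$ is needed at this step.
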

\begin{proof}[Proof]
Multiplying $ \eqref{raodong}_2 $ by $-\psi_{xx}$ and integrating the resulting equality with respect to $x$, we have
\begin{align}  
   & \qquad  \frac{\mathrm{d}}{\mathrm{d}t}  \int_{\mathbb{R}} \frac{1}{2} \psi_x^2 \,\mathrm{d}x     + \mu \int_{\mathbb{R}} \frac{\psi_{xx}^2}{v} \,\mathrm{d}x         \nonumber \\[2mm]
   &  =  \mu \int_{\mathbb{R}} \frac{\phi_x \psi_x \psi_{xx}}{v^2} \,\mathrm{d}x     + \mu \int_{\mathbb{R}} \frac{\bar{v}_x \psi_x \psi_{xx}}{v^2} \,\mathrm{d}x           + \int_{\mathbb{R}} \left( \frac{R \zeta - p_+ \phi}{v} \right)_x \psi_{xx}  \,\mathrm{d}x         \nonumber \\[2mm]
   &  \quad      + \mu  \int_{\mathbb{R}} \left( \frac{\phi \bar{u}_x}{v \bar{v}} \right)_x \psi_{xx}  \,\mathrm{d}x        + \int_{\mathbb{R}} \left[ \bar{u}_t  - \mu \left( \frac{\bar{u}_x}{\bar{v}} \right)_x  \right] \psi_{xx}  \,\mathrm{d}x        \nonumber \\[2mm]
   & \quad     + \int_{\mathbb{R}} v (E + \psi b + \bar{u} b ) b \psi_{xx} \,\mathrm{d}x   =: \sum_{i=1}^6 L_i.       \label{gujipsix1}
\end{align}
Using $ \| \phi_x \|_{L^{\infty}}\leq \sqrt{2}\varepsilon_0 $ in \eqref{raodongwuqiongguji} and H$\ddot{\mathrm{o}}$lder inequality yields that
\begin{equation}\label{l1}
  \left| L_1 \right|  \leq  C \| \phi_x \|_{L^{\infty}} \| \psi_x \| \| \psi_{xx} \| \leq C \varepsilon _0 \left( \| \psi_x \|^2  +  \| \psi_{xx} \|^2  \right).
\end{equation}
Moreover, utilizing the Cauchy inequality, the estimate $\eqref{shuaijianlv1}$ and Lemma \ref{jianduanbol2mo} leads to
\begin{align} 
  \sum_{i=2}^6 \left| L_i \right|
  & \leq C (\eta + \delta) \| \psi_{xx} \|^2     + C \delta (1+t)^{-\frac{5}{2}}     + C \delta \| (\phi, \zeta)\omega \|^2          \nonumber \\[2mm]
  &\quad  + C \| (\phi_x, \psi_x, \zeta_x, E + \psi b + \bar{u} b ) \|^2.      \label{liqiyu}
\end{align}
By taking $ \eqref{l1}, \eqref{liqiyu}$ into $\eqref{gujipsix1}$,
and integrating the resulting equality over $ (0,t) $,
then using \eqref{jibennenglianggai}--\eqref{exbx} and \eqref{phix}, as well as choosing first $ \eta $ suitably small next $ \varepsilon_0  $, $ \delta $ small enough, we obtain
\begin{equation}\label{gujipsix}
\| \psi_x \|^2 + \int_0^t \| \psi_{xx} \|^2  \,\mathrm{d}\tau \leq   C \|(\psi_0, \phi_0, E_0, b_0)\|_{H^1}^2      + C \| \zeta_0 \|^2         +  C\delta^\frac{2}{3}   .
\end{equation}

On the other hand, multiplying the $\eqref{raodong}_3$ by $-\zeta_{xx}$ and integrating the resulting equality over $ \mathbb{R} $, we have
\begin{small}
\begin{align}
& \qquad \frac{\mathrm{d}}{\mathrm{d}t}\int_{\mathbb{R}} \frac{R}{2(\gamma-1)}\zeta^2_x \,\mathrm{d}x +  \kappa \int_{\mathbb{R}}  \frac{\zeta_{xx}^2}{v} \,\mathrm{d}x     \nonumber \\[2mm]
& =  \kappa \int_{\mathbb{R}} \frac{\zeta_x \zeta_{xx} (\phi_x + \bar{v}_x )}{v^2} \,\mathrm{d}x       + \int_{\mathbb{R}} p \psi_x \zeta_{xx} \,\mathrm{d}x      + \kappa \int_{\mathbb{R}} \left( \frac{\phi \bar{\theta}_x }{v \bar{v}} \right)_x \zeta_{xx}  \,\mathrm{d}x        \nonumber \\[2mm]
& \quad     + \int_{\mathbb{R}} \frac{R \zeta - p_+ \phi}{v} \bar{u}_x \zeta_{xx} \,\mathrm{d}x       - \mu \int_{\mathbb{R}} \frac{\left( \psi_x + \bar{u}_x \right)^2  }{v} \zeta_{xx} \,\mathrm{d}x       - \int_{\mathbb{R}} v(E + \psi b + \bar{u} b )^2 \zeta_{xx}  \,\mathrm{d}x.      \label{gujizetax0}
\end{align}
\end{small}
Similarly, we can get from \eqref{gujizetax0}
\begin{equation}\label{gujizetax}
\| \zeta_x \|^2 + \int_0^t \| \zeta_{xx} \|^2  \,\mathrm{d}\tau  \leq    C \|(\phi_0, \psi_0, \zeta_0, E_0, b_0)\|_{H^1}^2          + C \delta^\frac{2}{3} .
\end{equation}
Summing \eqref{gujizetax} to \eqref{gujipsix} yields \eqref{psixzetax}. The proof of Lemma \ref{psixdanjifen} is completed.
\end{proof}


In order to close the {\it a priori} assumption $ \sup_{0\leq t\leq T} \| \phi_{xx}(t) \| \leq \varepsilon _0  $, we still need to control the high-order energy $\|\phi_{xx}\|^2$.
\begin{lemma}\label{phixxguji}
Suppose that all the conditions in Theorem \ref{thm1} hold. For all $0 < t < T $, there exists a constant $\bar{C}$ depending only on $v_\pm$ and $|u_-|$ such that if $ 0< \varepsilon <\bar{C} $, then
\begin{equation}\label{phixx}
\| \phi_{xx} \|^2 +  \int_0^t \| \phi_{xx} \|^2  \,\mathrm{d}\tau    \leq   C \delta^\frac{2}{3}     + C\|\phi_{0}\|_{H^2}^2        + C \| (\psi_0, \zeta_0, E_0, b_0) \|^2_{H^1}   .
\end{equation}

\end{lemma}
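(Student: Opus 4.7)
The plan is to derive an evolution estimate for $(\phi_x/v)_x$, whose $L^2$-norm is equivalent to $\|\phi_{xx}\|$ modulo already-controlled lower-order quantities. Starting from equation \eqref{gujiphix0}, which provides a damped evolution for $\phi_x/v$, I would differentiate it once more in $x$, multiply the result by $(\phi_x/v)_x$, and integrate over $\mathbb{R}$. The time-derivative part yields $\frac{\mathrm{d}}{\mathrm{d}t}\int \frac{\mu}{2}[(\phi_x/v)_x]^2\,\mathrm{d}x$, while the principal damping $p_+\int[(\phi_x/v)_x]^2\,\mathrm{d}x$ survives on the left after isolating the leading constant from the coefficient $p_+ + (R\zeta - p_+\phi)/v$, exactly as in the proof of Lemma \ref{phiyijiedao}.

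The critical step is handling the $\psi_{tx}$ forcing that appears on the right after the $x$-differentiation. I would integrate by parts in time,
\begin{equation*}
\int_0^t\!\!\int_{\mathbb{R}} \psi_{tx}\,(\phi_x/v)_x \,\mathrm{d}x\,\mathrm{d}\tau = \Big[\int_{\mathbb{R}}\psi_x(\phi_x/v)_x\,\mathrm{d}x\Big]_0^t - \int_0^t\!\!\int_{\mathbb{R}}\psi_x\,\partial_t\big[(\phi_x/v)_x\big]\,\mathrm{d}x\,\mathrm{d}\tau,
\end{equation*}
then use $\phi_t=\psi_x$ together with $v_t=\psi_x+\bar{u}_x$ to expand $\partial_t[(\phi_x/v)_x]$ as $(\psi_{xx}/v)_x$ plus nonlinear lower-order pieces. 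A further integration by parts in $x$ converts the resulting $-\int\psi_x(\psi_{xx}/v)_x\,\mathrm{d}x$ into the positive but already-controlled quantity $\int\psi_{xx}^2/v\,\mathrm{d}x$, bounded after time-integration by $\int_0^t\|\psi_{xx}\|^2\,\mathrm{d}\tau$ from Lemma \ref{psixdanjifen}. The boundary term at time $t$ is absorbed via Cauchy's inequality into a small multiple of the principal energy plus $C\|\psi_x\|^2$.

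The remaining right-hand terms split into three classes: (i) profile corrections involving $\bar{v}_x$, $\bar{u}_t$, $\bar{u}_{xx}$, controlled by the decay rates of Lemma \ref{jianduanbol2mo} and the weighted integral \eqref{daiquangujigai}, producing contributions of the form $\delta(1+t)^{-k}$ and $\delta\int\phi_x^2\omega^2$; (ii) the pressure-gradient piece $R\zeta_{xx}/v$, handled via Cauchy and $\int_0^t\|\zeta_{xx}\|^2\,\mathrm{d}\tau$ from Lemma \ref{psixdanjifen}; and (iii) the differentiated electromagnetic forcing $[v(E+\psi b+\bar{u}b)b]_x$, whose expansion yields products like $E_x b$, $E b_x$, $\psi_x b^2$ and $\psi b b_x$, each estimable via the Sobolev inequality \eqref{lwuqiong}, the a priori bound \eqref{raodongwuqiongguji}, and the good time-space integrals $\int_0^t\|(E_x,b_x)\|^2\,\mathrm{d}\tau$ and $\int_0^t\|E+\psi b+\bar{u}b\|^2\,\mathrm{d}\tau$ furnished by Lemmas \ref{dijieguji} and \ref{exbxyijiedao}.

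The main obstacle is the bookkeeping of the mixed derivative $(\phi_x/v)_{xt}$: after expansion, one must verify that every residual term is either a time-boundary contribution, a quantity already controlled by previous lemmas, or a small multiple of order $(\varepsilon_0+\delta)^{1/2}$ of the principal energy $\|(\phi_x/v)_x\|^2$ that can be absorbed back into the LHS. Once this accounting is complete, time-integration combined with a sufficiently small choice of $\eta$, $\varepsilon_0$ and $\delta$ (and, if needed, the addition of a large multiple of the bound in Lemma \ref{psixdanjifen} to close the coupling between $\phi_{xx}$ and $\psi_{xx}$) produces precisely \eqref{phixx}.
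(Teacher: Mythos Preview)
Your proposal is correct and follows essentially the same strategy as the paper: differentiate the damped evolution \eqref{gujiphix0} for $\phi_x/v$ once in $x$, extract a principal damping of size $p_+$ on the second derivative, and dispose of the $\psi_{tx}$ forcing via a time-integration-by-parts that converts it into a boundary cross term plus $\int_0^t\|\psi_{xx}\|^2\,\mathrm{d}\tau$, already controlled by Lemma \ref{psixdanjifen}.

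The one cosmetic difference is your choice of multiplier. The paper multiplies the differentiated equation by $\phi_{xx}$ rather than by $(\phi_x/v)_x$, and packages the $\psi_{tx}$ term directly into the energy functional as $-\psi_x\phi_{xx}$ (see the expression under $\frac{\mathrm{d}}{\mathrm{d}t}$ in \eqref{gujiphixx0}); the identity $\psi_{tx}\phi_{xx}=(\psi_x\phi_{xx})_t-\psi_x\psi_{xxx}$ together with $\phi_{txx}=\psi_{xxx}$ produces the same $\int\psi_{xx}^2$ term (their $M_9$). Since $(\phi_x/v)_x=\phi_{xx}/v-\phi_x v_x/v^2$ and the difference is lower order under \eqref{raodongwuqiongguji}, the two multipliers yield equivalent estimates. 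Your treatment of the electromagnetic forcing $[v(E+\psi b+\bar{u}b)b]_x$ matches the paper's $M_{13}$ estimate, and your closing via Lemmas \ref{dijieguji}--\ref{psixdanjifen} is exactly what the paper does.
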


\begin{proof}[Proof]

Differentiating $\eqref{gujiphix0}$ with respect to $x$ and multiplying it by $ \phi_{xx} $, then integrating the resulting equality over $\mathbb{R}$, together with $\phi_t = \psi_x $, we can get
\begin{align}  
& \quad    \frac{\mathrm{d}}{\mathrm{d}t} \int_{\mathbb{R}} \left(  \frac{1}{2}\mu \frac{\phi_{xx}^2 }{v}  - \mu \frac{(\phi_x +\bar{v}_x) \phi_x \phi_{xx}  }{v^2 }    - \psi_x \phi_{xx}   \right)  \,\mathrm{d}x       + \int_{\mathbb{R}} p_+ \frac{\phi_{xx}^2}{v} \,\mathrm{d}x       \nonumber  \\[2mm]
& =        \frac{1}{2} \mu \int_{\mathbb{R}} \frac{\psi_x + \bar{u}_x}{v^2} \phi_{xx}^2 \,\mathrm{d}x         + 2 \mu \int_{\mathbb{R}} \frac{\phi_x \phi_{xx}  \psi_{xx}}{v^2}  \,\mathrm{d}x          + \mu \int_{\mathbb{R}} \frac{\bar{v}_x   \phi_{xx} \psi_{xx} }{v^2} \,\mathrm{d}x       \nonumber  \\[2mm]
& \quad     + \mu \int_{\mathbb{R}} \frac{\bar{v}_{xx} \phi_x \psi_{xx}  }{v^2} \,\mathrm{d}x       - 2 \mu \int_{\mathbb{R}} \left( \phi_x^3  +  2 \bar{v}_x \phi_x^2  +  \bar{v}_x^2 \phi_x \right) \frac{\psi_{xx}}{v^3}  \,\mathrm{d}x          \nonumber  \\[2mm]
& \quad     + \int_{\mathbb{R}} p_+ \frac{\phi_x^2 \phi_{xx}}{v^2} \,\mathrm{d}x          + \int_{\mathbb{R}} p_+ \frac{\bar{v}_x \phi_x \phi_{xx} }{v^2} \,\mathrm{d}x        - \int_{\mathbb{R}} \left( \frac{R \zeta - p_+ \phi}{v^2} \phi_x \right)_x  \phi_{xx}  \,\mathrm{d}x    \nonumber  \\[2mm]
& \quad     +\int_{\mathbb{R}} \psi_{xx}^2  \,\mathrm{d}x          + R \int_{\mathbb{R}}  \left( \frac{\zeta_x}{v} \right)_x \phi_{xx}  \,\mathrm{d}x             - \int_{\mathbb{R}} \left( \frac{R \zeta - p_+ \phi}{v^2} \bar{v}_x \right)_x  \phi_{xx}  \,\mathrm{d}x         \nonumber  \\[2mm]
& \quad     + \int_{\mathbb{R}} \left[ \bar{u}_t - \mu \left( \frac{\bar{v}_x}{v} \right)_t  \right]_x  \phi_{xx} \,\mathrm{d}x           + \int_{\mathbb{R}} \left[ v(E + \psi b + \bar{u} b )b  \right]_x  \phi_{xx} \,\mathrm{d}x       =:   \sum_{i=1}^{13}  M_i.    \label{gujiphixx0}
\end{align}
Employing $ \| \left( \phi_x, \psi_x \right) \|_{L^{\infty}} \leq \sqrt{2} \varepsilon_0 $ in \eqref{raodongwuqiongguji} and the Cauchy inequality leads to
\begin{equation}\label{m1-12}
    \sum_{i=1}^{12}  \left| M_i \right|  \leq  C (\eta + \varepsilon _0  +  \delta) \| \phi_{xx} \|^2     + C \delta (1+t)^{-\frac{3}{2}}       + C \| \left( \phi_x, \psi_x, \zeta_x, \psi_{xx}, \zeta_{xx} \right)  \|^2
\end{equation}
and
\begin{align} 
  \left| M_{13}  \right|
  & \leq  \int_{\mathbb{R}} \left| \left[ v(E + \psi b + \bar{u} b )b  \right]_x  \phi_{xx} \right|  \,\mathrm{d}x        \nonumber \\[2mm]
  & \leq  C \int_{\mathbb{R}} \left| (\phi_x + \bar{v}_x) b  \right|  \cdot  \left| \phi_{xx} \right|   \,\mathrm{d}x        + C \int_{\mathbb{R}}   \left| b_x \right| \cdot \left| \phi_{xx} \right|  \,\mathrm{d}x      \nonumber \\[2mm]
  & \quad    +  C \int_{\mathbb{R}} \left|  E_x + \psi_x b + \psi b_x + \bar{u}_x b + \bar{u} b_x   \right| \cdot  \left| \phi_{xx} \right|  \,\mathrm{d}x     \nonumber \\[2mm]
  & \leq  C \eta \| \phi_{xx} \|^2        +  C \| (\phi_x, \psi_x, b_x, E_x) \|^2        + C \delta \|  b \omega \|^2.     \label{m13}
\end{align}
By taking $ \eqref{m1-12}, \eqref{m13}$ into $\eqref{gujiphixx0}$, and choosing first $ \eta $ suitably small next $ \varepsilon_0  $, $ \delta $ small enough, integrating the resulting equality over $ (0,t) $, then using \eqref{jibennenglianggai}--\eqref{exbx}, \eqref{phix} and \eqref{psixzetax},  we obtain \eqref{phixx}.

The proof of Lemma \ref{phixxguji} is completed.
\end{proof}


To get higher order regularity of the velocity and the absolute temperature, we will estimate the energy $ \| \psi_{xx} \|^2  $ and $ \| \zeta_{xx} \|^2  $ respectively.

\begin{lemma}\label{psixxguji}
Suppose that all the conditions in Theorem \ref{thm1} hold. For all $0 < t < T $, there exists a constant $\bar{C}$ depending only on $v_\pm$ and $|u_-|$ such that if $ 0< \varepsilon <\bar{C} $, then
\begin{equation}\label{psixx}
  \| \psi_{xx} \|^2  +  \int_0^t \| \psi_{xxx} \|^2  \,\mathrm{d}\tau   \leq  C \delta^{\frac{2}{3}}   +  C \| (\phi_0, \psi_0) \|^2_{H^2}   +  C \| (\zeta_0, E_0, b_0) \|^2_{H^1} .
\end{equation}

\end{lemma}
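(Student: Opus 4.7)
The plan is to mimic the strategy of Lemma \ref{psixdanjifen} one derivative higher. Specifically, I would differentiate the momentum equation $\eqref{raodong}_2$ once with respect to $x$, multiply the resulting identity by $-\psi_{xxx}$, and integrate over $\mathbb{R}$. Integrating by parts once in $x$ on the temporal term produces $\tfrac{1}{2}\tfrac{\mathrm{d}}{\mathrm{d}t}\|\psi_{xx}\|^{2}$, while expanding
\begin{equation*}
-\mu\int_{\mathbb{R}}\!\Big(\tfrac{\psi_x}{v}\Big)_{xx}\psi_{xxx}\,\mathrm{d}x \;=\; -\mu\int_{\mathbb{R}}\frac{\psi_{xxx}^{2}}{v}\,\mathrm{d}x \;+\; (\text{commutator remainders})
\end{equation*}
yields the coercive dissipation $\mu\int\psi_{xxx}^{2}/v\,\mathrm{d}x$. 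The remainders are of the form $\phi_x\psi_{xx}\psi_{xxx}$, $\bar v_x\psi_x\psi_{xxx}$, etc., and are absorbed by Young's inequality using the a priori smallness of $\|\phi_x\|_{L^{\infty}}$ from \eqref{raodongwuqiongguji}.

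Next I would estimate each source term on the right-hand side. The pressure contribution $\int\bigl((R\zeta-p_+\phi)/v\bigr)_{xx}\psi_{xxx}\,\mathrm{d}x$ is dominated by $\eta\|\psi_{xxx}\|^{2}+C\|(\phi_{xx},\zeta_{xx})\|^{2}+\text{l.o.t.}$, where the key new ingredient $\|\phi_{xx}\|^{2}$ is already controlled in both $L^{\infty}_{t}L^{2}_{x}$ and $L^{2}_{t}L^{2}_{x}$ by Lemma \ref{phixxguji}, and $\int_{0}^{t}\|\zeta_{xx}\|^{2}\,\mathrm{d}\tau$ by Lemma \ref{psixdanjifen}. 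The terms involving $\bar u_t-\mu(\bar u_x/\bar v)_{x}$ and its $x$-derivative yield integrable $C\delta(1+t)^{-5/2}$-type weights, thanks to Lemma \ref{jianduanboshuaijian} and Lemma \ref{jianduanbol2mo}.

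The principal novelty relative to the pure Navier--Stokes case is the electromagnetic source $[v(E+\psi b+\bar u b)b]_x$ tested against $\psi_{xxx}$. Expanding the $x$-derivative produces pieces such as $v_x(E+\psi b+\bar u b)b\,\psi_{xxx}$, $vE_x b\,\psi_{xxx}$, $v\psi_x b^{2}\psi_{xxx}$, $v\psi b b_x\,\psi_{xxx}$, $v\bar u_x b^{2}\psi_{xxx}$, $v\bar u b b_x\,\psi_{xxx}$, and $v(E+\psi b+\bar u b)b_x\,\psi_{xxx}$. After Young's inequality, each reduces to $\eta\|\psi_{xxx}\|^{2}$ plus a combination of $\|E_x\|^{2}$, $\|b_x\|^{2}$, and $\|E+\psi b+\bar u b\|^{2}$, multiplied by either $\varepsilon_{0}^{2}$ (through the Sobolev bound $\|b\|_{L^{\infty}}\le\sqrt{2}\,\varepsilon_{0}$) or $\delta$ (through the weight bound $|\bar u_x|\le C\delta\omega^{2}$). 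This is the decisive point: without the smallness of $\|b\|_{L^{\infty}}$, the term $vE_x b\,\psi_{xxx}$ would not be absorbable, since $\int_{0}^{t}\|E_x\|^{2}\,\mathrm{d}\tau$ is only bounded by $C\delta^{2/3}+C\varepsilon_{0}\int_{0}^{t}\|\phi_x\|^{2}\,\mathrm{d}\tau+\cdots$ in Lemma \ref{exbxyijiedao}, and the coefficient must be small for the bound to close.

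Finally, choosing first $\eta$ small enough to absorb the $\eta\|\psi_{xxx}\|^{2}$ contributions into the left-hand side, then $\varepsilon_{0}$ and $\delta$ small, integrating in $t$, and plugging in the previously established bounds \eqref{jibennenglianggai}, \eqref{daiquangujigai}, \eqref{exbx}, \eqref{phix}, \eqref{psixzetax}, and \eqref{phixx}, should yield \eqref{psixx}. The main obstacle I anticipate is organizing the electromagnetic remainders so that every $\|E_x\|^{2}$ or $\|b_x\|^{2}$ factor picks up at least one power of $\varepsilon_{0}$ or $\delta$; the bookkeeping relies crucially on the ordering of the lemmas and on the use of the $L^{\infty}$ smallness of $b$ and $\psi$ inherited from the a priori assumption \eqref{xianyanjiashe}.
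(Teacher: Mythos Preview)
Your proposal is correct and follows essentially the same route as the paper: differentiate $\eqref{raodong}_2$ in $x$, test against $-\psi_{xxx}$, extract the dissipation $\mu\int\psi_{xxx}^2/v\,\mathrm{d}x$, and bound the six resulting source terms by Young's inequality together with the smallness of $\|(\phi_x,\psi_x)\|_{L^\infty}$ and the decay of the contact profile.

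The one place you overcomplicate matters is the electromagnetic remainder. You worry that every occurrence of $\|E_x\|^2$ or $\|b_x\|^2$ must carry a small prefactor $\varepsilon_0$ or $\delta$ in order for the estimate to close. The paper does not do this: it simply bounds the term $\int_{\mathbb{R}}[v(E+\psi b+\bar u b)b]_x\psi_{xxx}\,\mathrm{d}x$ by
\[
C\eta\|\psi_{xxx}\|^2 + C\|(\psi_x,E_x,b_x,E+\psi b+\bar u b)\|^2 + C\delta(1+t)^{-3/2},
\]
with an \emph{order-one} constant in front of $\|(E_x,b_x)\|^2$. There is no circularity, because by the time you reach this lemma, $\int_0^t\|\phi_x\|^2\,\mathrm{d}\tau$ has already been closed in Lemma~\ref{phiyijiedao}, so the bound \eqref{exbx} on $\int_0^t\|(E_x,b_x)\|^2\,\mathrm{d}\tau$ is a fixed quantity of size $C\delta^{2/3}+C\|(\phi_0,\psi_0,\zeta_0)\|^2+C\|(E_0,b_0)\|_{H^1}^2$. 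Your sharper bookkeeping is harmless but unnecessary.
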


\begin{proof}[Proof]
Differentiating $\eqref{raodong}_{2}$ with respect to $x$ and multiplying by $ -\psi_{xxx} $, then integrating the resulting equality with respect to $x$ over $\mathbb{R}$, we can get
\begin{small}
\begin{align}
&\qquad \frac{\mathrm{d}}{\mathrm{d}t} \int_{\mathbb{R}} \frac{1}{2} \psi_{xx}^2 \,\mathrm{d}x       + \mu \int_{\mathbb{R}} \frac{\psi_{xxx}^2}{v} \,\mathrm{d}x     \nonumber \\[2mm]
&= \mu \int_{\mathbb{R}}  \frac{\psi_{xx} v_x }{v^2} \psi_{xxx}  \,\mathrm{d}x         +  \mu \int_{\mathbb{R}} \left( \frac{\psi_x v_x}{v^2} \right)_x \psi_{xxx}  \,\mathrm{d}x           +  \int_{\mathbb{R}} \left( \frac{R \zeta - p_+ \phi}{v} \right)_{xx} \psi_{xxx}  \,\mathrm{d}x       \nonumber \\[2mm]
&\quad     +    \mu \int_{\mathbb{R}}  \left( \frac{\phi \bar{u}_x }{v \bar{v}} \right)_{xx} \psi_{xxx}  \,\mathrm{d}x       + \int_{\mathbb{R}} \left[ \bar{u}_t  -  \mu \left( \frac{\bar{u}_x}{\bar{v}} \right)_{x}  \right]_x \psi_{xxx}  \,\mathrm{d}x        \nonumber \\[2mm]
&\quad     +  \int_{\mathbb{R}} \left[ v(E+\psi b + \bar{u}b)b \right]_x \psi_{xxx}  \,\mathrm{d}x  =: \sum_{i=1}^{6} Y_i.       \label{psixx0}
\end{align}
\end{small}
By using $ \| \left( \phi_x, \psi_x \right) \|_{L^{\infty}} \leq \sqrt{2} \varepsilon_0 $ in \eqref{raodongwuqiongguji}, the Cauchy inequality and Lemma \ref{jianduanbol2mo}, we can get
\begin{align} 
   Y_1 + Y_2
   & \leq C \int_{\mathbb{R}}   \left| \psi_{xxx} \right| \cdot \left[ \left| \psi_{xx} \right| \left( \left| \phi_x \right|  +  \left| \bar{\theta}_x \right| \right)  +  \left| \psi_x \right| \left( \left| \phi_{xx} \right|  + \phi_x^2   +  \left| \bar{\theta}_{xx}  \right|     + \bar{\theta}_x^2     \right)  \right]  \,\mathrm{d}x     \nonumber \\[2mm]
   & \leq C \eta \| \psi_{xxx} \|^2    +   C \| (\psi_x, \phi_{xx}, \psi_{xx} \|^2,      \label{y1y2}
\end{align}
\begin{align} 
   Y_3
   & \leq C \int_{\mathbb{R}}  \left| \psi_{xxx} \right| \cdot \left[ \left| \phi_{xx} \right|  +  \left| \zeta_{xx} \right|   +  \left( \left| \phi_x \right| + \left| \zeta_x \right| \right)\left( \left| \phi_x \right|  +  \left| \bar{\theta}_x \right|  \right)   \right]    \,\mathrm{d}x      \nonumber \\[2mm]
   & \quad  + C \int_{\mathbb{R}}  \left| \psi_{xxx} \right| \cdot \left[ \left( \left| \phi \right|  +  \left| \zeta \right| \right) \left( \left| \phi_{xx} \right|  +  \phi_x^2  +  \bar{\theta}_x^2   +  \left| \bar{\theta}_{xx} \right| \right)   \right]   \,\mathrm{d}x      \nonumber \\[2mm]
   & \leq C \eta \| \psi_{xxx} \|^2     +  C \| \left( \phi _{x}, \zeta _x, \phi _{xx}, \zeta _{xx} \right) \|^2   + C \delta (1+t)^{-\frac{3}{2}},     \label{y3}
 \end{align}
\begin{align} 
  Y_4
  & \leq  C \int_{\mathbb{R}} \left| \psi _{xxx} \right| \cdot \left( \left| \phi \partial_x^4 \bar{\theta} \right|  +  \left| \phi_x \bar{\theta}_{xxx}  \right|  + \left| \phi _{xx} \bar{\theta} _{xx} \right|  +  \left| \phi_x^2 \bar{\theta}_{xx} \right| \right)  \,\mathrm{d}x      \nonumber \\[2mm]
  & \leq  C \eta \| \psi _{xxx}  \|^2   +  C \delta(1+t)^{-\frac{3}{2}}   +  C \| (\phi_x, \phi _{xx} ) \|^2 ,     \label{y4}
\end{align}
\begin{equation}\label{y5}
   \left| Y_5 \right|  \leq C \int_{\mathbb{R}} \left| \partial_x^4 \bar{\theta} \right| \left| \psi_{xxx} \right| \,\mathrm{d}x  \leq C \eta \| \psi _{xxx}  \|^2  +  C \delta (1+t)^{-\frac{3}{2}}
\end{equation}
and
\begin{align} 
  Y_6
  & \leq C \int_{\mathbb{R}} \left| \psi _{xxx} \right| \cdot \left| E + \psi b + \bar{u} b  \right|  \,\mathrm{d}x       + C \int_{\mathbb{R}} \left| \psi_{xxx} \right| \cdot \left| b_x \right|  \,\mathrm{d}x       \nonumber \\[2mm]
  &  \quad   +  C \int_{\mathbb{R}} \left| \psi_{xxx} \right| \left( \left| E_x \right| + \left| \psi_x \right| + \left| b_x \right| + \left| \bar{\theta}_{xx} \right|     \right)  \,\mathrm{d}x        \nonumber \\[2mm]
  &  \leq  C \eta \| \psi_{xxx} \|^2   +  C \| \left( \psi_x, E_x, b_x, E + \psi b + \bar{u} b  \right) \|^2  +  C \delta (1+t)^{-\frac{3}{2}}.      \label{y6}
\end{align}
Putting $ \eqref{y1y2}$--$\eqref{y6}$ into $\eqref{psixx0}$ and choosing $\eta$ suitably small, we can reach
\begin{align} 
   &\qquad \frac{\mathrm{d}}{\mathrm{d}t} \int_{\mathbb{R}} \frac{1}{2} \psi_{xx}^2 \,\mathrm{d}x       + C \int_{\mathbb{R}} \psi_{xxx}^2 \,\mathrm{d}x     \nonumber\\[2mm]
   & \leq     C \| \left( \phi_x, \psi_x, \zeta_x, E_x, b_x, \phi_{xx}, \psi_{xx}, \zeta_{xx}, E + \psi b + \bar{u} b  \right)  \|^2     + C \delta (1+t)^{-\frac{3}{2}} .     \label{psixx1}
\end{align}
Estimate \eqref{psixx} thus follows from \eqref{psixx1}, \eqref{jibennenglianggai}, \eqref{exbx}, \eqref{phix}, \eqref{psixzetax} and \eqref{phixx}.

Lemma \ref{psixxguji} is proved.
\end{proof}

\begin{lemma}\label{zetaxxguji}
Suppose that all the conditions in Theorem \ref{thm1} hold. For all $0 < t < T $, there exists a constant $\bar{C}$ depending only on $v_\pm$ and $|u_-|$ such that if $ 0< \varepsilon <\bar{C} $, then
\begin{equation}\label{zetaxx}
  \| \zeta_{xx} \|^2    +   \int_0^t \| \zeta_{xxx} \|^2  \,\mathrm{d}\tau    \leq    C \delta^{\frac{2}{3}}    +  C \| ( \phi_0, \psi_0, \zeta_0 ) \|^2_{H^2}   +  C \| ( E_0, b_0 ) \|^2_{H^1} .
\end{equation}

\end{lemma}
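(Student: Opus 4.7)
The plan is to mimic the derivation of Lemma \ref{psixxguji}, but starting from $\eqref{raodong}_3$ instead of $\eqref{raodong}_2$. Concretely, I would differentiate $\eqref{raodong}_3$ once in $x$, multiply the resulting identity by $-\zeta_{xxx}$, and integrate over $\mathbb{R}$. After an integration by parts on the principal diffusive term $\kappa\bigl(\frac{\bar v\zeta_x-\phi\bar\theta_x}{v\bar v}\bigr)_{xx}$, this produces the basic identity
\begin{equation*}
\frac{\mathrm d}{\mathrm dt}\int_{\mathbb{R}}\frac{R}{2(\gamma-1)}\zeta_{xx}^2\,\mathrm dx
+\kappa\int_{\mathbb{R}}\frac{\zeta_{xxx}^2}{v}\,\mathrm dx
=\sum_{i=1}^{N} Z_i,
\end{equation*}
where the $Z_i$ collect, respectively, commutator terms of the form $\zeta_{xxx}\cdot(\text{lower-order products involving }\phi_x,\zeta_x,\bar v_x,\bar\theta_x)$, the convective remainder $(p\psi_x)_x\zeta_{xxx}$, the contact-wave source $\bigl[\bar u_t-\mu(\bar u_x/\bar v)_x\bigr]_x\zeta_{xxx}$, the viscous heating $\mu\bigl[\frac{(\psi_x+\bar u_x)^2}{v}\bigr]_x\zeta_{xxx}$, and the electromagnetic heating $\bigl[v(E+\psi b+\bar u b)^2\bigr]_x\zeta_{xxx}$.

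For each $Z_i$, the strategy is uniform: use the Cauchy inequality to absorb a factor $\eta\|\zeta_{xxx}\|^2$ on the left, control $L^\infty$-norms of $(\phi_x,\psi_x,\zeta_x,E,b)$ by $\sqrt 2\,\varepsilon_0$ via \eqref{raodongwuqiongguji} and the Sobolev inequality \eqref{lwuqiong}, and bound the decaying contact-wave weights $\bar v_x,\bar\theta_x,\bar u_x,\bar u_t$ using Lemma \ref{jianduanbol2mo}, which produces terms of the form $C\delta(1+t)^{-3/2}$. The second-derivative quantities $\phi_{xx},\psi_{xx},\zeta_{xx}$ that appear (for example in $(\psi_x+\bar u_x)^2_x$ and in the commutators) are left as $L^2$ quantities and will be absorbed later by invoking the previously established bounds. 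The resulting inequality should read, after choosing $\eta$ small,
\begin{equation*}
\frac{\mathrm d}{\mathrm dt}\|\zeta_{xx}\|^2+C\|\zeta_{xxx}\|^2
\leq C\delta(1+t)^{-3/2}
+C\bigl\|(\phi_x,\psi_x,\zeta_x,E_x,b_x,\phi_{xx},\psi_{xx},\zeta_{xx},E+\psi b+\bar u b)\bigr\|^2.
\end{equation*}

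The main obstacle will be the electromagnetic heating term $\bigl[v(E+\psi b+\bar u b)^2\bigr]_x\zeta_{xxx}$. Expanding, this produces a factor $2v(E+\psi b+\bar u b)\cdot(E_x+\psi_x b+\psi b_x+\bar u_x b+\bar u b_x)$ plus $v_x(E+\psi b+\bar u b)^2$, all multiplied by $\zeta_{xxx}$. I plan to handle it by splitting: the factor $|E+\psi b+\bar u b|$ is small by \eqref{raodongwuqiongguji}, so after Cauchy's inequality one controls it by $\eta\|\zeta_{xxx}\|^2+C\varepsilon_0^2\|(E_x,\psi_x,b_x)\|^2+C\|E+\psi b+\bar u b\|^2+C\delta\|b\omega\|^2$, using $|\bar u_x|\le C\delta\omega^2$ from \eqref{shuaijianlv1}. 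Every resulting $L^2(0,t)$ quantity on the right, namely $\|(\phi_x,\psi_x,\zeta_x,E_x,b_x,\phi_{xx},\psi_{xx},\zeta_{xx},E+\psi b+\bar u b)\|_{L^2_tL^2_x}^2$ together with $\int_0^t\|b\omega\|^2\,\mathrm d\tau$, is already controlled by the previously established estimates \eqref{jibennenglianggai}, \eqref{daiquangujigai}, \eqref{exbx}, \eqref{phix}, \eqref{psixzetax}, \eqref{phixx}, and \eqref{psixx}. Integrating in $t$ from $0$ to $t$, choosing $\varepsilon_0,\delta$ suitably small, and inserting these bounds yields \eqref{zetaxx}.
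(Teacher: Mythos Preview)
Your approach is essentially identical to the paper's: differentiate $\eqref{raodong}_3$ in $x$, multiply by $-\zeta_{xxx}$, integrate, and bound the resulting $Z_i$ via Cauchy, \eqref{raodongwuqiongguji}, and the contact-wave decay in Lemma~\ref{jianduanbol2mo}, then close by integrating in time and invoking the earlier lemmas \eqref{jibennenglianggai}, \eqref{daiquangujigai}, \eqref{exbx}, \eqref{phix}, \eqref{psixzetax}, \eqref{phixx}, \eqref{psixx}. One small slip: the ``contact-wave source'' $\bigl[\bar u_t-\mu(\bar u_x/\bar v)_x\bigr]_x$ you listed belongs to $\eqref{raodong}_2$, not $\eqref{raodong}_3$; the actual source term here is $\bigl(\tfrac{R\zeta - p_+\phi}{v}\bar u_x\bigr)_x\zeta_{xxx}$, which is handled in the same way (Cauchy plus $|\bar u_x|,|\bar u_{xx}|\le C\delta\omega^2$) and produces the same $C\delta(1+t)^{-3/2}+C\|(\phi_x,\zeta_x)\|^2$ contribution.
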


\begin{proof}[Proof]
Differentiating $\eqref{raodong}_3$ with respect to $x$ and multiplying by $ -\zeta_{xxx} $, then integrating the resulting equality with respect to $x$ over $\mathbb{R}$, we can get
\begin{align} 
&\qquad \frac{\mathrm{d}}{\mathrm{d}t}\int_{\mathbb{R}} \frac{R}{2(\gamma-1)}\zeta^2_{xx} \,\mathrm{d}x    +   \kappa\int_{\mathbb{R}} \frac{{\zeta_{xxx}^2}}{v} \,\mathrm{d}x     \nonumber \\[2mm]
& =  \kappa \int_{\mathbb{R}} \left[ \frac{\zeta_{xx} v_x}{v^2}  +  \left( \frac{\zeta_x v_x}{v^2} \right)_x  + \left( \frac{\phi \bar{\theta}_x }{v \bar{v}} \right)_{xx} \right] \zeta_{xxx}  \,\mathrm{d}x        \nonumber \\[2mm]
& \quad  + \int_{\mathbb{R}} \left( p \psi_x \right)_x \zeta_{xxx}  \,\mathrm{d}x   +  \int_{\mathbb{R}} \left( \frac{R \zeta - p_+ \phi}{v} \bar{u}_x \right)_x \zeta_{xxx}  \,\mathrm{d}x        \nonumber \\[2mm]
& \quad  -\mu \int_{\mathbb{R}} \left[ \frac{\left( \psi_x + \bar{u}_x \right)^2  }{v} \right]_x \zeta_{xxx}  \,\mathrm{d}x      - \int_{\mathbb{R}} \left[ v \left( E + \psi b + \bar{u} b  \right)^2   \right]_x \zeta _{xxx}   \,\mathrm{d}x        = : \sum_{i=1}^5 Z_i.    \label{zetaxx0}
\end{align}
Similar to the treatment of \eqref{psixx0},
due to $\|(\phi_x, \psi_x, \zeta_x)\|_{L^{\infty}}\leq \sqrt{2}\varepsilon_0 $ in \eqref{raodongwuqiongguji},
we can estimate the \eqref{zetaxx0} as follows:
\begin{align} 
  Z_1
  & \leq C \int_{\mathbb{R}} \left( \left| \phi_{xx} \bar{\theta}_x  \right|    + \left| \phi_x^2 \bar{\theta}_x \right|     +  \left| \phi_x \bar{\theta}_{xx} \right|      +  \left| \phi \bar{\theta}_{xxx} \right| \right) \left| \zeta_{xxx} \right| \,\mathrm{d}x       \nonumber \\[2mm]
  & \quad   + C \int_{\mathbb{R}} \left(  \left| \zeta_{xx} \phi_x \right|  +  \left| \zeta_{xx} \bar{\theta}_x  \right|  + \left| \zeta_x \phi_{xx} \right|   + \left| \zeta_x \phi_x^2 \right|    + \left| \zeta_x \bar{\theta}_{xx}  \right|  \right) \left| \zeta_{xxx} \right|   \,\mathrm{d}x      \nonumber \\[2mm]
  & \leq C \eta \| \zeta_{xxx} \|^2       +  C \| \left( \phi_{x}, \phi_{xx}, \zeta_x, \zeta_{xx} \right) \|^2     +  C \delta (1+t)^{- \frac{3}{2}},    \label{z1}
\end{align}
\begin{align} 
  Z_2
  & \leq C \int_{\mathbb{R}} \left[ \left( \left| \phi_x \right|  +  \left| \zeta_x \right|  +  \left| \bar{\theta}_x \right| \right) \left| \psi_x \right| +  \left| \psi_{xx} \right| \right]   \left| \zeta_{xxx} \right| \,\mathrm{d}x
   \nonumber \\[2mm]
  & \leq C \eta \| \zeta _{xxx}  \|^2   +  C \| (\psi_x, \psi_{xx}) \|^2,     \label{z2}
\end{align}
\begin{align} 
   Z_3
   & \leq  C \int_{\mathbb{R}}  \left[ \left(  \left| \phi_x \right|  +  \left| \zeta_x \right|  \right) \left| \bar{\theta}_{xx} \right|   +  \left(  \left| \phi \right|  +  \left| \zeta \right| \right)  \left| \bar{\theta}_{xxx} \right| \right]   \left| \zeta_{xxx} \right| \,\mathrm{d}x     \nonumber \\[2mm]
   & \leq C \eta \| \zeta_{xxx} \|^2   +  C \| (\phi_x, \zeta_x) \|^2   +  C \delta (1+t)^{-\frac{3}{2}} ,     \label{z3}
\end{align}
\begin{align}
   Z_4
   & = - \mu \int_{\mathbb{R}} \left[ \frac{2 \left( \psi_x + \bar{u}_x \right) \left( \psi_{xx} + \bar{u}_{xx} \right)  }{v}   - \frac{\left( \psi_x + \bar{u}_x \right)^2 v_x }{v^2} \right] \zeta_{xxx} \,\mathrm{d}x    \nonumber \\[2mm]
   & \leq C \int_{\mathbb{R}} \left[ \left( \left| \psi_x  \right|  + \left| \bar{\theta}_{xx} \right| \right) \left( \left| \psi_{xx} \right| + \left| \bar{\theta}_{xxx} \right| \right)  +  \left( \psi_x^2 + \bar{\theta}_{xx}^2  \right) \left( \left| \phi_x \right| + \left| \bar{\theta}_x \right| \right) \right] \left| \zeta_{xxx} \right| \,\mathrm{d}x      \nonumber \\[2mm]
   & \leq  C \eta \| \zeta_{xxx} \|^2          + C \| \left( \psi_x, \psi_{xx} \right)  \|^2        + C \delta (1+t)^{-\frac{3}{2}}    \label{z4}
\end{align}
and
\begin{align}
   Z_5
  & \leq   C\int_{\mathbb{R}} \left( \left| \phi_x \right|  +  \left| \bar{\theta}_x \right| \right) \left( E + \psi b + \bar{u} b  \right)^2  \left| \zeta_{xxx} \right|  \,\mathrm{d}x       \nonumber \\[2mm]
  & \quad  +  C\int_{\mathbb{R}} \left| E + \psi b + \bar{u} b  \right| \left( \left| E_x \right| + \left| \psi_x b \right|  +  \left| \psi b_x \right|   +  \left| \bar{u}_x b \right|  +  \left| \bar{u} b_x \right| \right) \left| \zeta_{xxx} \right| \,\mathrm{d}x      \nonumber \\[2mm]
  & \leq   C \eta \| \zeta_{xxx} \|^2   +  C \| \left( \psi_x, E_x, b_x, E + \psi b + \bar{u} b  \right) \|^2   + C \delta (1+t)^{-\frac{3}{2}}.        \label{z5}
\end{align}
Putting $ \eqref{z1} $--$\eqref{z5}$ into $\eqref{zetaxx0}$ and choosing $\eta$ suitably small, we can obtain
\begin{align}
   &\qquad \frac{\mathrm{d}}{\mathrm{d}t}\int_{\mathbb{R}} \frac{R}{2(\gamma-1)}\zeta^2_{xx} \,\mathrm{d}x       +  C \int_{\mathbb{R}} \zeta_{xxx}^2 \,\mathrm{d}x     \nonumber \\[2mm]
   & \leq  C \| \left( \phi_x, \psi_x, \zeta_x, E_x, b_x, \phi_{xx}, \psi_{xx}, \zeta_{xx}, E + \psi b + \bar{u} b  \right)\|^2  +  C \delta (1+t)^{-\frac{3}{2}} .     \label{zetaxx1}
\end{align}
Then integrating $ \eqref{zetaxx1} $ with respect to $t$, together with \eqref{jibennenglianggai}, \eqref{exbx}, \eqref{phix}, \eqref{psixzetax}, \eqref{phixx} and \eqref{psixx}, we conclude $ \eqref{zetaxx} $.

The proof of Lemma \ref{zetaxxguji} is completed.
\end{proof}

\begin{proof}[Proof of Proposition \ref{prop1}:]
We combine Lemma \ref{dijieguji}--Lemma \ref{zetaxxguji}, then choose $\varepsilon_0$ and $\delta$ small enough to finish the proof of Proposition \ref{prop1}.
\end{proof}


\section{Proof of Theorem \ref{thm2} (Composite wave)}\label{section4}

To prove Proposition \ref{prop1} for composite wave of the pattern of $ R_1CR_3 $,
noticing that $ \left( V^r_\pm, U^r_\pm, \Theta^r_\pm \right)  $ satisfies Euler system \eqref{nstuidao2} and $ \left( V^c, U^c, \Theta^c \right) $ satisfies $\eqref{lagrange}_1$, \eqref{gouzao1} and \eqref{smoothjianduanbo};
we transform the Cauchy problem $\eqref{lagrange}$, $\eqref{chuzhi'}$, $\eqref{wuqiong1}$ as
\begin{equation}\label{raodong2}
\left\{
\begin{aligned}
&\phi_t-\psi_x=0, \\[2mm]
&\psi_t + \left( \frac{R\zeta - P \phi}{v} \right)_x     = \mu \left( \frac{\psi_x}{v} \right)_x    + F    -v(E+\psi b+ U b)b,     \\[2mm]
&\frac{R}{\gamma - 1} \zeta_{t}     + \left( pu_x  - PU_x \right)     = \kappa\left( \frac{V \zeta_x-\phi \Theta_x }{v V} \right)_x     + G  + v \left( E + \psi b + U b  \right)^2,      \\[2mm]
&\varepsilon \left(E_{t} - \frac{u}{v}E_x\right) -\frac{1}{v}b_{x} + E +\psi b + U b = 0,     \\[2mm]
&b_{t}-\frac{u}{v}b_x-\frac{1}{v}E_{x}=0,   \\[2mm]
&(\phi, \psi, \zeta, E, b)(x,0) = (\phi_0, \psi_0, \zeta_0, E_0, b_0)(x),\quad x \in \mathbb{R},    \\[2mm]
&\lim_{x \to \pm \infty}(\phi_0, \psi_0, \zeta_0, E_0, b_0)(x) = 0,
\end{aligned}
\right.
\end{equation}
where
\begin{equation*}
 P  = \frac{R\Theta}{V},     \qquad      P^r_\pm = \frac{R\Theta^r_\pm}{V^r_\pm}, \qquad    p^{m} = P^c = \frac{R \Theta^{c}}{V^{c}},
\end{equation*}
\begin{equation*}
 F = \left( P^r_- + P^r_+ - P \right)_x   +  \left[ \mu \left( \frac{U_x}{v} \right)_x  - U^c_t  \right]
\end{equation*}
and
\begin{align} 
  & G = \left[ \left( p^m - P \right) U^c_x  +  \left( P^r_- - P \right) \left( U^r_- \right)_x    + \left( P^r_+ - P \right) \left( U^r_+ \right)_x    \right]     \nonumber \\[2mm]
  & \qquad + \mu \frac{\left( \psi_x + U_x \right)^2 }{v}   +  \kappa \left[ \left( \frac{\Theta_x}{V} \right)_x  -  \left(\frac{\Theta^c_x}{V^c} \right)_x  \right] .  \nonumber
\end{align}

Like Lemma \ref{dijieguji} and Lemma \ref{quanguji}, the following key estimate holds.

\begin{lemma}\label{lemma4.1}
Suppose that all the conditions in Theorem \ref{thm2} hold. For all $0 < t < T $, there exists a constant $\bar{C}$ depending only on $ v_\pm $, $ u_\pm $ and $ \theta_\pm $ such that if $ 0< \varepsilon <\bar{C} $, then
\begin{align} 
 & \qquad   \| \left( \phi, \psi, \zeta, \sqrt{\varepsilon}E, b \right)  \|^2         +  \int_0^t   \| ( \psi_x, \zeta_x, E+\psi b+ U b ) \|^2  \,\mathrm{d}\tau     \nonumber \\[2mm]
 & \quad       +  \int_0^t\int_{\mathbb{R}} \left( \phi^2 + \zeta^2 + \varepsilon E^2 + b^2  \right)   \left( \left( U^r_- \right)_x  +  \left( U^r_+ \right)_x  \right)    \,\mathrm{d}x \mathrm{d}\tau    \nonumber \\[2mm]
 & \leq   C\| (\phi_0, \psi_0, \zeta_0, E_0, b_0) \|^2      + C\delta^{\frac{1}{6}}      + C \delta^{\frac{1}{2}} \int_0^t \| \phi_x \|^2  \,\mathrm{d}\tau      + C \delta^{\frac{1}{2}} \int_0^t \| b_x \|^2  \,\mathrm{d}\tau .     \label{jibennengliang'}
\end{align}
Here $\omega$ is the function defined in \eqref{omegarehe} with $ \alpha = {\hat{c}}/{4} $.

\end{lemma}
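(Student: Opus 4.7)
The plan is to mimic the derivation of Lemma \ref{dijieguji} and Lemma \ref{quanguji}, but with the background profile $(\bar v,\bar u,\bar\theta)$ replaced by the composite profile $(V,U,\Theta)$, and to exploit the monotonicity $(U^r_\pm)_x\geq 0$ from Lemma \ref{rare-pro}(i) to extract the new weighted good term on the left-hand side. First, I would multiply $\eqref{raodong2}_1$ by $-R\Theta(v^{-1}-V^{-1})$, $\eqref{raodong2}_2$ by $\psi$, and $\eqref{raodong2}_3$ by $\zeta/\theta$, then add them up. This produces, after integration by parts, an identity of the form
\begin{equation*}
\frac{d}{dt}\int_{\mathbb{R}}\Bigl(\tfrac{1}{2}\psi^2+R\Theta\,\Phi(v/V)+\tfrac{R}{\gamma-1}\Theta\,\Phi(\theta/\Theta)\Bigr)\,dx+c\|(\psi_x,\zeta_x)\|^2+\mathcal{G}^r=\mathcal{F}+\mathcal{Q}+\mathcal{E},
\end{equation*}
where $\mathcal{G}^r\gtrsim\int_{\mathbb{R}}(\phi^2+\zeta^2)\bigl((U^r_-)_x+(U^r_+)_x\bigr)\,dx$ comes from the nonlinear pressure/flux terms (and has the correct sign because $(U^r_\pm)_x\geq 0$, together with the $R_1CR_3$ geometry), $\mathcal{F}$ collects linear error terms involving $F$, $\mathcal{Q}$ collects the viscous contact-wave part handled by the heat-kernel weight with $\alpha=\hat c/4$ exactly as in Lemma \ref{dijieguji}, and $\mathcal{E}:=\int v(E+\psi b+Ub)^2\zeta/\theta\,dx-\int v(E+\psi b+Ub)\psi b\,dx$ is the electromagnetic contribution.

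Next, to control $\mathcal{F},\mathcal{Q}$ I would use the wave-separation decomposition $\mathbb{R}\times(0,t)=\Omega_-\cup\Omega_c\cup\Omega_+$ in Lemma \ref{rare-pro}(iii): inside $\Omega_c$ the contact-wave derivatives dominate and the standard contact-wave estimate of Lemma \ref{dijieguji} applies verbatim, producing the term $C\delta^{2/3}\int_0^t\int_{\mathbb{R}}(\phi^2+\zeta^2+b^2)\omega^2\,dx\,d\tau$; inside $\Omega_\mp$ the rarefaction derivatives dominate and one uses Lemma \ref{rare-pro}(ii) together with the weighted good term in $\mathcal{G}^r$ to absorb the quadratic error contributions, and the exponentially decaying remainders from Lemma \ref{rare-pro}(iii) to kill cross-interaction terms. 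The $\|w_x\|_{L^1}$-type decay for $(U^r_\pm)_{x}$ from Lemma \ref{rare-pro}(ii) with $q=1$ integrates in time to yield the $\delta^{1/6}$ level residual (after an optimization $\delta^a$ vs $(1+\tau)^{-1+a}$, the exponent $1/6$ is what survives after Cauchy-Schwarz against $\|\psi\|^{1/2}\|\psi_x\|^{1/2}$), consistent with the bound in \eqref{jibennengliang'}.

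Then, to turn the bad term in $\mathcal{E}$ into the compound good term $\int v(E+\psi b+Ub)^2\,dx$, I would add two Maxwell computations: multiply $\eqref{raodong2}_4$ by $vE$ and $\eqref{raodong2}_5$ by $vb$ and integrate to get
\begin{equation*}
\frac{d}{dt}\int_{\mathbb{R}}\tfrac{1}{2}(\varepsilon vE^2+vb^2)\,dx+\int_{\mathbb{R}}v(E+\psi b+Ub)E\,dx=0,
\end{equation*}
and multiply $\eqref{raodong2}_4$ by $vUb$, using $\eqref{raodong2}_5$, to get
\begin{equation*}
\frac{d}{dt}\int_{\mathbb{R}}\varepsilon vUEb\,dx+\int_{\mathbb{R}}v(E+\psi b+Ub)Ub\,dx=-\tfrac{1}{2}\int_{\mathbb{R}}U_x(\varepsilon E^2+b^2)\,dx+\int_{\mathbb{R}}\varepsilon(vU_t-uU_x)Eb\,dx.
\end{equation*}
Summing the two identities produces $\int v(E+\psi b+Ub)^2\,dx$ on the left, which dominates the bad term in $\mathcal{E}$ after Young's inequality on the $\zeta/\theta$ factor. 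The crucial new point is that $-\tfrac12 U_x=-\tfrac12 U^c_x-\tfrac12(U^r_-)_x-\tfrac12(U^r_+)_x$: the rarefaction contribution is negative and moves to the left, enlarging the weighted good term in $\mathcal{G}^r$ to also cover $(\varepsilon E^2+b^2)((U^r_-)_x+(U^r_+)_x)$, while the $U^c_x$ part is treated as in \eqref{tuidao2}--\eqref{remaining3} of Lemma \ref{dijieguji} (splitting $E=E+\psi b+Ub-Ub-\psi b$, bounding $\|\psi b\|$ by Sobolev and then using $\varepsilon\delta^{1/6}\leq 1$).

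Finally, to close the estimate I would use the smallness of $\varepsilon$: by Young's inequality $|\varepsilon vUEb|\leq \tfrac{25}{64}\varepsilon vE^2+\tfrac14 vb^2$ provided $\varepsilon(\max|u_\pm|)^2$ is small compared to $\min\{v_\pm\}/\max\{v_\pm\}$, which fixes the constant $\bar C$ in the statement via \eqref{vepu-2'} (cf.\ Remark \ref{remark3}); and $\Phi(\cdot)\sim(\cdot-1)^2$ near $1$ (as in \eqref{etajieguo}) gives coercivity of the left-hand integral in $(\phi,\psi,\zeta)$. Integrating the combined identity over $[0,t]$, absorbing the $\|\phi_x\|^2$ and $\|b_x\|^2$ terms into the right-hand side with small coefficients $C\delta^{1/2}$, and keeping the heat-kernel weighted term $C\delta^{2/3}\int\int(\phi^2+\zeta^2+b^2)\omega^2$ (which will be handled in the analogue of Lemma \ref{quanguji} for the composite wave), yields \eqref{jibennengliang'}. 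The hardest step I anticipate is controlling the rarefaction--contact wave interaction terms in $F$ and $G$ in $\Omega_c$ without losing decay, because $V^c$ is not constant there and the simple Cauchy--Schwarz inequality used for the pure contact wave must be combined with the exponential decay of $V^r_\pm-v_\pm^m$ from Lemma \ref{rare-pro}(iii); this is what generates the $\delta^{1/6}$ rate rather than $\delta^{2/3}$.
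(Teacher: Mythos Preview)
Your overall strategy matches the paper's proof: derive the relative-entropy identity for $(\phi,\psi,\zeta)$, extract the rarefaction-weighted good term $Q_1[(U^r_-)_x+(U^r_+)_x]$, add the two Maxwell identities to produce the compound dissipation $\int v(E+\psi b+Ub)^2\,dx$, and pick up $\tfrac12\int(U^r_\pm)_x(\varepsilon E^2+b^2)\,dx$ from the $-\tfrac12 U_x$ term. Two points, however, need correction.

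First, you do not treat the rarefaction part of the remainder $\int\varepsilon(vU_t-uU_x)Eb\,dx$. Writing $U_t=U^c_t+(U^r_-)_t+(U^r_+)_t$ and using the Euler relation $(U^r_\pm)_t=-\lambda_\pm(V^r_\pm,s_\pm)(U^r_\pm)_x$, this term contributes
\[
-\int\varepsilon v\,\lambda_\pm(V^r_\pm,s_\pm)(U^r_\pm)_x\,bE\,dx\;-\;\int\varepsilon u(U^r_\pm)_x\,bE\,dx,
\]
which are \emph{not} automatically dominated by the good term $\tfrac12\int(U^r_\pm)_x(\varepsilon E^2+b^2)\,dx$. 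In the paper one splits $bE=b(E+ub)-ub^2$, uses Sobolev on $b$ and the $L^2$ decay of $(U^r_\pm)_x$ for the first piece, and for the second piece obtains coefficients $2C_3\varepsilon\beta$ and $4\varepsilon\beta^2$ (with $\beta=\max|u_\pm|$ and $C_3$ involving the sound speed $\max\sqrt{\gamma R\theta_\pm v_\pm^{-2}}$) multiplying $\int(U^r_\pm)_x b^2$ and $\int(U^r_\pm)_x\varepsilon E^2$; only after imposing $2C_3\varepsilon\beta<\tfrac18$ and $4\varepsilon\beta^2<\tfrac14$ do these get absorbed, leaving a net $\tfrac18\int(U^r_\pm)_x(\varepsilon E^2+b^2)$. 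This is precisely where the $\theta_\pm$-dependence of $\bar C$ enters (cf.\ \eqref{vepu-2'}); your Young-inequality bound on $|\varepsilon vUEb|$ alone, which only constrains $\varepsilon\beta^2$, is not sufficient.

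Second, the estimate \eqref{jibennengliang'} has no $\omega^2$-weighted term on the right-hand side, so you cannot ``keep'' $C\delta^{2/3}\int\!\!\int(\phi^2+\zeta^2+b^2)\omega^2$ and defer it: the composite-wave analogue of Lemma \ref{quanguji} (namely \eqref{daiquanguji'}, which now has the additional rarefaction-weighted term on its right) must be invoked \emph{inside} this proof to convert the weighted term into $C+C\int_0^t\|(\phi_x,b_x)\|^2\,d\tau$, after which the $\delta^{2/3}$ prefactor lets you close.
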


\begin{proof}[Proof]
Motivated by Lemma $8$ of \cite{huangfm2010}, for $ \Phi(s) = s -1 - \ln s $, $ s>0 $, noticing that
\begin{equation*}
  \left[ R \Theta \Phi\left( \frac{v}{V} \right) \right]_t
   = -R \Theta \left( \frac{1}{v} - \frac{1}{V} \right) \phi_t  - \frac{P \phi^2}{v V}V_t + R \Theta_t \Phi \left( \frac{v}{V} \right)
\end{equation*}
and
\begin{equation*}
  \left[ \Theta \Phi \left( \frac{\theta}{\Theta} \right) \right]_t
   = \frac{\zeta}{\theta}\zeta_t - \Phi \left( \frac{\Theta}{\theta} \right) \Theta_t,
\end{equation*}
we directly calculate
\begin{align} 
  & \left( \frac{1}{2} \psi^2 + R \Theta \Phi \left( \frac{v}{V} \right) + \frac{R}{\gamma-1} \Theta \Phi \left( \frac{\theta}{\Theta} \right)  \right)_t    \nonumber \\[3mm]
  =  & \left[  \psi \psi_t    -R \Theta \left( \frac{1}{v} - \frac{1}{V} \right) \phi_t    + \frac{R}{\gamma-1} \zeta_t \frac{\zeta}{\theta}       + \frac{\zeta}{\theta} \left( p - P \right) U_x  \right]          \nonumber \\[3mm]
  &  - \left[  - R \Theta_t \Phi \left( \frac{v}{V} \right)        + \frac{P \phi^2}{v V}V_t      + \frac{R}{\gamma-1} \Phi \left( \frac{\Theta}{\theta} \right) \Theta_t    + \frac{\zeta}{\theta} \left( p - P \right) U_x        \right] .     \label{shangliudui2-1}
\end{align}
Further calculation shows that
\begin{align} 
  & \psi \psi_t    -R \Theta \left( \frac{1}{v} - \frac{1}{V} \right) \phi_t    + \frac{R}{\gamma-1} \zeta_t \frac{\zeta}{\theta}       + \frac{\zeta}{\theta} \left( p - P \right) U_x      \nonumber \\[3mm]
  = & - \mu \frac{\psi_x^2}{v}   - \kappa \frac{\zeta_x^2}{v \theta}  +  \left[ \mu \frac{\psi \psi_x }{v}  - \left( R \zeta - P \phi \right) \frac{\psi}{v}  \right]_x       +  \kappa \left[  \frac{\zeta}{\theta v V} \left( V \zeta_x - \phi \Theta_x \right) \right] _x         \nonumber \\[3mm]
  &   + \kappa \frac{\phi \zeta_x \Theta_x}{v \theta V }    + \kappa \frac{\zeta \zeta_x \theta_x}{v \theta^2}      - \kappa \frac{\phi \zeta \Theta_x \theta_x}{v \theta^2 V}        + G \frac{\zeta}{\theta}   + F \psi    +  v  \left( E + \psi b + U b \right)^2 \frac{\zeta}{\theta}      \nonumber \\[3mm]
  &   -v \left( E + \psi b + U b  \right) \psi b.      \label{shangliudui2-2}
\end{align}
Due to
\begin{align} 
  -R \Theta_t
  & = \left( \gamma-1 \right) P^r_- \left( U^r_- \right)_x       +  \left( \gamma-1 \right) P^r_+ \left( U^r_+ \right)_x       - p^m U^c_x    \nonumber \\[2mm]
  & = \left( \gamma-1 \right) P \left( U^r_- \right)_x       +  \left( \gamma-1 \right) P \left( U^r_+ \right)_x       + \left( \gamma-1 \right) \left(  P^r_-  - P \right)  \left( U^r_- \right)_x     \nonumber \\[2mm]
  &\quad   +  \left( \gamma-1 \right) \left( P^r_+ - P \right)  \left( U^r_+ \right)_x      - p^m U^c_x  ,     \nonumber
\end{align}
we can compute that
\begin{small}
\begin{equation}\label{shangliudui2-3}
  - R \Theta_t \Phi \left( \frac{v}{V} \right)        + \frac{P \phi^2}{v V}V_t      + \frac{R}{\gamma-1} \Phi \left( \frac{\Theta}{\theta} \right) \Theta_t    + \frac{\zeta}{\theta} \left( p - P \right) U_x     = Q_1 \left[ \left( U^r_- \right)_x   +   \left( U^r_+ \right)_x   \right]     +  Q_2 ,
\end{equation}
\end{small}
where
\begin{align} 
  Q_1
  &  =  \frac{P \phi^2}{v V}  +  \left( \gamma - 1 \right) P \Phi \left( \frac{v}{V} \right)   - P \Phi \left( \frac{\Theta}{\theta} \right)    +  \frac{\zeta}{\theta} \left( p - P \right)      \nonumber \\[2mm]
  &  =  \gamma P \Phi \left( \frac{v}{V} \right)        + \left[ \frac{P \phi^2}{v V}   - P \Phi \left( \frac{v}{V} \right)   - P \Phi \left( \frac{\Theta}{\theta} \right)    +  \frac{\zeta}{\theta} \left( p - P \right)   \right]      \nonumber \\[2mm]
  &  =  \gamma P \Phi \left( \frac{v}{V} \right)       + \left[ \frac{P \phi^2}{v V}    - \frac{R \phi \zeta}{v V}      +  P \left(  \Phi \left( \frac{\theta}{\Theta} \right)  - \Phi \left( \frac{v}{V} \right)   \right)    \right]      \nonumber \\[2mm]
  &  =  \gamma P \Phi \left( \frac{v}{V} \right)       + P \Phi \left( \frac{\theta V}{\Theta v} \right)     \geq C \left( \phi^2 + \zeta^2 \right)    \label{Q1}
\end{align}
and
\begin{align}
Q_{2}
& =  U_{x}^{c}\left(\frac{P \phi^{2}}{v V}-p^{m} \Phi\left(\frac{v}{V}\right)+\frac{p^{m}}{\gamma-1} \Phi\left(\frac{\Theta}{\theta}\right)+\frac{\zeta}{\theta}(p-P)\right)    \nonumber \\[2mm]
&\quad  +(\gamma-1)\left(P^r_{-}-P\right)\left(U_{-}^{r}\right)_{x}\left(\Phi\left(\frac{v}{V}\right)-\frac{1}{\gamma-1} \Phi\left(\frac{\Theta}{\theta}\right)\right)   \nonumber \\[2mm]
&\quad  +(\gamma-1)\left(P^r_{+}-P\right)\left(U_{+}^{r}\right)_{x}\left(\Phi\left(\frac{v}{V}\right)-\frac{1}{\gamma-1} \Phi\left(\frac{\Theta}{\theta}\right)\right) .      \nonumber
\end{align}
Putting \eqref{shangliudui2-2} and \eqref{shangliudui2-3} into \eqref{shangliudui2-1}, we obtain that
\begin{align} 
 & \quad   \left( \frac{1}{2} \psi^2 + R \Theta \Phi \left( \frac{v}{V} \right) + \frac{R}{\gamma-1} \Theta \Phi \left( \frac{\theta}{\Theta} \right)  \right)_t        \nonumber \\[2mm]
 & \quad   + \mu \frac{\psi_x^2}{v}       + \kappa \frac{\zeta_x^2}{v \theta}       + Q_1 \left[ \left( U^r_- \right)_x   +   \left( U^r_+ \right)_x   \right]             + H_x         + Q      \nonumber \\[2mm]
 &   =  G \frac{\zeta}{\theta}    + F \psi    + v \left( E + \psi b + U b  \right)^2 \frac{\zeta}{\theta}   - v \left( E + \psi b + U b  \right) \psi b,     \label{shangliudui3}
\end{align}
where
\begin{equation*}
   H =   -\mu \frac{\psi \psi_x }{v}      + \left( R \zeta - P \phi \right) \frac{\psi}{v}        -  \kappa \frac{\zeta}{\theta v V} \left( V \zeta_x - \phi \Theta_x \right)
\end{equation*}
and
\begin{equation*}
  Q = Q_2     - \kappa \frac{\phi \zeta_x \Theta_x}{v \theta V }       - \kappa \frac{\zeta \zeta_x \theta_x}{v \theta^2}        + \kappa \frac{\phi \zeta \Theta_x \theta_x}{v \theta^2 V}.
\end{equation*}
Following the same computations as in Lemma $8$ of \cite{huangfm2010}, it holds that
\begin{align}  
\left| Q \right|
& \leq  C_{\eta} \left(\phi^{2} + \zeta^{2}\right) \left(\Theta_{x}^{2} + \left|\Theta^{c}_{x x}\right|\right)    + \eta \zeta_{x}^{2}     + C \delta \mathrm{e}^{-c_{0}(|x|+t)}      \nonumber \\[3mm]
& \leq  C_{\eta} \left(\phi^{2} + \zeta^{2}\right) \left(\delta^{\frac{1}{4}}(1+t)^{-\frac{7}{4}}    + \delta (1+t)^{-1} \mathrm{e}^{ -\frac{\hat{c} x^{2}}{1+t} } \right)        + \eta \zeta_{x}^{2}    + C \delta \mathrm{e}^{-c_{0}(|x|+t)},     \label{Qjue}
\end{align}
\begin{equation}\label{fl12}
\|F\|_{L^{1}} \leq C \delta^{\frac{1}{8}}(1+t)^{-\frac{7}{8}}  + C \delta^{\frac{1}{2}}\left\|\phi_{x}\right\|^{2}
\end{equation}
and
\begin{equation}\label{gl1}
\|G\|_{L^{1}} \leq C \delta^{\frac{1}{8}}(1+t)^{-\frac{7}{8}}    + C \left\|\psi_{x}\right\|^{2}.
\end{equation}
From \eqref{fl12}, \eqref{gl1} and the Sobolev inequality \eqref{lwuqiong}, we can deduce that
\begin{align}\label{jisuan-liufenzhiyi}
  &\qquad \int_{\mathbb{R}} \left( G \frac{\zeta}{\theta}    + F \psi \right)  \,\mathrm{d}x       \nonumber \\[2mm]
  & \leq  C \left( \| \zeta \|_{{L^{\infty}}}  +  \| \psi \|_{{L^{\infty}}} \right)  \left( \| G \|_{L^1}  +  \| F \|_{L^1}  \right)          \nonumber \\[2mm]
  & \leq  C \left( \| \zeta \|_{{L^{\infty}}}  +  \| \psi \|_{{L^{\infty}}} \right)
  \left( \delta^{\frac{1}{8}}(1+t)^{-\frac{7}{8}}     + \left\|\psi_{x}\right\|^{2}     + \delta^{\frac{1}{2}}\left\|\phi_{x}\right\|^{2}  \right)             \nonumber \\[2mm]
  & \leq C \left( \| \zeta \|^{\frac{1}{2} }  \| \zeta_x \|^{\frac{1}{2} }   + \| \psi \|^{\frac{1}{2} } \| \psi_x \|^{\frac{1}{2} }   \right) \cdot \delta^{\frac{1}{8}} (1+t)^{-\frac{7}{8}}     +  C \varepsilon_0 \left( \| \psi_x \|^2    + \delta^{\frac{1}{2} } \| \phi_x \|^2   \right)             \nonumber \\[2mm]
  & \leq   C \delta^{\frac{1}{6}}(1+t)^{-\frac{7}{6}}        + C \varepsilon_0^2 \left( \| \zeta_x \|^2   +  \| \psi_x \|^2   \right)         +  C \varepsilon_0 \left( \| \psi_x \|^2    + \delta^{\frac{1}{2} } \| \phi_x \|^2   \right) .
\end{align}
Integrating $\eqref{shangliudui3}$ with respect to $x$, and combining \eqref{Q1}, \eqref{Qjue} and \eqref{jisuan-liufenzhiyi}, then choosing $\eta$, $\varepsilon_0$ and $ \delta $ suitably small, we can obtain
\begin{align} 
&\quad \frac{\mathrm{d}}{\mathrm{d}t} \int_{\mathbb{R}} \left( \frac{1}{2} \psi^{2}       +  R \Theta \Phi\left(\frac{v}{V} \right)       + \frac{R}{\gamma-1} {\Theta} \Phi\left(\frac{\theta}{{\Theta}}\right) \right) \mathrm{d}x        \nonumber \\[2mm]
&\quad  + C\left\|\left(\psi_{x}, \zeta_{x}\right)\right\|^{2}       + C \int_{\mathbb{R}} \left( \phi^2 + \zeta^2 \right) \left[ \left( U^r_- \right)_x   +   \left( U^r_+ \right)_x  \right]   \,\mathrm{d}x      \nonumber \\[2mm]
&\leq   C \delta^{\frac{1}{6}} (1+t)^{-\frac{7}{6}}        + C \delta  \int_{\mathbb{R}}  \left(\phi^{2} + \zeta^{2}\right) \omega^2   \mathrm{d} x        + C \delta^{\frac{1}{2}} \| \phi_x  \|^2      \nonumber \\[2mm]
& \quad  + \int_{\mathbb{R}}  v(E+\psi b+ U b)^2\frac{\zeta}{\theta}\,{\rm{d}}x      -\int_{\mathbb{R}} v(E+\psi b+ U b)\psi b\,{\rm{d}}x  .      \label{jiben2}
\end{align}
Next we will go on to produce the composite good term: $ \int_0^t\int_{\mathbb{R}}  v(E+\psi b+ U b)^2 \,{\rm{d}}x\mathrm{d}\tau $.

Similar to the derivation of \eqref{tuidao1}, \eqref{tuidao2} in Lemma \ref{dijieguji}, by using the equation $ (vb)_t  -  (E + ub)_x  = 0  $, we have
\begin{equation}\label{tuidao1'}
\frac{\rm d}{{\rm d}t} \int_{\mathbb{R}} \frac{1}{2}(\varepsilon vE^2+vb^2) \,{\rm{d}}x       + \int_{\mathbb{R}}v(E + \psi b + U b)E \,{\rm{d}}x  = 0
\end{equation}
and
\begin{small}
\begin{align}
&\quad \frac{\rm d}{{\rm d}t}\int_{\mathbb{R}} \varepsilon  v U b E \,{\rm{d}}x    +\frac{1}{2}\int_{\mathbb{R}} [(U^r_-)_x+(U^r_+)_x] (\varepsilon E^2 + b^2)\,{\rm{d}}x      + \int_{\mathbb{R}} v (E+\psi b+ Ub) U b\,{\rm{d}}x     \nonumber \\[2mm]
& =  - \frac{1}{2} \int_{\mathbb{R}} U^c_x \left( \varepsilon E^2 + b^2 \right) \,\mathrm{d}x        + \int_{\mathbb{R}} \varepsilon \left( v  U^{c}_t  -  u U^{c}_x  \right)  E b \,{\rm{d}}x     \nonumber \\[2mm]
& \quad + \int_{\mathbb{R}} \varepsilon vb \left[  (U_{-}^{r})_t  +  (U_{+}^{r})_t \right]  E\,{\rm{d}}x       - \int_{\mathbb{R}} \varepsilon ub \left[ (U_{-}^{r})_x +  (U_{+}^{r})_x    \right]  E\,{\rm{d}}x,     \label{guji8}
\end{align}
\end{small}
respectively.
Set $ \beta :=  \max \left\{ \left| u_\pm \right|  \right\} > 0 $ for the sake of simplicity.
By employing similar procedures from \eqref{remaining1} to \eqref{remaining3}, we obtain
\begin{align} 
  &\quad - \frac{1}{2} \int_{\mathbb{R}} U^c_x \left( \varepsilon E^2 + b^2 \right) \,\mathrm{d}x        + \int_{\mathbb{R}} \varepsilon \left( v  U^{c}_t  -  u U^{c}_x  \right)  E b \,{\rm{d}}x      \nonumber \\[2mm]
  & \leq C \delta^{\frac{5}{6}} \int_{\mathbb{R}} \left( E + \psi b + U b  \right)^2  \,\mathrm{d}x     +  C \delta^{\frac{2}{3}}\int_{\mathbb{R}} b^2 \omega^2 \,\mathrm{d}x.      \label{jianduanboleisi}
\end{align}
Moreover, $\eqref{gouzaoxishubo}_2$ gives that
\begin{align} 
& \qquad \int_{\mathbb{R}} \varepsilon vb \left[  (U_{-}^{r})_t  +  (U_{+}^{r})_t \right]  E\,{\rm{d}}x       - \int_{\mathbb{R}} \varepsilon ub \left[ (U_{-}^{r})_x +  (U_{+}^{r})_x    \right]  E\,{\rm{d}}x      \nonumber \\[2mm]
& =: \int_{\mathbb{R}} \varepsilon vb (U_{\pm}^{r})_t  E\,{\rm{d}}x      - \int_{\mathbb{R}} \varepsilon ub (U_{\pm}^{r})_x   E\,{\rm{d}}x     \nonumber \\[2mm]
& =  - \int_{\mathbb{R}} \varepsilon v \lambda_\pm \left( V^r_\pm, s_\pm \right) \left( U^r_\pm \right)_x bE   \,\mathrm{d}x        - \int_{\mathbb{R}} \varepsilon ub (U_{\pm}^{r})_x   E\,{\rm{d}}x .      \label{j1j2}
\end{align}
Due to \eqref{fuhevshangjie}, \eqref{Uushangxiajie} and $ \max \left\{ \left| \lambda _\pm \left( V^r_\pm, s_\pm \right) \right| \right\} < \max \left\{ \sqrt{\gamma R \theta_\pm v_\pm ^{-2}} \right\} $,
we have
\begin{align} 
 &\quad - \int_{\mathbb{R}} \varepsilon v \lambda_\pm \left( V^r_\pm, s_\pm \right) \left( U^r_\pm \right)_x bE   \,\mathrm{d}x       \nonumber \\[2mm]
 & \leq   2 \max \left\{ v_\pm \right\} \cdot \max \left\{ \sqrt{\gamma R \theta_\pm v_\pm ^{-2}}  \right\} \int_{\mathbb{R}} \varepsilon  \left( U^r_\pm \right)_x \left| bE \right|  \,\mathrm{d}x      \nonumber \\[2mm]
 & =: C_3 \int_{\mathbb{R}} \varepsilon \left( U^r_\pm \right)_x \left| bE \right| \,\mathrm{d}x      \nonumber \\[2mm]
 & \leq C_3 \int_{\mathbb{R}} \varepsilon \left( U^r_\pm \right)_x \left| b \left( E + ub \right)  \right| \,\mathrm{d}x    +  C_3 \int_{\mathbb{R}} \varepsilon \left( U^r_\pm \right)_x \left| u b^2  \right| \,\mathrm{d}x      \nonumber \\[2mm]
 & \leq C_3 \varepsilon \| b \|_{L^{\infty}} \| \left( U^r_\pm \right)_x  \|  \| E + \psi b + U b  \|   + 2C_3 \varepsilon \beta \int_{\mathbb{R}}  \left( U^r_\pm \right)_x b^2  \,\mathrm{d}x      \nonumber \\[2mm]
 & \leq C \varepsilon \| b \|^{\frac{1}{2}} \| b_x \|^{\frac{1}{2}} \delta^{\frac{1}{2}} (1+t)^{-\frac{1}{2}} \| E + \psi b + U b  \|      + 2C_3 \varepsilon \beta \int_{\mathbb{R}}  \left( U^r_\pm \right)_x b^2  \,\mathrm{d}x       \nonumber \\[2mm]
 & \leq  C \varepsilon \varepsilon_0^{\frac{1}{2}} \delta^{\frac{1}{2}} \left(   (1+t)^{-2}      + \| b_x \|^2     + \| E + \psi b + U b  \|^2    \right)     + 2C_3 \varepsilon \beta \int_{\mathbb{R}}  \left( U^r_\pm \right)_x b^2  \,\mathrm{d}x .    \label{diyige0}
\end{align}
First choosing $ \varepsilon _0^{\frac{1}{2}} < 2 \beta $ and then taking $ 2C_3 \varepsilon \beta < \frac{1}{8} $ in \eqref{diyige0}, i.e.,
\begin{equation}\label{vepbeta1}
  \varepsilon  \beta < \frac{1}{32 \max \left\{ v_\pm \right\} \cdot \max \left\{ \sqrt{\gamma R \theta_\pm v_\pm ^{-2}}  \right\} }
\end{equation}
leads to
\begin{align} 
   &\quad - \int_{\mathbb{R}} \varepsilon v \lambda_\pm \left( V^r_\pm, s_\pm \right) \left( U^r_\pm \right)_x bE   \,\mathrm{d}x      \nonumber \\[2mm]
   & \leq  C \delta^{\frac{1}{2}} \left( (1+t)^{-2}      + \| b_x \|^2     + \| E + \psi b + U b  \|^2   \right)     + \frac{1}{8} \int_{\mathbb{R}}  \left( U^r_\pm \right)_x b^2  \,\mathrm{d}x .         \label{diyige}
\end{align}
By using the Cauchy inequality and $ \left| u \right| < 2\beta $, we obtain
\begin{align} 
  - \int_{\mathbb{R}} \varepsilon ub (U_{\pm}^{r})_x   E\,{\rm{d}}x
  & \leq \int_{\mathbb{R}}  \left( U^r_\pm \right)_x \left| b \right| \cdot 2\varepsilon \beta \left| E \right|  \,\mathrm{d}x
   \leq \int_{\mathbb{R}}  \left( U^r_\pm \right)_x \left( \frac{1}{4} b^2  +  4 \varepsilon ^2 \beta^2 E^2   \right)   \,\mathrm{d}x     \nonumber \\[2mm]
  & \leq \frac{1}{4} \int_{\mathbb{R}} \left( U^r_\pm \right)_x b^2 \,\mathrm{d}x     +  4\varepsilon \beta^2 \int_{\mathbb{R}} \left( U^r_\pm \right)_x \varepsilon E^2 \,\mathrm{d}x .     \label{dierge0}
\end{align}
Further taking $  \varepsilon \beta^2 < \frac{1}{16} $ in \eqref{dierge0} implies
\begin{equation}\label{dierge}
    - \int_{\mathbb{R}} \varepsilon ub (U_{\pm}^{r})_x   E\,{\rm{d}}x  \leq \frac{1}{4} \int_{\mathbb{R}} \left( U^r_\pm \right)_x b^2 \,\mathrm{d}x      +  \frac{1}{4} \int_{\mathbb{R}} \left( U^r_\pm \right)_x \varepsilon E^2 \,\mathrm{d}x .
\end{equation}
Thus combining \eqref{guji8}, \eqref{jianduanboleisi}, \eqref{diyige} and \eqref{dierge} leads to
\begin{align} 
   & \quad \frac{\rm d}{{\rm d}t}\int_{\mathbb{R}} \varepsilon  v U E b \,{\rm{d}}x     + \frac{1}{8} \int_{\mathbb{R}} (U^r_\pm)_x (\varepsilon E^2 + b^2)\,{\rm{d}}x      + \int_{\mathbb{R}} v (E+\psi b+ Ub) U b\,{\rm{d}}x      \nonumber \\[2mm]
   & \leq  C \delta^{\frac{1}{2}} \left(   (1+t)^{-2}   + \| b_x \|^2    + \| E + \psi b + U b  \|^2    \right)     + C \delta^{\frac{2}{3}} \int_{\mathbb{R}} b^2 \omega^2  \,\mathrm{d}x  .     \label{tuidao2'}
\end{align}
Summing \eqref{jiben2}, \eqref{tuidao1'} and \eqref{tuidao2'} up and following the similar calculation way as \eqref{etajieguo} and \eqref{danjifen1}, then integrating the resulting inequality with respect to $t$ yields that
 \begin{align} 
 & \qquad   \| \left( \phi, \psi, \zeta, \sqrt{\varepsilon}E, b \right)  \|^2         +  \int_0^t   \| ( \psi_x, \zeta_x, E+\psi b+ U b ) \|^2  \,\mathrm{d}\tau    \nonumber \\[2mm]
 & \quad       +  \int_0^t\int_{\mathbb{R}} \left( \phi^2 + \zeta^2 + \varepsilon E^2 + b^2  \right)   \left( \left( U^r_- \right)_x  +  \left( U^r_+ \right)_x  \right)    \,\mathrm{d}x \mathrm{d}\tau    \nonumber \\[2mm]
 & \leq   C\| (\phi_0, \psi_0, \zeta_0, E_0, b_0) \|^2      + C\delta^{\frac{1}{6}}      + C \delta^{\frac{2}{3}} \int_0^t\int_{\mathbb{R}} \left(\phi^{2} + \zeta^{2} + b^2 \right) \omega^2  \,\mathrm{d}x \mathrm{d}\tau     \nonumber \\[2mm]
 & \quad   + C \delta^{\frac{1}{2}} \int_0^t \| \phi_x \|^2  \,\mathrm{d}\tau      + C \delta^{\frac{1}{2}} \int_0^t \| b_x \|^2  \,\mathrm{d}\tau ,    \label{jibennengliang''}
 \end{align}
where we have taken $ \varepsilon \beta^2 \leq \frac{1}{16} $.
Finally, like \eqref{daiquanguji}, it is not hard to verify that for $ \alpha = {\hat{c}}/{4} $,
\begin{small}
\begin{align}
& \int_0^t\int_{\mathbb{R}}  \left(\phi^{2} + \psi^2 + \zeta^{2} + b^2 \right)  \omega^2  \,\mathrm{d}x \mathrm{d}\tau       \leq   C + C \int_0^t \| \left( \phi_x, \psi_x, \zeta_x, b_x, E + \psi b + \bar{u} b  \right)  \|^2  \,\mathrm{d}\tau       \nonumber \\[2mm]
& \qquad\qquad\qquad\qquad\qquad\quad \;\;    +  C \int_0^t\int_{\mathbb{R}} \left( \phi^2 + \zeta^2 + \varepsilon E^2 + b^2  \right) \left( \left( U^r_- \right)_x  +  \left( U^r_+ \right)_x  \right)    \,\mathrm{d}x \mathrm{d}\tau       \label{daiquanguji'}
\end{align}
\end{small}
holds. Then substituting \eqref{daiquanguji'} into \eqref{jibennengliang''} and choosing $ \delta $ small enough leads to \eqref{jibennengliang'}.

This completes the proof of Lemma \ref{lemma4.1}.
\end{proof}


It remains to prove the following lemma about the energy $\|(\sqrt{\varepsilon}E_x,b_x)\|^2$.

\begin{lemma} \label{exbxyijiedao'}
Suppose that all the conditions in Theorem \ref{thm2} hold. For all $0 < t < T $, there exists a constant $\bar{C}$ depending only on $ v_\pm $, $ u_\pm $ and $ \theta_\pm $ such that if $ 0< \varepsilon <\bar{C} $, then
\begin{align} 
& \qquad \|(\sqrt{\varepsilon}E_x,b_x)\|^2+\int_0^t\|(E_x,b_x)\|^2{\rm{d}}\tau      \nonumber \\[2mm]
&  \leq  C \|(\phi_0, \psi_0, \zeta_0)\|^2       + C \|( E_0, b_0)\|_{H^1}^2         +  C\delta^\frac{1}{6}       +  C ( \delta^{\frac{1}{2}}   +  \varepsilon _0 ) \int_0^t \| \phi_x \|^2  \,\mathrm{d}\tau.      \label{exbx'}
\end{align}

\end{lemma}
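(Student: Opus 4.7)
The plan is to mimic the strategy of Lemma \ref{exbxyijiedao} with the viscous contact profile $\bar u$ replaced by the composite profile $U=U^c+U_-^r+U_+^r-2u^m$. First I would differentiate $\eqref{raodong2}_4$ in $x$, multiply by $vE_x$, and differentiate $\eqref{raodong2}_5$ in $x$, multiply by $vb_x$; adding the two resulting identities, the term $\int b_{xx}E_x\,\mathrm{d}x$ cancels and I obtain
\[
\frac{\mathrm d}{\mathrm dt}\!\int_{\mathbb{R}}\!\Bigl(\tfrac{1}{2}\varepsilon vE_x^2+\tfrac{1}{2}vb_x^2\Bigr)\mathrm dx+\int_{\mathbb{R}}vE_x^2\,\mathrm dx=\mathcal R,
\]
where $\mathcal R$ collects terms of the form $u_x(E_x^2+b_x^2)$, $(u/v)v_xb_xE_x$, and $(Ub)_xvE_x$. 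I would then split $u_x=\psi_x+U_x$ and $v_x=\phi_x+V_x$, use $\|(\phi_x,\psi_x)\|_{L^\infty}\leq\sqrt{2}\varepsilon_0$ from the a priori assumption, together with the Sobolev bound $|U|\leq \tfrac{3}{2}\max\{|u_\pm|\}=:\tfrac{3}{2}\beta$ from \eqref{Uushangxiajie} and the pointwise bounds on $V_x$, $U_x$ supplied by Lemma \ref{rare-pro}. The only term that resists absorption by $\|(\psi_x,E_x,b_x)\|^2$ is $\tfrac12\!\int vU^2 b_x^2\,\mathrm dx$, which I would control by $\tfrac{9}{8}\beta^2\max\{v_\pm\}\|b_x\|^2$.

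The second stage produces the dissipation on $\|b_x\|^2$. Multiplying $\eqref{raodong2}_4$ by $-vb_x$ and using $\eqref{raodong2}_5$ to convert $\varepsilon vEb_{tx}$ into $\varepsilon E_x^2$ plus remainders (exactly as in \eqref{gujiExbx6}), I would obtain
\[
-\frac{\mathrm d}{\mathrm dt}\!\int_{\mathbb R}\varepsilon vEb_x\,\mathrm dx+\tfrac{1}{2}\|b_x\|^2\leq C\varepsilon(\delta+\varepsilon_0)\|(\phi_x,\psi_x,b_x,E_x)\|^2+\varepsilon\|E_x\|^2+C\|E+\psi b+Ub\|^2+C\varepsilon\delta\|E\omega\|^2.
\]
Multiplying this inequality by $4\beta^2\max\{v_\pm\}$ and adding it to the first-stage identity, the bad term $\tfrac{9}{8}\beta^2\max\{v_\pm\}\|b_x\|^2$ is overcome by $2\beta^2\max\{v_\pm\}\|b_x\|^2$, leaving a positive residue of order $\beta^2\max\{v_\pm\}\|b_x\|^2$. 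The $\|E_x\|^2$ coefficient becomes $\tfrac{1}{8}\min\{v_\pm\}-4\varepsilon\beta^2\max\{v_\pm\}$, which is positive under the first half of the smallness condition $\varepsilon\beta^2<\min\{v_\pm\}/(80\max\{v_\pm\})$ stated in Remark \ref{remark3}.

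The main obstacle, and the reason $\bar C$ has a second branch in Remark \ref{remark3}, is the cross interaction between the rarefaction wave and the magnetic field in the composite setting. Specifically, when one bounds $\int\varepsilon vUb\cdot E\,\mathrm dx$-type terms via Cauchy--Schwarz, the coefficient $\varepsilon\beta\cdot\sqrt{\gamma R}\max\{\sqrt{\theta_\pm}v_\pm^{-1}\}$ must be dominated by the good dissipation $\tfrac{1}{8}\int[(U_-^r)_x+(U_+^r)_x](\varepsilon E^2+b^2)\,\mathrm dx$ that was produced in Lemma \ref{lemma4.1}; this is precisely where the threshold $\varepsilon\beta<\bigl(32\max\{v_\pm\}\sqrt{\gamma R}\max\{\sqrt{\theta_\pm}v_\pm^{-1}\}\bigr)^{-1}$ enters, and selecting $\bar C$ as the minimum of these two bounds guarantees absorption.

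Finally I would integrate the combined differential inequality in time, bound the boundary term $4\varepsilon\beta^2\max\{v_\pm\}\!\int|vEb_x|\,\mathrm dx$ via Cauchy's inequality as in \eqref{danjifen2}, and insert the zero-order estimate \eqref{jibennengliang'} (whose right-hand side controls $\int_0^t\|E+\psi b+Ub\|^2\,\mathrm d\tau$, $\|\sqrt\varepsilon E\|^2$, $\int_0^t\int(U_\pm^r)_x(\varepsilon E^2+b^2)\,\mathrm dx\mathrm d\tau$, and $\int_0^t\|\psi_x\|^2\,\mathrm d\tau$) together with the weighted heat-kernel estimate \eqref{daiquanguji'} to dispose of $\|E\omega\|^2$. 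Choosing $\varepsilon_0$ and $\delta$ suitably small then yields \eqref{exbx'} with the advertised loss $C\delta^{1/6}+C(\delta^{1/2}+\varepsilon_0)\int_0^t\|\phi_x\|^2\mathrm d\tau$, where the exponent $\delta^{1/6}$ (weaker than the $\delta^{2/3}$ of the single-contact case) is inherited from Lemma \ref{lemma4.1} and reflects the interaction between the rarefaction waves and the electromagnetic field.
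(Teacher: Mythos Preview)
Your proposal follows essentially the paper's approach: the two-stage energy identity for $(\sqrt{\varepsilon}E_x,b_x)$, the linear combination to absorb the dangerous $\beta^2\max\{v_\pm\}\|b_x\|^2$ term, the time-integration with the boundary term handled by Cauchy's inequality, and the closure via Lemma~\ref{lemma4.1} and the weighted estimate \eqref{daiquanguji'}. The numerical constants differ (the paper uses the multiplier $\tfrac{5}{2}\beta^2\max\{v_\pm\}$ rather than $4\beta^2\max\{v_\pm\}$, and $v>\tfrac14\min\{v_\pm\}$ from \eqref{fuhevshangjie} gives $\tfrac{1}{16}\min\{v_\pm\}$ rather than $\tfrac18$), but these are immaterial.

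One point deserves correction. Your third paragraph places the constraint $\varepsilon\beta<\bigl(32\max\{v_\pm\}\sqrt{\gamma R}\max\{\sqrt{\theta_\pm}v_\pm^{-1}\}\bigr)^{-1}$ inside the present lemma, but in fact that threshold was already imposed in Lemma~\ref{lemma4.1} (see \eqref{diyige0}--\eqref{vepbeta1}), where the term $\int\varepsilon v\lambda_\pm(V_\pm^r,s_\pm)(U_\pm^r)_xbE\,\mathrm dx$ had to be absorbed by the rarefaction dissipation $\tfrac12\int(U_\pm^r)_x(\varepsilon E^2+b^2)\,\mathrm dx$. No such zero-order term $\int\varepsilon vUbE\,\mathrm dx$ arises in the first-derivative identity here; the only \emph{new} smallness condition generated by Lemma~\ref{exbxyijiedao'} is $\varepsilon\beta^2<\min\{v_\pm\}/(80\max\{v_\pm\})$, coming from the coefficient of $\|E_x\|^2$ after the linear combination. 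The second branch of $\bar C$ in Remark~\ref{remark3} is therefore inherited from Lemma~\ref{lemma4.1}, not produced in this lemma. With that clarification, your argument is correct.
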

\begin{proof}[Proof]
Firstly, by employing similar computations as from \eqref{Exbx1} to \eqref{Exbx3}, we have
\begin{align} 
&\qquad \frac{\mathrm{d}}{\mathrm{d}t} \int_{\mathbb{R}} \left( \frac{1}{2}\varepsilon vE_x^2 + \frac{1}{2}v b_x^2 \right)  \,\mathrm{d}x + \int_{\mathbb{R}} vE_x^2 \,\mathrm{d}x       \nonumber \\[2mm]
&= \int_{\mathbb{R}} \varepsilon u_x E_x^2 \,\mathrm{d}x  -\int_{\mathbb{R}} \varepsilon\frac{u}{v}v_x E_x^2 \,\mathrm{d}x + \int_{\mathbb{R}} u_x b_x^2 \,\mathrm{d}x - \int_{\mathbb{R}} \frac{u}{v} v_x b_x^2 \,\mathrm{d}x      \nonumber \\[2mm]
&\quad -2 \int_{\mathbb{R}} \frac{v_x}{v}b_xE_x \,\mathrm{d}x - \int_{\mathbb{R}} (\psi b)_x vE_x\,\mathrm{d}x - \int_{\mathbb{R}} (U b)_x vE_x\,\mathrm{d}x.    \label{Exbx3'}
\end{align}
Recall that $\left| U \right| < \frac{3}{2} \beta  $ and $ \| \left( \phi, \psi \right)  \|_{H^2} \leq \varepsilon_0  $. By using the decay rate of viscous contact wave, the Sobolev inequality \eqref{lwuqiong} and the Cauchy inequality, we can derive from \eqref{Exbx3'} that
\begin{align} 
& \quad  \frac{\mathrm{d}}{\mathrm{d}t} \int_{\mathbb{R}} \left( \frac{1}{2}\varepsilon vE_x^2 + \frac{1}{2}v b_x^2 \right)  \,\mathrm{d}x   + \int_{\mathbb{R}} vE_x^2 \,\mathrm{d}x     \nonumber \\[2mm]
& \leq  C \int_{\mathbb{R}} (\left| \phi_x \right| + \left| \psi_x \right|)(v E_x^2 + b_x^2) \,\mathrm{d}x         + C \int_{\mathbb{R}} ( \left| V_x \right|  +  \left| U_x \right|  )(v E_x^2 + b_x^2 ) \,\mathrm{d}x       \nonumber \\[2mm]
&  \quad      +   C \int_{\mathbb{R}} (\left| \psi_x b \right|   + \left| \psi b_x \right|   + \left| U_x b \right| ) \left| E_x \right|  \,\mathrm{d}x             +  \int_{\mathbb{R}}   \left| v U b_x E_x  \right|  \,\mathrm{d}x       \nonumber \\[2mm]
& \leq  C\| \left( \phi_x, \psi_x \right)  \|_{L^\infty} \| \left( \sqrt{v} E_x,  b_x  \right) \|^2       + C ( \delta^{\frac{1}{2}}  + \varepsilon_0 ) \| \left( \sqrt{v}  E_x, \psi_x, b_x \right)  \|^2         \nonumber \\[2mm]
&   \quad     + C \delta^{-\frac{1}{2}} \int_{\mathbb{R}} U^2_x b^2 \,\mathrm{d}x          + \frac{1}{2} \int_{\mathbb{R}} v E_x^2 \,\mathrm{d}x         + \frac{1}{2} \int_{\mathbb{R}} v U^2 b_x^2 \,\mathrm{d}x     \nonumber \\[2mm]
& \leq   C ( \delta^{\frac{1}{2}}  + \varepsilon_0 ) \| \left( \sqrt{v}  E_x, \psi_x, b_x \right)  \|^2     +  C \delta^{\frac{1}{2}}  (1+t)^{-2}         + \frac{1}{2} \| \sqrt{v} E_x \|^2          \nonumber \\[2mm]
& \quad     +  \frac{9}{8} \beta^2 \max \{ v_\pm\} \| b_x \|^2 ,     \label{Exbx4'}
\end{align}
where in the last inequality we have used
\begin{align*}
\delta^{-\frac{1}{2}} \int_{\mathbb{R}} U^2_x b^2 \,\mathrm{d}x
& \leq    C \delta^{-\frac{1}{2}} \| b \|^2_{L^{\infty}} \| U_x \|^2        \leq    C \delta^{-\frac{1}{2}} \| b \|^2_{L^{\infty}} \left( \| U^c_x \|^2     +   \| \left( U^r_\pm \right)_x  \|^2  \right)         \\[2mm]
& \leq    C \delta^{-\frac{1}{2}} \| b \|\cdot \| b_x \|    \left[  \delta^2 (1+t)^{-\frac{3}{2}}   +   \delta (1+t)^{-1}   \right]         \leq  C \delta^{\frac{1}{2}} \| b_x \|  (1+t)^{-1}      \\[3mm]
& \leq     C \delta^{\frac{1}{2}} \left[ \| b_x \|^2  +  (1+t)^{-2}   \right]  .
\end{align*}
Then by choosing  $ \varepsilon_0 $ and $ \delta $ suitably small, we can reach from \eqref{Exbx4'}
\begin{align} 
 &\quad \frac{\mathrm{d}}{\mathrm{d}t} \int_{\mathbb{R}} \left( \frac{1}{2}\varepsilon vE_x^2 + \frac{1}{2}v b_x^2 \right)  \,\mathrm{d}x       + \frac{1}{4}\int_{\mathbb{R}} vE_x^2 \,\mathrm{d}x    \nonumber \\[2mm]
 &  \leq  C ( \delta^{\frac{1}{2}}  + \varepsilon_0 )  \| \left( \psi_x, b_x \right)  \|^2        + C \delta^{\frac{1}{2}} (1+t)^{-2}      + \frac{9}{8} \beta^2 \max \left\{ v_\pm \right\} \| b_x \|^2 .   \label{lemshi1'}
\end{align}
Secondly, similar to the derivation of \eqref{gujiExbx6}, we have
\begin{small}
\begin{align}
& \quad -\frac{\mathrm{d}}{\mathrm{d}t}\int_{\mathbb{R}} \varepsilon v Eb_x \,\mathrm{d}x + \int_{\mathbb{R}} b_x^2 \,\mathrm{d}x     \nonumber \\[2mm]
& =  -\int_{\mathbb{R}} \varepsilon \psi_x E b_x \,\mathrm{d}x       - \int_{\mathbb{R}} \varepsilon U_x E b_x \,\mathrm{d}x     +  \int_{\mathbb{R}} \varepsilon \frac{u}{v} E \phi_x b_x \,\mathrm{d}x             + \int_{\mathbb{R}} \varepsilon \frac{u}{v} E V_x b_x \,\mathrm{d}x        \nonumber \\[2mm]
& \quad   + \int_{\mathbb{R}} \varepsilon \frac{1}{v} \phi_x E E_x  \,\mathrm{d}x       +  \int_{\mathbb{R}}  \varepsilon \frac{1}{v} V_x E E_x \,\mathrm{d}x     + \int_{\mathbb{R}} \varepsilon E_x^2 \,\mathrm{d}x        + \int_{\mathbb{R}} (E + \psi b + U b )v b_x \,\mathrm{d}x     \nonumber \\[2mm]
& \leq   C \varepsilon \varepsilon_0 \int_{\mathbb{R}} \left( \left| \psi_x b_x  \right|   + \left| \phi_x b_x \right|   + \left| \phi_x E_x \right|  \right)  \,\mathrm{d}x         + C \varepsilon  \int_{\mathbb{R}} \left( \left| U_x E \right| + \left| V_x E \right| \right) \left( \left| b_x \right|  + \left| E_x \right|  \right)   \,\mathrm{d}x    \nonumber \\[2mm]
& \quad   + \varepsilon \int_{\mathbb{R}} E_x^2  \,\mathrm{d}x            + C \int_{\mathbb{R}} \left| (E + \psi b + U b )b_x \right|  \,\mathrm{d}x       \nonumber \\[2mm]
& \leq   C \varepsilon \varepsilon_0  \| (\phi_x, \psi_x, b_x, E_x) \|^2        + C \varepsilon \delta^{-\frac{1}{2}} \int_{\mathbb{R}} \left( \left| U_x E \right|^2  +  \left| V_x E \right|^2   \right)  \,\mathrm{d}x         \nonumber \\[2mm]
& \quad   + C \varepsilon \delta^{\frac{1}{2}}  \int_{\mathbb{R}}  \left( b_x^2  +  E_x^2  \right)  \,\mathrm{d}x          + \varepsilon \| E_x \|^2       +C \| E + \psi b + U b  \|^2        +\frac{1}{2} \| b_x \|^2,        \nonumber \\[2mm]
& \leq   C \varepsilon (\delta^{\frac{1}{2}}  +  \varepsilon_0) \| (\phi_x, \psi_x, b_x, E_x) \|^2         +C \varepsilon \delta^{\frac{3}{2}} \| E \omega \|^2        + C \varepsilon \delta^{\frac{1}{2}} (1+t)^{-2}          \nonumber \\[2mm]
& \quad  + \varepsilon \| E_x \|^2        + C \| E + \psi b + U b  \|^2        +\frac{1}{2} \| b_x \|^2,      \label{gujiExbx6'}
\end{align}
\end{small}
where in the last inequality of \eqref{gujiExbx6'} we have used
\begin{align*} 
& \quad  \varepsilon \delta^{-\frac{1}{2}} \int_{\mathbb{R}} \left( \left| U_x E \right|^2  +  \left| V_x E \right|^2   \right)  \,\mathrm{d}x       \\[2mm]
& \leq C \varepsilon \delta^{-\frac{1}{2}}  \left( \int_{\mathbb{R}}  \left( \Theta^c_x \right)^2  E^2  \,\mathrm{d}x           +  \int_{\mathbb{R}} \left( U^r_\pm  \right)_x^2 E^2  \,\mathrm{d}x  \right)        \\[2mm]
& \leq C \varepsilon \delta^{-\frac{1}{2}}  \left( \int_{\mathbb{R}} \delta^2 \omega^2 E^2  \,\mathrm{d}x          +  \| E \|^2_{L^\infty} \| \left( U^r_\pm \right)_x \|^2   \right)       \\[2mm]
& \leq   C \varepsilon \delta^{\frac{3}{2}} \| E \omega \|^2       + C \varepsilon  \delta^{-\frac{1}{2}} \| E \| \| E_x \| \cdot \delta (1+t)^{-1}       \\[3mm]
& \leq   C \varepsilon \delta^{\frac{3}{2}} \| E \omega \|^2    + C \varepsilon \delta^{\frac{1}{2}} \| E_x \|^2     + C \varepsilon \delta^{\frac{1}{2}} (1+t)^{-2} .
\end{align*}
Hence,
\begin{align} 
 -\frac{\mathrm{d}}{\mathrm{d}t}\int_{\mathbb{R}} \varepsilon v Eb_x \,\mathrm{d}x     + \frac{1}{2} \| b_x \|^2
  & \leq C \varepsilon (\delta^{\frac{1}{2}}  +  \varepsilon_0) \| (\phi_x, \psi_x, b_x, E_x) \|^2       + C \varepsilon \delta^{\frac{3}{2}}  \| E \omega \|^2        \nonumber \\[2mm]
  &  \quad     + C \varepsilon \delta^{\frac{1}{2}} (1+t)^{-2}        + C \| E + \psi b + U b  \|^2      + \varepsilon \| E_x \|^2 .    \label{lemshi2'}
\end{align}
Multiplying \eqref{lemshi2'} by $ \frac{5}{2} \beta^2 \max\{v_\pm\} $ and summing it to \eqref{lemshi1'}, together with $ v > \frac{1}{4} \min \left\{ v_\pm \right\} $ gives
\begin{small}
\begin{align}\label{lemshi3'}
 &\quad \frac{\mathrm{d}}{\mathrm{d}t}  \int_{\mathbb{R}}  \left( \frac{1}{2}\varepsilon vE_x^2    + \frac{1}{2}v b_x^2      - \frac{5}{2} \varepsilon \beta^2  \max\{v_\pm\} v Eb_x   \right)  \,\mathrm{d}x        \nonumber \\[2mm]
 &\quad    + \left( \frac{1}{16} \min\{v_\pm\}   -  \frac{5}{2} \varepsilon  \beta^2 \max\{v_\pm\} \right)  \| E_x \|^2       \nonumber \\[2mm]
 &\quad    +  \left( \frac{5}{4} \beta^2 \max\{v_\pm\}  - \frac{9}{8} \beta^2 \max\{v_\pm\} \right)  \| b_x \|^2       \nonumber \\[2mm]
 & \leq      C \delta^{\frac{1}{2}} (1+t)^{-2}        + C ( \delta^{\frac{1}{2}}  + \varepsilon_0 )\| \left( \psi_x, b_x \right)  \|^2        + C \varepsilon \beta^2 (\delta^{\frac{1}{2}} + \varepsilon_0 ) \| (\phi_x, \psi_x, b_x, E_x) \|^2             \nonumber \\[3mm]
 &\quad       + C \varepsilon \beta^2 \delta^{\frac{3}{2}} \| E \omega \|^2          + C \varepsilon \beta^2  \delta^{\frac{1}{2}} (1+t)^{-2}          + C \| E + \psi b + U b  \|^2 .
\end{align}
\end{small}
By taking $ \frac{1}{16} \min\{v_\pm\}   - \frac{5}{2} \varepsilon  \beta^2 \max\{v_\pm\} > \frac{1}{32} \min \left\{ v_\pm \right\} $ in \eqref{lemshi3'}, i.e. $ \varepsilon \beta^2 < \frac{\min\{v_\pm\}}{80 \max\{v_\pm\}} $, we can obtain
\begin{align}
 & \frac{\mathrm{d}}{\mathrm{d}t}  \int_{\mathbb{R}}  \left( \frac{1}{2}\varepsilon vE_x^2  + \frac{1}{2}v b_x^2      - \frac{5}{2} \varepsilon \beta^2  \max\{v_\pm\} v Eb_x   \right)  \,\mathrm{d}x              + \frac{1}{32} \min \left\{ v_\pm \right\} \| E_x \|^2                \nonumber \\[2mm]
 &   +  \frac{1}{8} \beta^2 \max\{v_\pm\} \| b_x \|^2       \leq   C \delta^{\frac{1}{2}} (1+t)^{-2}    + C ( \delta^{\frac{1}{2}} + \varepsilon_0 )  \| (\phi_x, \psi_x, b_x, E_x) \|^2          \nonumber \\[2mm]
 &\qquad\qquad\qquad\qquad\qquad\quad     + C  \delta^{\frac{3}{2}} \| E \omega \|^2      + C \| E + \psi b + U b  \|^2 .     \label{lemshi3''}
\end{align}
Integrating \eqref{lemshi3''} with respect to $t$, then choosing $ \varepsilon_0 $ and $ \delta $ suitably small leads to
\begin{align}
 & \quad \int_{\mathbb{R}} \left( \frac{1}{2}\varepsilon vE_x^2  + \frac{1}{2}v b_x^2   \right)  \,\mathrm{d}x        +  \frac{1}{64} \min\{v_\pm\}  \int_0^t \| E_x \|^2  \,\mathrm{d}\tau           \nonumber \\[2mm]
 &  +  \frac{1}{16} \beta^2 \max\{v_\pm\}  \int_0^t \| b_x \|^2 \,\mathrm{d}\tau    \leq  C \| (E_0, E_{0x}, b_{0x}) \|^2        + C \delta^{\frac{1}{2}}           \nonumber \\[2mm]
 &\qquad\qquad    + C ( \delta^{\frac{1}{2}}  + \varepsilon_0 )  \int_0^t \| (\phi_x, \psi_x) \|^2 \,\mathrm{d}\tau         + C  \delta^{\frac{2}{3}} \int_0^t  \| E \omega \|^2 \,\mathrm{d}\tau           \nonumber \\[2mm]
 &\qquad\qquad     + C \int_0^t \| E + \psi b + \bar{u} b  \|^2   \,\mathrm{d}\tau         + \frac{5}{2} \varepsilon \beta^2  \max\{v_\pm\} \int_{\mathbb{R}} \left| v Eb_x \right|   \,\mathrm{d}x .       \label{lemshi4'}
\end{align}
By using the Cauchy inequality to the last term of \eqref{lemshi4'} and recalling that $ \varepsilon \beta^2 < \frac{\min\{v_\pm\}}{80 \max\{v_\pm\}} $, we can obtain
\begin{equation}\label{danjifen2'}
\frac{5}{2} \varepsilon \beta^2  \max\{v_\pm\} \int_{\mathbb{R}} \left| v Eb_x \right|   \,\mathrm{d}x
\leq \frac{1}{4} \int_{\mathbb{R}} vb_x^2  \,\mathrm{d}x     + C \beta^2 \int_{\mathbb{R}} \varepsilon E^2 \,\mathrm{d}x.
\end{equation}
Plugging \eqref{danjifen2'} into \eqref{lemshi4'}, then employing \eqref{daiquanE}, \eqref{jibennengliang'} and \eqref{daiquanguji'},
together with taking $ \varepsilon_0 $ and $ \delta $ small enough, we can conclude \eqref{exbx'}.

Recalling $ \beta :=  \max \left\{ \left| u_\pm \right|  \right\} > 0 $, from the discussion about $ \varepsilon \beta $ in \eqref{vepbeta1} and $ \varepsilon \beta ^2 $ in this section, we can determine the constant $ \bar{C} $ that
\begin{small}
\begin{align} \label{vepu-2'}
  \varepsilon
  & < \min \left\{ \frac{1}{ 16 \beta^2 }, \;\; \frac{\min\{v_\pm\}}{80 \max\{v_\pm\} \beta^2 }, \;\; \frac{\left( \max \left\{   \sqrt{\gamma R \theta_+ v_+^{-2} },\,  \sqrt{\gamma R \theta_- v_-^{-2} } \right\} \right)^{-1} }{ 32 \max \left\{ v_\pm \right\}  \beta }  \right\}     \nonumber \\[3mm]
  & =   \min \left\{ \frac{\min\{v_\pm\}}{ 80 \max\{v_\pm\}  \cdot  \left( \max \left\{ \left| u_\pm \right|  \right\} \right)^2 } , \;\;        \frac{   \left( \sqrt{\gamma R} \max \left\{   \sqrt{\theta_+} v_+ ^{-1},\, \sqrt{\theta_-} v_- ^{-1}   \right\} \right)^{-1}  }{   32 \max \left\{ v_\pm \right\} \cdot  \max \left\{ \left| u_\pm \right| \right\} }  \right\}       \nonumber \\[3mm]
  & =: \bar{C} .
\end{align}
\end{small}

This completes the proof of Lemma \ref{exbxyijiedao'}.
\end{proof}

Due to Theorem \ref{thm1} and Remark \ref{remark2}, it is not hard to check that the other estimates for a single viscous contact wave still hold for the combination of viscous contact wave with rarefaction waves.
Hence, we finish the proof of Proposition \ref{prop1} for composite wave of the pattern of $ R_1CR_3 $, and so the proof of Theorem \ref{thm2} is completed.


\section*{Acknowledgments}
The research was supported by the National Natural Science Foundation of China $\#$11771150, 11831003, 11926346 and Guangdong Basic and Applied Basic Research Foundation $\#$2020B1515310015. 
The authors are grateful to the anonymous referees for the valuable comments and the helpful suggestions on the manuscript.

\vspace{4mm}

\end{document}